\newtheorem{thm}{Theorem}[section]
\newtheorem{prop}[thm]{Proposition}
\newtheorem{defn}[thm]{Definition}
\newtheorem{lem}[thm]{Lemma}
\newtheorem{lemma}[thm]{Lemma}
\newtheorem{remark}[thm]{Remark}
\newtheorem{question}[thm]{Question}
\newtheorem{fact}[thm]{Fact}
\newtheorem{claim}[thm]{Claim}
\newtheorem{coro}[thm]{Corollary}
\newtheorem{example}[thm]{Example}
\newcommand{\NN}{\mathbb{N}}
\newcommand{\ZZ}{\mathbb{Z}}
\newcommand{\RR}{\mathbb{R}}
\newcommand{\Ss}[1]{\mathbb{S}_{#1}}
\providecommand{\Th}{\operatorname{Th}}
\newcommand{\TCcirc}{T_{\mathcal{L}_{G,C}}(\Ss{L})}
\newcommand{\Tcirc}{T_{\mathcal{L}_{G}}(\Ss{L})}
\newcommand{\LL}{\mathcal{L}}
\DeclareMathOperator{\Ind}{Ind}
\DeclareMathOperator{\dpr}{dpr}
\DeclareMathOperator{\bdn}{bdn}
\DeclareMathOperator{\tp}{tp}
\DeclareMathOperator{\qftp}{qftp}
\DeclareMathOperator{\cl}{cl}
\DeclareMathOperator*\lowlim{\underline{lim}}
\begin{document}
\title{The model theory of geometric random graphs}
\author{Omer Ben-Neria}
\address{Omer Ben-Neria, Einstein Institute of Mathematics, The Hebrew University of Jerusalem, Edmond
J. Safra Campus, Givat Ram, Jerusalem 91904, Israel}
\email{omer.bn@mail.huji.ac.il}
\author{Itay Kaplan}
\address{Itay Kaplan, Einstein Institute of Mathematics, The Hebrew University of Jerusalem, Edmond
J. Safra Campus, Givat Ram, Jerusalem 91904, Israel}
\email{kaplan@math.huji.ac.il}
\author{Tingxiang Zou}
\address{Tingxiang Zou, Institut für Mathematische Logik und 
Grundlagenforschung, Fachbereich Mathematik und Informatik, Universität Münster, Einsteinstrasse 62, 48149 Münster, Germany}
\email{tzou@uni-muenster.de}
\thanks{Ben-Neria would like to thank the Israel Science Foundation (ISF) for their support of this research (grant no. 1832/19). Kaplan would like to thank the Israel Science Foundation (ISF) for their support of this research (grants no. 1254/18 and 804/22). Zou is partially supported by DFG EXC 2044–390685587 and ANRDFG AAPG2019 (Geomod).}
\date{}
\begin{abstract}
We study the logical properties of infinite geometric random graphs, introduced by Bonato and Janssen. These are graphs whose vertex set is a dense ``generic'' subset of a metric space, where two vertices are adjacent with probability $p>0$ provided the distance between them is bounded by some constant number. We prove that for a large class of metric spaces, including circles, spheres and the complete Urysohn space, almost all geometric random graphs on a given space are elementary equivalent.  Moreover, their first-order theory can reveal geometric properties of the underlying metric space.
\end{abstract}

\subjclass[2020]{05C80, 03C64, 03C52, 03C45} 

\maketitle

\section{Introduction}
Random geometric graphs (RGG) have been developed for modelling real-life networks such as social networks (see \cite{RGG}). These are finite graphs constructed by randomly placing nodes in some metric space, and connecting two nodes whenever the distance between them is within a certain range. An infinite version of this notion was introduced by Bonato and Janssen in \cite{BJ} as \emph{geometric random graphs}.
Given a metric space $(X,d)$ and some chosen parameter $\delta>0$. A geometric random graph on a countable subset $V\subseteq X$ is constructed as follows: two vertices $x,y\in V$ are adjacent independently with probability $p>0$ provided $d(x,y)<\delta$. 
The classical infinite random graph, or the Rado graph, can be seen as a geometric random graph on a metric space whose diameter is smaller than $\delta$. By re-scaling the metric, we can always take $\delta=1$. Properties of geometric random graphs vary depending on the underlying metric space. One of the properties studied in \cite{BJ,AS,BJQ} is the \emph{geometric Rado property}. A metric space $(X,d)$ is called \emph{geometrically Rado} on a countable subset $V\subseteq X$ if two samplings of geometric random graphs on $V$ are almost surely isomorphic. It is shown in \cite[Theorem 3.7]{BJ} that $(\mathbb{R}^d,\ell_{\infty})$ is geometrically Rado on any dense set $V$ such that no two distinct points in $V$ has integer distance. Similarly, Angel and Spinka \cite{AS} showed that any circle $\mathbb{S}_L$ of length $L\geq 2$ is geometrically Rado on almost all countable dense subsets, provided $L$ is rational. On the other hand, $\mathbb{R}^2$ with the Euclidean metric is not geometrically Rado on any countable set, in fact any two samplings of geometric random graphs are almost surely non-isomorphic \cite[Theorem 4.1]{BJ}. The same phenomenon happens for $\mathbb{S}_L$ with $L>2$ irrational \cite[Theorem 1.3]{AS}.

The purpose of this work is to extend the study of geometric random graphs to the perspective of first-order logic. Our focus shifts from isomorphisms of graphs to their first-order theories. The language of graphs $\mathcal{L}_G$ consists of a single binary relation symbol $E$. Given a graph $G = (V_G,E_G)$, let $\Th(G)$ be the set of all sentences $\phi$ in the language $\mathcal{L}_G$, which are satisfied by $G$. Instead of considering the geometric Rado property, we ask when two samplings of geometric random graphs $G_1, G_2$ in a metric space are elementarily equivalent (i.e., $\Th(G_1)=\Th(G_2)$). We will show that in all the metric spaces considered above, and even in a much broader context, any two samplings of geometric random graphs on almost all dense subsets are almost surely elementarily equivalent. Thus, this first-order theory (rather than the isomorphism type of the graphs) is an invariant of the metric space in question, and it is natural to ask about the extent of which this invariant classifies the metric space. 

Recall that the Rado graph is characterized by an axiom scheme saying that unless there is a logical contradiction, any finite configuration can occur. The same holds here for separable metric space with no isolated points. Rather than arguing from probability, we will use the following property, defined in \cite{BJ}, which holds almost surely for any sampling of geometric random graphs on dense subsets \cite[Theorem 2.1]{BJ}.

\begin{defn}
Let $(X,d)$ be a separable metric space without isolated points and $S\subseteq X$ be a dense\footnote{
In an attempt to capture the geometric structure of $(X,d)$, we are interested in the case where $S$ is a dense subset.} subset.  A graph $G$ with vertex set $S$ is said to have \emph{unit threshold} if any two adjacent vertices $u,v\in S$ satisfy that $d(u,v)<1$. The graph $G$ is \emph{geometrically existentially closed (g.e.c.)} if $G$ has unit threshold\footnote{In \cite{BJ} and \cite{AS}, a g.e.c.\ graph is not required to have unit threshold; we pack these two terminologies in one for convenience.} and for any vertex $s$ and any disjoint finite finite sets $A,B\subseteq S$ which are contained in $B_1(s)$, the open unit ball abound $s$, and any $\epsilon>0$, there exists a vertex $v\in S\setminus (A\cup B)$ such that $d(s,v)<\epsilon$ and $v$ is adjacent to every vertex in $A$ but no vertex in $B$.
\end{defn}
\begin{remark}
    Note that the g.e.c.\ condition requires the underlying dense set having no isolated points. We will always assume in this paper that the metric space is separable without isolated points. 
\end{remark}

We will show that there is a large family of metric spaces, including 
all Riemmanian submanifolds definable in some o-minimal expansion of the real field (such as the ones described above), for which g.e.c.\ graphs on any dense independent\footnote{See \cref{def:defining structure and ind set}.} set have the same first-order theory. Moreover, the theory can recover some geometric properties of the metric space. 

In \cref{sec:circle}, we examine and expand the results of \cite{AS} for geometric random graphs on circles. We prove in \cref{thm:Elementary.Equiv.for.Circle.Graphs} that given a circle $\mathbb{S}_L$ of length $L>2$, all g.e.c.\ graphs on integer-distance free dense subsets of $\mathbb{S}_L$ are elementarily equivalent. We denote the common theory as $T_{\mathcal{L}_G}(\mathbb{S}_L)$. We proceed in \cref{prop:circularorder} and show that for all $L>3$, $T_{\mathcal{L}_G}(\mathbb{S}_L)$ can define the circular order restricted to the underlying dense set. The assumption that the vertex set is integer-distance free is then explained by \cref{cor:IntergerDef,rem:why integer-distance free}.

In \cref{sec:elentaryEquivalence} we investigate under which assumptions on the metric space and constrains on dense subsets, all g.e.c.\ graphs on these subsets have the same first-order theory. We develop a general framework --- \emph{defining structures} for metric spaces --- and give a sufficient condition for dense sets --- \emph{independence} --- so that we obtain elementary equivalence, see \cref{thm:elementary equivalence for dense independent sets}. This context contains the circle case. We justify the condition required for dense sets by showing in \cref{thm:almostsureindependent} that under reasonable assumptions on the probability measure, an infinite i.i.d.\ sampling of points on the metric space is almost surely a dense independent set.

\cref{sec:Volumn} discusses under which conditions the theory of g.e.c.\ graphs on a metric space can detect the volume of this space. In the case of circles $\mathbb{S}_L$, the theory $T_{\mathcal{L}_G}(\mathbb{S}_L)$ recovers the length $L$ for $L>2$. We prove a generalization of this result in \cref{thm-volume} for metric spaces whose family of open balls have finite VC-dimension, equipped with a measure which assign balls with the same radius the same measure.

In \cref{sec:Burden} we further investigate geometric properties of a metric space that can be recovered by the first-order theory of their g.e.c.\ graphs. We prove in \cref{thm:bound on burden} that if $(X,d)$ is a Riemmanian manifold of dimension $r$ definable in some o-minimal theory, then the theory of g.e.c.\ graphs on dense independent sets is NTP$_2$ with burden bounded by $r$.

In the last section, we study g.e.c.\ graphs on the complete Urysohn space. We prove in \cref{cor:UrysohnRado} that all g.e.c.\ graphs on integer-distance free dense subsets of this space are isomorphic, hence the Urysohn space is geometrically Rado.

\begin{remark} \label{rem:Alex Kruckman}
The question about elementary equivalence has already been investigated\footnote{We were not aware of this when we published the first version of this paper online.} in \cite{KruckmanThesis} and \cite{ackerman2017properly}. They showed that geometric random graphs are \emph{ergodic structures}. Indeed, when there is an atomless probability measure $\mu$ on $X$, and one chooses the vertex set by an i.i.d.\ sampling on $X$ and constructs geometric random graphs on it, then almost surely these graphs have the same first-order theory. In fact, they proved something stronger in this case: these graphs have the same theory in any countable fragment of the infinitary logic $L_{\omega_1,\omega}$. See Example 3.1 in \cite{ackerman2017properly} or Example 2.1.1 in \cite{KruckmanThesis}.

Although there is an overlap with our result, the proofs are different and they apply to different contexts. For example, we work with the g.e.c.\ condition rather than arguing from probability and in our context the resulting theory does not depend on the measure but only the metric space. See also \cref{rem:Alex2}. 
\end{remark}

\subsection{Acknowlegement}
We would like to thank Alex Kruckman for pointing out his work with Ackerman, Freer and Patel on ergodic structures, see \cref{rem:Alex Kruckman}. We would also like to thank Weikun He for helpful discussions around \cref{sec:Volumn} and Dugald Macpherson for suggesting us to look at the geometric Rado property of the Urysohn space, which resulted in \cref{sec:Urysohn}.

\section{Circles: case study}\label{sec:circle}

In this section, we will prove that all g.e.c.\ graphs on a generic dense set in $\Ss{L}$ are elementary equivalent. Here generic means having no pairs of points of integer distances, which is used in \cite{AS} and \cite{BJ}. We will further explore the definable sets in these geometric random graphs with the aim of understanding why we need this generic condition on dense sets to obtain the geometric Rado property or elementary equivalence.

Let $\Ss{L}$ be the circle of length $L$ where $L \in [2,\infty)$. We regard the circle as a quotient group $(\RR/ L\ZZ, +_L)$. For notational simplicity, for $x\in \RR$, let $[x] = x+L\ZZ$. The circle is also a metric space with the metric being $d_L(a,b) := \min\{|x-y|,L-|x-y|\}$ where $x,y \in [0,L)$ are the (unique) elements such that $[x] = a,[y] = b$. 

In \cite[Theorem 1.3]{AS}, it has been shown that any two g.e.c.\ graphs on circles $\mathbb{S}_L$ with $L>2$ irrational are almost surely non-isomorphic. However, when $L\geq 2$ is rational, all g.e.c.\ graphs on $\mathbb{S}_L$ are isomorphic provided the underlying vertex sets are generic dense subsets of $\mathbb{S}_L$, where generic means \emph{integer-distance free}, i.e., there is no $n\in \ZZ$ such that $d_L(u,v)=n \pmod{L}$ for some $u\neq v$ in the vertex set. As we will see in \cref{cor:IntergerDef}, having integer distances in the vertex set (and the integer distances) can be identified by the first-order theory of the graph, and so two such g.e.c.\ graphs may not be isomorphic or even elementarily equivalent (see \cref{rem:why integer-distance free})

In the following, we will show that all g.e.c\ graphs on a fixed circle $\mathbb{S}_L$ with $L\in (2,\infty]$ are elementary equivalent provided the underlying vertex sets are dense and integer-distance free in $\mathbb{S}_L$ (regardless of the rationality of $L$). Note that when $L=2$, any g.e.c.\ graph on a dense set of $\mathbb{S}_L$ is the Rado graph, of which the first-order theory is well-understood.

     Define the circular order relation $C(-,-,-)$ on $\Ss{L}$ by $C(a,b,c)$ if $x<y<z$ or $y<z<x$ or $z<x<y$ where $x,y,z\in [0,L)$ are such that $[x]=a, [y]=b$ and $[z]=c$. Note that it respects the group operation. For $a,b \in \Ss{L}$, let $(a,b):=\{c\in \Ss{L}:C(a,c,b)\}$ be the interval between $a$ and $b$.

\begin{lemma} \label{lem:circular order lemma}
    Let $a,b,c\in \Ss{L}$. Let $d_1,d_2 \in [0,L)$ be the unique elements such that $b=a+_L [d_1]$ and $c = a+_L [d_2]$. Let $e = a+_L[d_2 - d_1]$.
    
    Then $C(a,b,c)$ is equivalent to $0<d_1<d_2$ which is further equivalent to $C(a,e,c)$.
\end{lemma}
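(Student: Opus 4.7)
The plan is a short, direct verification using the translation-invariance of the circular order $C$ (noted in the paragraph before the lemma: ``it respects the group operation'') together with an inspection of the defining cases of $C$.

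First I would translate everything by $-a$. By translation-invariance, $C(a,b,c)$ is equivalent to $C([0], b-_L a, c-_L a) = C([0],[d_1],[d_2])$. Now the strict inequalities in the definition of $C$ force the three arguments to be distinct, and among the three cyclic possibilities $0<d_1<d_2$, $d_1<d_2<0$, $d_2<0<d_1$ for representatives in $[0,L)$, only the first is consistent with $d_1,d_2 \in [0,L)$ (the other two require a representative to be negative). This gives the first equivalence $C(a,b,c) \iff 0<d_1<d_2$.

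For the second equivalence I would apply the first equivalence to the triple $(a,e,c)$. By definition, $e = a +_L [d_2 - d_1]$ and $c = a +_L [d_2]$, so the relevant ``$d_1'$'' for this new triple is the representative of $d_2 - d_1$ in $[0,L)$ and the relevant ``$d_2'$'' is $d_2$. Here is the one place to be careful: one must check that the representative of $d_2-d_1$ in $[0,L)$ is actually $d_2-d_1$ itself. Assuming $0<d_1<d_2<L$, we have $0 < d_2 - d_1 < d_2 < L$, so indeed $d_2-d_1 \in [0,L)$ is the canonical representative. The first equivalence then yields $C(a,e,c) \iff 0 < d_2 - d_1 < d_2$, which simplifies exactly to $0 < d_1 < d_2$. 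Conversely, if $C(a,e,c)$ holds then by the same first equivalence applied to $(a,e,c)$, the representative of $d_2-d_1$ lies strictly between $0$ and $d_2$, which again forces $0<d_1<d_2$.

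No serious obstacle is expected: the only subtlety is keeping track of which representative in $[0,L)$ is meant when writing expressions like $[d_2-d_1]$, and verifying the case analysis for the three rotations in the definition of $C$. I would organise the write-up as a single paragraph doing the translation, a one-line case check to eliminate the two ``wrap-around'' cases, and then a symmetric application of the first equivalence to $(a,e,c)$ to obtain the second.
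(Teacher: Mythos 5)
Your approach is essentially the paper's: translate by $-a$ and use translation-invariance of $C$ to reduce the first equivalence to inspecting the definition of $C$ on $[0],[d_1],[d_2]$, then obtain the second equivalence by applying the first equivalence to the triple $(a,e,c)$ while keeping track of which element of $[0,L)$ represents $[d_2-d_1]$. The paper simply declares the first equivalence ``obvious'' while you spell it out, which is fine.

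The one place your proposal is incomplete is the final step of the converse direction. You correctly reduce to: the representative $f\in[0,L)$ of $[d_2-d_1]$ satisfies $0<f<d_2$, and then assert that this ``forces $0<d_1<d_2$.'' This is true but not immediate, because a priori $f$ could equal $d_2-d_1+L$ rather than $d_2-d_1$. One must rule out $d_2\le d_1$: if that held, then $f+d_1-d_2\ge f>0$ and $f+d_1-d_2<d_1<L$, so $f-(d_2-d_1)$ lies strictly in $(0,L)$ and hence is not in $L\ZZ$, contradicting $[f]=[d_2-d_1]$. This is exactly the short contradiction argument the paper supplies. You were appropriately careful about the representative-in-$[0,L)$ subtlety in the forward direction; the converse needs the same care, and without it the ``forces'' is an unjustified leap.
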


\begin{proof}
The first equivalence is obvious. For the second one, suppose $d_1 <d_2$. Then $d_2-d_1 \in (0,L)$ and $d_2-d_1<d_2$, and so $C(a,e,c)$ holds by the first equivalence. On the other hand, suppose that $C(a,e,c)$ holds. Let $f \in [0,L)$ be such that $a+_L[f] = e = a+_L[d_2-d_1]$, i.e., (*) $f-d_2+d_1 \in L\ZZ$. By the first equivalence, we know that $0<f<d_2$. If $d_2 \leq d_1$, then $f+d_1-d_2\geq f >0$. Therefore, $0< f+d_1-d_2<d_1<L$ contradicting (*). 
\end{proof}

Let $G=(V,E)$ be a graph, for a vertex $v\in V$ and $k\in\mathbb{N}$, define $N_k(v)$ to be the neighbourhood of $v$ of distance $k$, that is $y\in N_k(v)$ iff $$\exists x_0,x_1,\ldots,x_k\left(v=x_0\land y=x_k\land\bigwedge_{0\leq i<k}x_i \mathrel{E} x_{i+1}\right).$$

We will use the Ehrenfeucht–Fra\"{i}ss\'{e} game to prove elementary equivalence.
  \begin{defn}[Ehrenfeucht–Fra\"{i}ss\'{e} game]
     Let $\mathcal{L}$ be a finite relational language, i.e., $\mathcal{L}$ contains only relational symbols.
  Given two $\mathcal{L}$-structures $ M$ and $ N$ and $n\in\NN$, define the Ehrenfeucht–Fra\"{i}ss\'{e} game $G_n( M, N)$ to be an $n$-round game between two players, Spoiler and Duplicator, played as follows. At round $1\leq i\leq n$, Spoiler plays first by picking either an element $a_i\in  M$ or an $b_i\in N$. Then Duplicator plays according to the following rule. If Spoiler picked an element of $ M$, then Duplicator picks an element $b_i\in N$, otherwise, Duplicator picks an element $a_i\in  M$. Duplicator wins the game if the partial map $f$ mapping $a_i$ to $b_i$ preserves all the relations in $\mathcal{L}$, i.e., $R(a_{i_1},\ldots,a_{i_k})\Longleftrightarrow R(b_{i_1},\ldots,b_{i_k})$ for any $k$-array relation $R\in\mathcal{L}$ and any $1\leq i_1,\ldots,i_k\leq n$.   
  \end{defn}

\begin{fact} \label{fac:EF games}
 \cite[Theorem 2.4.6]{marker}: $ M\equiv_{\mathcal{L}} N$ iff Duplicator has a winning strategy in $G_n( M, N)$ for all $n\in\NN$.
\end{fact}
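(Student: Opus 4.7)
The plan is to follow the standard quantifier-rank proof. First I would define the quantifier rank $\mathrm{qr}(\varphi)$ of an $\mathcal{L}$-formula inductively: atomic formulas have rank $0$, Boolean connectives preserve the rank of their components, and a quantifier adds one. Then I would write $(M,\bar a) \equiv_n (N,\bar b)$ to mean that the two structures with their distinguished tuples satisfy exactly the same $\mathcal{L}$-formulas $\varphi(\bar x)$ of quantifier rank at most $n$. The central lemma is: for every $n \in \NN$ and every pair of tuples $\bar a \in M^k,\ \bar b \in N^k$, Duplicator has a winning strategy in the $n$-round game starting from $(M,\bar a)$ and $(N,\bar b)$ if and only if $(M,\bar a) \equiv_n (N,\bar b)$. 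Specialising to empty tuples and intersecting over $n$ will then yield the Fact, once one notes that every $\mathcal{L}$-sentence has finite quantifier rank.

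I would prove this key lemma by induction on $n$. The base case $n=0$ is immediate: preserving the atomic diagram between the tuples is the same as agreeing on all quantifier-free formulas. For the inductive step, suppose Duplicator has a winning $n$-round strategy and $M \vDash \exists x\,\varphi(x,\bar a)$ with $\mathrm{qr}(\varphi) \leq n-1$; letting Spoiler play a witness $a \in M$, the response $b \in N$ prescribed by the strategy satisfies $(M,\bar a\,a) \equiv_{n-1} (N,\bar b\,b)$ by induction, so $N \vDash \varphi(b,\bar b)$ and hence $N \vDash \exists x\,\varphi(x,\bar b)$; the $N$-to-$M$ direction and the Boolean cases are analogous. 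Conversely, assuming $(M,\bar a) \equiv_n (N,\bar b)$ and that Spoiler plays $a \in M$, Duplicator should choose $b \in N$ realising the same rank-$(n-1)$ type over $\bar b$ as $a$ does over $\bar a$. Such a $b$ exists: otherwise the rank-$n$ sentence asserting ``there is an element with this rank-$(n-1)$ type'' would hold in $M$ but not in $N$, contradicting $\equiv_n$.

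The one auxiliary ingredient is that ``the rank-$(n-1)$ type of $a$ over $\bar a$'' can be packaged into a single formula. This rests on the finiteness of $\mathcal{L}$: by induction on $n$, up to logical equivalence there are only finitely many $\mathcal{L}$-formulas of quantifier rank $\leq n$ in a fixed finite tuple of free variables, so such a type is a finite conjunction (of a formula and its negations). I would state this as a preliminary lemma.

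The main obstacle is the $\equiv_n \Rightarrow$ winning strategy direction: at each round Duplicator must produce a response preserving $(n-1)$-equivalence for every possible Spoiler move, and this is where the finite-relational hypothesis on $\mathcal{L}$ is used, through the packaging of types as single formulas. Once the key lemma is established, the Fact follows at once: $M \equiv_{\mathcal{L}} N$ is equivalent to $M \equiv_n N$ for all $n$, which by the lemma at empty tuples is equivalent to Duplicator winning $G_n(M,N)$ for all $n$.
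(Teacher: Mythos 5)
The paper does not prove this Fact; it is cited directly from Marker's textbook (Theorem 2.4.6). Your proposal reconstructs the standard quantifier-rank proof that appears in that and similar references, and it is correct: the key lemma equating winning the $n$-round game with $\equiv_n$-equivalence, the packaging of rank-$(n-1)$ types into single formulas via the finiteness of the language, and the final specialisation to empty tuples together give the statement. One small point worth sharpening if you were to write this up: the finiteness needed in the packaging lemma is really finiteness \emph{and} relationality of $\mathcal{L}$ (a finite language with function symbols still has infinitely many atomic formulas in a fixed tuple of free variables, since terms can be arbitrarily deep without raising quantifier rank); the paper's definition of the Ehrenfeucht--Fra\"{i}ss\'{e} game already builds in the finite relational hypothesis, so this matches, but your phrase ``rests on the finiteness of $\mathcal{L}$'' elides the relational half.
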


    Let $T$ be an $\mathcal{L}$-theory. We say that $T$ has \emph{quantifier elimination} if for all $\mathcal{L}$-formula $\phi(x)$ there is a quantifier-free $\mathcal{L}$-formula $\psi(x)$ such that $$T\vDash \forall x(\phi(x)\leftrightarrow \psi(x)).$$
    
    \begin{fact}\cite[Corollary 3.1.6]{marker}
    Let $T$ be a complete $\mathcal{L}$-theory. 
    Suppose that for all quantifier-free formula $\psi(x,z)$ where $|z|=1$, if $ M, N\vDash T$, $ A$ a common substructure of $ M$ and $ N$, $\bar{a}$ a tuple of length $|x|$ in $A$, and $b\in  M$ such that $ M\vDash\phi(\bar{a},b)$, then there is $c\in N$ with $ N\vDash\phi(\bar{a},c)$. Then $T$ has quantifier elimination.
     \end{fact}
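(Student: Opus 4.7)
The Fact is the standard quantifier elimination test (Marker, Corollary 3.1.6), so the plan is to sketch the usual proof, which proceeds in two stages. First I would reduce quantifier elimination to the following type-equivalence statement: for every formula $\phi(\bar{x})$ and every pair of models $M,N\models T$ with tuples $\bar{a}\in M$, $\bar{b}\in N$ such that $\operatorname{qftp}^{M}(\bar{a})=\operatorname{qftp}^{N}(\bar{b})$, one has $M\models\phi(\bar{a})$ if and only if $N\models\phi(\bar{b})$. Once this is known, let $\Gamma(\bar{x})$ consist of the quantifier-free formulas $\psi(\bar{x})$ with $T\models\phi(\bar{x})\to\psi(\bar{x})$; a short argument (if $T\cup\{\phi(\bar{x})\}\cup\operatorname{qftp}^{N}(\bar{b})$ were inconsistent for some model $N$ of $T\cup\Gamma(\bar{b})$, compactness would place the negation of some qf-conjunct into $\Gamma$, a contradiction) yields $T\cup\Gamma(\bar{x})\models\phi(\bar{x})$, and one more application of compactness collapses $\Gamma$ to a finite conjunction, the desired quantifier-free equivalent of $\phi$.

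For the type-equivalence, I would pass to $\kappa$-saturated elementary extensions $M^{\ast}\succeq M$ and $N^{\ast}\succeq N$ of the same cardinality $\kappa$, chosen larger than $|T|+|\bar{a}|+\aleph_{0}$. The quantifier-free isomorphism identifying $\bar{a}$ with $\bar{b}$ promotes $\langle\bar{a}\rangle$ and $\langle\bar{b}\rangle$ into a common substructure $A_{0}$ of $M^{\ast}$ and $N^{\ast}$. Then I would run a transfinite back-and-forth producing an isomorphism $\sigma\colon M^{\ast}\to N^{\ast}$ extending this identification. At a stage where $A_{\alpha}$ is the current common substructure and an element $e\in M^{\ast}$ is to be matched, the hypothesis guarantees that every quantifier-free $\psi(\bar{c},z)$ with $\bar{c}\in A_{\alpha}$ satisfied in $M^{\ast}$ by $(\bar{c},e)$ is satisfiable in $N^{\ast}$; hence $\operatorname{qftp}^{M^{\ast}}(e/A_{\alpha})$ is finitely satisfiable in $N^{\ast}$, and $\kappa$-saturation yields a realization $e'\in N^{\ast}$. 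Setting $\sigma(e)=e'$ preserves the common-substructure property, and the roles of $M^{\ast}$ and $N^{\ast}$ alternate in the usual way.

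Once $\sigma$ is built, it gives $M^{\ast}\models\phi(\bar{a})\iff N^{\ast}\models\phi(\bar{b})$ for every $\phi$, and elementarity of the extensions transfers the equivalence to $M$ and $N$, completing the verification. The main technical point to watch is that the hypothesis only provides one-variable extensions, so the back-and-forth must be carried out element-by-element and the saturation parameter $\kappa$ must dominate the sizes of all intermediate common substructures; fixing $\kappa$ appropriately large at the outset disposes of this concern. A second subtlety, minor but worth checking, is that at every stage $A_{\alpha}\cup\{e,e'\}$ really generates a common substructure in the relational language $\mathcal{L}$, which is automatic here because the qf-type match between $e$ and $e'$ over $A_{\alpha}$ encodes precisely the atomic diagram needed.
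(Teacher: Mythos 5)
Your two-stage strategy --- first establish that $\qftp^M(\bar{a}) = \qftp^N(\bar{b})$ forces $M \vDash \phi(\bar{a})$ iff $N \vDash \phi(\bar{b})$, then collapse the set $\Gamma(\bar{x})$ of quantifier-free consequences of $\phi$ to a finite conjunction by compactness --- is a valid route, and the compactness argument at the end is correct. Two small points there: the parenthetical only shows that $T\cup\{\phi(\bar{x})\}\cup\qftp^N(\bar{b})$ is consistent; to conclude $T\cup\Gamma\vDash\phi$ you still need to invoke the type-equivalence criterion on the resulting model and tuple, a step you leave implicit. Also, the identification of $\langle\bar{a}\rangle$ with $\langle\bar{b}\rangle$ as a common substructure should not rely on $\mathcal{L}$ being relational, since the Fact is stated for arbitrary $\mathcal{L}$; for languages with function symbols the generated substructures still match because the quantifier-free types agree, so this is cosmetic but should be said.

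The genuine gap is in the second stage. You demand $\kappa$-saturated elementary extensions $M^*, N^*$ of cardinality $\kappa$ and then run a transfinite back-and-forth to a full isomorphism $\sigma\colon M^* \to N^*$. Models that are $\kappa$-saturated of cardinality $\kappa$ are saturated models, and their existence for an arbitrary complete theory is not a theorem of ZFC (it requires cardinal arithmetic such as $\kappa^{<\kappa} = \kappa$). If one instead takes $M^*, N^*$ merely $\kappa^+$-saturated they will in general have cardinality exceeding $\kappa$, so the transfinite back-and-forth outruns the saturation before it exhausts the models; your remark that fixing $\kappa$ appropriately large at the outset disposes of this concern is exactly the step that fails. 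The repair is easy and you have all the pieces: take $M^*, N^*$ only $\omega$-saturated (these always exist), carry out the back-and-forth finitely many steps, and observe that your one-element extension argument gives Duplicator a winning strategy in every finite-round Ehrenfeucht--Fra\"{i}ss\'{e} game over the starting match $\bar{a} \mapsto \bar{b}$; this already yields $M^* \vDash \phi(\bar{a})$ iff $N^* \vDash \phi(\bar{b})$ for every first-order $\phi$, which is all the reduction requires. For the record, the paper cites this Fact from Marker and gives no proof; Marker's own argument is a direct induction on formula complexity that avoids saturation entirely.
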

     The following corollary is standard but we keep it for completeness. 
     \begin{coro}\label{cor:qf-criteria}
     Let $\mathcal{L}$ be a countable language and $T$ be a complete $\mathcal{L}$-theory. Suppose there are finitely universal models (by this we mean models which realize every finitary type without parameters) $ M$ and $ N$ such that for any $a_1,\ldots,a_n\in  M$ and $b_1,\ldots,b_n\in N$ with $\qftp(a_1,\ldots,a_n)=\qftp(b_1,\ldots,b_n)$ and any $a\in  M$, there exists $b\in N$ such that  $$\qftp(a_1,\ldots,a_n,a)=\qftp(b_1,\ldots,b_n,b).$$ Then $T$ has quantifier elimination. 
     \end{coro}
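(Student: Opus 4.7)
The plan is to reduce to the quantifier elimination criterion stated immediately above (the cited consequence of \cite[Corollary 3.1.6]{marker}). So I would fix arbitrary models $M',N' \models T$ with a common substructure $A$, a tuple $\bar{a}$ from $A$, and an element $b \in M'$ witnessing $M' \models \phi(\bar{a},b)$ for some quantifier-free $\phi(x,z)$ with $|z|=1$, and aim to produce $c \in N'$ with $N' \models \phi(\bar{a},c)$. The idea is to pull this local situation into the distinguished models $M$ and $N$ using finite universality, solve it there via the hypothesis of the corollary, and then push the answer back out to $N'$.

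Concretely, first I would realize the complete type $\tp_{M'}(\bar{a},b)$ in $M$ by some pair $(\bar{a}^*, b^*)$, and realize $\tp_{N'}(\bar{a})$ in $N$ by some tuple $\bar{a}^{**}$; both realizations exist by finite universality, which, as specified in the statement, realizes every complete finitary type of $T$. Since $A$ is a common substructure of $M'$ and $N'$, one has $\qftp_{M'}(\bar{a}) = \qftp_{N'}(\bar{a})$, and hence $\qftp_M(\bar{a}^*) = \qftp_N(\bar{a}^{**})$. The hypothesis then yields some $b^{**} \in N$ with $\qftp_N(\bar{a}^{**}, b^{**}) = \qftp_M(\bar{a}^*, b^*)$; in particular $N \models \phi(\bar{a}^{**}, b^{**})$, so the formula $\exists y\, \phi(\bar{x},y)$ belongs to $\tp_N(\bar{a}^{**})$. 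Finally, because $\tp_N(\bar{a}^{**}) = \tp_{N'}(\bar{a})$, the same existential formula belongs to $\tp_{N'}(\bar{a})$, so $N' \models \exists y\, \phi(\bar{a}, y)$ and any witness $c$ works.

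There is no serious obstacle; the delicate point is merely noticing that "finitely universal" is being invoked in the strong sense of realizing complete types, not just quantifier-free types. This is exactly what lets us transfer full types between $M'$ and $M$, and between $N$ and $N'$, in steps one, two, and five, while the single genuine back-and-forth step — passing from $M$ to $N$ — is supplied verbatim by the hypothesis of the corollary.
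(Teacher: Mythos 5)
Your proof is correct and follows essentially the same route as the paper's: realize the relevant complete types in the distinguished models $M$ and $N$ via finite universality, use the common-substructure assumption to match quantifier-free types, apply the hypothesis to transfer the witness from $M$ to $N$, and then push the existential statement back to $N'$ through equality of complete types. The only cosmetic difference is that you realize the $(n{+}1)$-type $\tp_{M'}(\bar{a},b)$ in $M$ directly, whereas the paper realizes only the $n$-type of $\bar{a}$ and then picks a witness inside $M$; both are covered by finite universality and lead to the same conclusion.
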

     \begin{proof}
     Let $\psi(x,z)$ be a quantifier-free $\mathcal{L}$-formula, with $|x|=n$ and $|z|=1$. Let $ M', N'\vDash T$ and $ A$ a common substructure. Suppose we have $\bar{a}\in A^n$ and $b\in M'$ such that $ M'\vDash \psi(\bar{a},b)$. Consider the types $p$ and $q$, the type of $\bar{a}$ in $ M'$ and $ N'$ respectively. By finite universality, there are $\bar{t}=(t_1,\ldots,t_n)\in  M^n$ realising $p$ and $\bar{s}=(s_1,\ldots,s_n)\in N^n$ realising $q$. Note that $\exists z\psi(\bar{t},z)\in p$, hence there is $b'\in M$ with $ M\vDash \psi(\bar{t},b')$. By assumption, there is $c'\in N$ such that $\qftp(\bar{t},b')=\qftp(\bar{s},c')$. Thus, $ N\vDash \psi(\bar{t},c')$ and hence $\exists z\psi(\bar{s},z)\in q$. Therefore, there is $c\in N'$ satisfying $ N'\vDash\psi(\bar{a},c)$ as required.
     \end{proof}

\begin{thm}\label{thm:Elementary.Equiv.for.Circle.Graphs}
Let $L\in (2,\infty]$ and $S,S'\subseteq \mathbb{S}_L$ be dense integer-distance free subsets. Suppose $G=(S,E)$ and $G'=(S',E')$ are g.e.c.\ graphs on $S$ and $S'$ respectively. Then $G\equiv_{\mathcal{L}_G} G'$, where $\mathcal{L}_G=\{E\}$ is the graph language. Denote the common theory by $\Tcirc$.

Moreover, let $\mathcal{L}_{G,C}$ be the graph language expanded by the ternary relations $\{C_{z,t,k} : z,t,k\in \ZZ\}$ where $C_{z,t,k}(a,b,c)$ holds iff $C(a+_L [z],b+_L [t],c+_L [k])$. Then $G\equiv_{\mathcal{L}_{G,C}} G'$ and the common theory $\TCcirc$ has quantifier elimination.
\end{thm}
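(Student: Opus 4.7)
The plan is to prove all three conclusions via a back-and-forth argument in the richer language $\mathcal{L}_{G,C}$. The heart of the argument is the following one-point extension lemma, proven for the specific g.e.c.\ graphs $G, G'$ on dense integer-distance-free subsets: given matching $\mathcal{L}_{G,C}$-qftp tuples $\bar{a} \in S^n$ and $\bar{b} \in (S')^n$ and any $a \in S$, we find $b \in S'$ with $\qftp_{\mathcal{L}_{G,C}}(\bar{a}a) = \qftp_{\mathcal{L}_{G,C}}(\bar{b}b)$ (and symmetrically). With this lemma, Fact~\ref{fac:EF games} yields $G \equiv_{\mathcal{L}_{G,C}} G'$ (whence $G \equiv_{\mathcal{L}_G} G'$ by reduct). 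For quantifier elimination, the extension lemma translates into a first-order axiom scheme satisfied by $\TCcirc$; lifting to $\omega$-saturated models $M, N \models \TCcirc$ (where every consistent finite extension type is realised), the hypothesis of Corollary~\ref{cor:qf-criteria} is met, and QE of $\TCcirc$ follows.

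To find $b$, I would decompose the target type $\qftp_{\mathcal{L}_{G,C}}(b/\bar{b})$ into a circular-order part $\Sigma_C(x)$ (the $C_{z,t,k}$-atoms and their negations) and an edge part $\Sigma_E(x)$. Each atom $C_{z,t,k}(b_i, b_j, x) \in \Sigma_C$ is, viewed as a condition on $x \in \mathbb{S}_L$, an open constraint whose truth changes only when $x$ coincides with an integer shift of $b_i$ or $b_j$. By Lemma~\ref{lem:circular order lemma}, $\Sigma_C$ jointly specifies an integer-shift arc $U \subseteq \mathbb{S}_L$ in the partition by $\{b_i + [m] : i \leq n,\, m \in \ZZ\}$. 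The crucial claim is non-emptiness of $U$: the $\mathcal{L}_{G,C}$-qftp of a finite integer-distance-free tuple encodes the combinatorial arrangement of its integer shifts on $\mathbb{S}_L$, so the arc in the $\bar{a}$-picture containing $a$ corresponds to the (non-empty) arc $U$ in the $\bar{b}$-picture.

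Next, choose a centre $p \in U$ with $d(p, b_i) \neq 1$ for every $i \leq n$; the excluded set is finite in $\mathbb{S}_L$, so $U$ minus this set is still open and non-empty. Set $A = \{b_i : a\, E\, a_i\}$ and $B = \{b_i \in B_1(p) \cap S' : \neg(a\, E\, a_i)\}$, and observe $A \subseteq B_1(p)$ because the condition ``$x \in B_1(b_i)$'' is itself the atom $C_{-1, 0, 1}(b_i, x, b_i)$, which belongs to $\Sigma_C$ whenever $b_i \in A$. Pick $\epsilon > 0$ small enough that $B_\epsilon(p) \subseteq U$ and that $d(b, b_i) > 1$ whenever $b \in B_\epsilon(p)$ and $d(p, b_i) > 1$. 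Applying the g.e.c.\ property at $p$ yields $b \in S' \cap B_\epsilon(p) \setminus (A \cup B)$ adjacent to all of $A$ and to none of $B$; then $b \in U$ gives $\Sigma_C$, adjacencies in $A \cup B$ give the relevant part of $\Sigma_E$, and the remaining non-adjacencies are forced by distance.

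The main obstacle will be the non-emptiness of $U$: showing that the $\mathcal{L}_{G,C}$-qftp of a finite tuple in a dense integer-distance-free subset of $\mathbb{S}_L$ fully captures the combinatorial arrangement of its integer shifts on the circle, so that the arc structure transfers from $\bar{a}$ to $\bar{b}$. This is also exactly where the integer-distance-free hypothesis is indispensable, excluding the degenerate boundary cases in which a point of the tuple could coincide with an integer shift of another and the $C_{z,t,k}$-atoms could fail to pin down an open-arc type.
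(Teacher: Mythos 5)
Your one-point extension lemma is stated for the full $\mathcal{L}_{G,C}$-quantifier-free type, over the infinite family $\{C_{z,t,k}:z,t,k\in\ZZ\}$, and this is exactly where the argument breaks for irrational $L$. When $L$ is irrational, the integer shifts $\{[z]:z\in\ZZ\}$ are dense in $\Ss{L}$, so the set $\{b_i+_L[m]:i\leq n,\ m\in\ZZ\}$ is dense and the ``partition into integer-shift arcs'' that you invoke consists of singletons rather than open arcs. In fact $\qftp_{\mathcal{L}_{G,C}}(a_ia_j)$ records, for a dense set of shift points, on which side $a_j$ lies, and hence pins down the displacement $a_j-_La_i$ as a real number. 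Consequently $\qftp_{\mathcal{L}_{G,C}}(\bar a)=\qftp_{\mathcal{L}_{G,C}}(\bar b)$ forces $\bar b$ to be an exact rotation of $\bar a$, and the only candidate $b$ with $\qftp_{\mathcal{L}_{G,C}}(\bar b b)=\qftp_{\mathcal{L}_{G,C}}(\bar a a)$ is the image of $a$ under that rotation --- a single point that $S'$ has no reason to contain. So the non-emptiness of $U$ that you flag as ``the main obstacle'' is not merely a gap; the claim is false. (A secondary point: $\mathcal{L}_{G,C}$ is an infinite language, so \cref{fac:EF games} cannot be applied to it directly; one must pass to finite reducts first.)

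The paper sidesteps this by never asking to match the full qftp at once. It defines an ``$n$-elementary map'' that matches only the finitely many relations $C_{z,t,k}$ with $z,t,k\in[-2^n,2^n]$, and the key claim extends such a map to an $(n-1)$-elementary map --- the shift budget decays by one step at each extension, because eliminating the occurrences of the new point from $C(a+_L[z],a+_L[t],y+_L[k])$ (via \cref{lem:circular order lemma}) roughly doubles the magnitude of the shifts one must control. Since every finite sublanguage $\mathcal{L}_0\subseteq\mathcal{L}_{G,C}$ uses only bounded shifts, Duplicator can start from a large enough budget and survive any fixed number $n$ of rounds; $\omega$-saturation then upgrades this to the extension property for full qftps in saturated models, giving QE via \cref{cor:qf-criteria}. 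To repair your proposal you would need to make the same move: fix a bound on the shifts from the outset, phrase the extension lemma with a decaying bound, and postpone matching the unbounded type to saturated models.
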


\begin{remark}
    If $L \leq 2$ the theorem is still true, but it is less interesting since in that case $\Tcirc$ is the theory of the random graph.
\end{remark}

\begin{proof}


Let $f:A\to B$ be a bijection with $A\subseteq S$ and $B\subseteq S'$. We call $f$ an \textbf{$n$-elementary map}, if the following holds:
\begin{itemize}
\item
$f$ is a graph isomorphism between $G\upharpoonright A$ and $G'\upharpoonright B$ .
\item
for all $a,b,c\in A$ and $z,t,k\in[-2^n,2^n]\cap\mathbb{Z}$, $C(a+_L [z],b+_L [t],c+_L [k])$ iff $C(f(a)+_L [z],f(b)+_L [t],f(c)+_L [k])$. 
\end{itemize}

\begin{claim}\label{cla:n-elemnetary map}
Suppose $f:A\to B$ is an $n$-elementary map with $n>0$, and $A,B$ are finite non-empty. Then for any $a\in G$, we can find $b'\in G'$ such that $f\cup\{(a,b')\}$ is an $(n-1)$-elementary map.
\end{claim}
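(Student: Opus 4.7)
The plan is to find $b' \in S'$ in a distinguished open arc on $\Ss{L}$ corresponding to $a$'s combinatorial position relative to the integer translates of $A$, and to control the adjacencies to $B$ via the g.e.c.\ property of $G'$. First I would define the arrangements $P_A := \{x +_L [k] : x \in A,\ k \in [-2^n, 2^n] \cap \ZZ\}$ and $P_B := \{f(x) +_L [k] : x \in A,\ k \in [-2^n, 2^n] \cap \ZZ\}$; each is a finite subset of $\Ss{L}$ partitioning the circle into open ``cells''. The integer-distance free hypothesis ensures that $a$ lies in the interior of some cell $I_A$. Since $f$ is $n$-elementary, the bijection $x +_L [k] \mapsto f(x) +_L [k]$ preserves the circular order on $P_A$, inducing a cell-bijection; let $I_B$ denote the cell matched with $I_A$.

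The main verification will be that any $b' \in I_B \cap S'$ with the correct adjacencies makes $f \cup \{(a, b')\}$ an $(n-1)$-elementary map. To check the required circular-order preservation for triples in $A \cup \{a\}$ with shifts $w_1, w_2, w_3 \in [-2^{n-1}, 2^{n-1}]$, I would cyclically rotate each such triple and translate by $-w_1$, splitting by how many arguments are the new element. Triples with zero new elements follow from $n$-elementary since the translated shifts $w_j - w_1$ remain in $[-2^n, 2^n]$; triples with three new elements reduce via \cref{lem:circular order lemma} to a condition on the shifts alone, identical on the two sides. The one-new-element case yields $C(a, x_2 +_L[w_2 - w_1], x_3 +_L[w_3 - w_1])$, an arc condition on $a$ whose two boundary points lie in $P_A$, so it depends only on the cell of $a$ and transfers to $b' \in I_B$. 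The two-new-element case, after cyclic rotation to place a new element first and translating, yields $C(a, a +_L [m], x +_L[m'])$; by \cref{lem:circular order lemma} this defines an arc on $a$ with boundaries $x +_L[m']$ and $x +_L[m' - m]$, and the crucial algebraic observation is that in every position-assignment $m' - m$ reduces to a difference of two original shifts (e.g.\ $m = w_2 - w_1$ and $m' = w_3 - w_1$ give $m' - m = w_3 - w_2$), so $|m' - m| \leq 2^n$ and both boundary points lie in $P_A$. The arc is thus a union of $P_A$-cells, and the condition is again determined by the cell of $a$, transferring to $b' \in I_B$ via the cell correspondence.

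For the adjacencies, let $N_+ := \{x \in A : a\, E\, x\}$. Since $\pm 1 \in [-2^n, 2^n]$, each open ball $B_1(x) = (x +_L[-1], x +_L[1])$ has boundary in $P_A$, so every $P_A$-cell is either contained in $B_1(x)$ or disjoint from it. From $a \in I_A \cap B_1(x)$ for $x \in N_+$ one gets $I_A \subseteq B_1(x)$, which transfers to $I_B \subseteq B_1(f(x))$; and $I_B$ is strictly separated from $\overline{B_1(f(x))}$ whenever $d_L(a, x) \geq 1$. I would then pick $s \in I_B \cap S'$ by density, so that $f(N_+) \subseteq B_1(s)$, and invoke the g.e.c.\ property at $s$ with positive set $f(N_+)$ and negative set $\{f(x) : x \in A \setminus N_+\} \cap B_1(s)$; choosing $\epsilon > 0$ small enough that $B_\epsilon(s) \subseteq I_B$ and $B_\epsilon(s) \cap \overline{B_1(f(x))} = \emptyset$ for every $x$ with $d_L(a, x) \geq 1$, the resulting vertex $b' \in S' \setminus B$ is the desired extension. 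The main delicacy of the argument is the algebraic identity in the two-new-element case: without it, the defining arc could have a boundary outside $P_A$, requiring control over shifts in $[-2^{n+1}, 2^{n+1}]$ that $n$-elementary does not provide.
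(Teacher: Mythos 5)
Your proposal is correct and follows essentially the same approach as the paper's proof: both define the partition of the circle by the set $A_n := \{a' +_L [z] : a' \in A,\ z \in [-2^n, 2^n] \cap \ZZ\}$ (your $P_A$), locate the ``cell'' of $a$, choose $b'$ in the corresponding cell of $B_n$ via density and g.e.c., and then verify the $(n-1)$-elementary property by a case analysis on how many coordinates of a circular triple involve the new element, invoking \cref{lem:circular order lemma} in the two-occurrence case to reduce the shift bound from $2^{n+1}$ to $2^n$. Your write-up is, if anything, somewhat more explicit than the paper's about the cell-bijection and about the choice of $\epsilon$ needed to combine g.e.c.\ with the unit-threshold condition.
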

\begin{proof}
Given $a\in G$, we want to find the corresponding $b'\in G'$. We may assume $a\not\in A$. As $A$ is finite, so is $A_n:=\{a'+_L [z]:z\in[-2^n,2^n]\cap\mathbb{Z},a'\in A\}$. Since $S$ is integer-distance free, $x+_L [z]\neq y+_L [z']$ for any $x\neq y\in S$ and $z,z'\in\mathbb{Z}$. In particular $a\not\in A_n$. Therefore, we can find $a_1,a_2\in A$
and $z_1,z_2 \in[-2^n,2^n]\cap\mathbb{Z}$ such that $C(a_1+_L [z_1],a,a_2+_L [z_2])$ and $A_n\cap (a_1+_L [z_1],a_2+_L [z_2])=\emptyset$. Let $A'\subseteq A$ be the set of vertices in $A$ that are adjacent to $a$. Note that $A'\subseteq (a-_L [1],a+_L [1])$. By the fact that $(a_1+_L [z_1],a_2+_L [z_2])$ is the smallest interval in $A_n$ that contains $a$, for any $b\in A'\cap(a-_L [1],a)$ it must be that $(b,b+_L [1])\supseteq (a_1+_L [z_1], a_2+_L [z_2])$. Similarly, for any $c\in A'\cap(a,a+_L[1])$, we have $(c-_L [1],c)\supseteq (a_1+_L [z_1], a_2+_L [z_2])$. 
Let $B'=f(A')$, $b_1=f(a_1)$ and $b_2=f(a_2)$. Let $B_n:=\{b+_L [z]:z\in[-2^n,2^n]\cap\mathbb{Z},b\in B\}$.
Since $f$ is an $n$-elementary map, we know that $(b_1+_L [z_1],b_2+_L [z_2])\cap B_n=\emptyset$ and for any $d\in B'$, either $(d,d+_L [1])\supseteq (b_1+_L [z_1], b_2+_L [z_2])$ or $(d-_L [1],d)\supseteq (b_1+_L [z_1], b_2+_L [z_2])$.
Therefore, for any $e\in (b_1+_L [z_1],b_2+_L [z_2])$, we have $B'\subseteq (e-_L [1],e+_L [1])$. Since $G'$ is g.e.c., by density we may pick $b'\in G'$ with $b'\in (b_1+_L [z_1],b_2+_L [z_2])$ and $b'$ is adjacent to everything in $B'$ but nothing in $B\setminus B'$. Clearly $f':=f\cup \{(a,b')\}$ is a graph isomorphism between $A\cup\{a\}$ and $B\cup\{b'\}$. Now we verify the second condition of $f'$ being an $(n-1)$-elementary map. There are three cases to check, depending on the number of occurrence of $a$ in $C(x+_L[z],u+_L[t],v+_L[k])$. First note that $C(a+_L[z],a+_L[t],a+_L[k])\Longleftrightarrow C(x+_L[z],x+_L[t],x+_L[k])$ for any $x\in \mathbb{S}_L$ and $z,t,k\in\mathbb{Z}$. And after rearranging variables in the circular order, we only need to consider the following two cases: $C(x+_L [z],a+_L [t],y+_L [k])$ and $C(a+_L [z],a+_L [t],y+_L [k])$ for $x,y\in A$.

Note that for all $x,y\in A$ and $z,t,k\in [-2^{n-1},2^{n-1}]\cap \mathbb{Z}$, the statement $$C(x+_L [z],a+_L [t],y+_L [k])\Longleftrightarrow C(f(x)+_L [z],b'+_L [t],f(y)+_L [k])$$ is equivalent to $$(*)~~C(x+_L [z-t],a,y+_L [k-t])\Longleftrightarrow C(f(x)+_L [z-t],b',f(y)+_L [k-t]).$$  As both $z-t$ and $k-t$ take value in $[-2^n,2^n]\cap\mathbb{Z}$, by our choice of $b'$, we can see that $(*)$ is true for all $x,y\in A$. We only need to show $$C(a+_L [z],a+_L [t],y+_L [k])\Longleftrightarrow C(b'+_L [z],b'+_L [t],f(y)+_L [k])$$  for all $y\in A$ and $z,t,k\in [-2^{n-1},2^{n-1}]\cap\mathbb{Z}$.

Let $d_1, d_2 \in [0,L)$ be such that $a+_L [d_1] = a+_L [t-z]$ and $a+_L [d_2] = y+_L [k-z]$. Then, we get that $C(a+_L [z],a+_L [t], y+_L [k])$ holds iff $C(a,a+_L [t-z],y+_L [k-z])$ which in turn holds iff $C(a, a+_L[d_2-d_1], a+_L[d_2])$ by \cref{lem:circular order lemma}. Since $a+_L[d_2 - d_1] = y+_L[k-t]$, we get $$C(a+_L [z],a+_L [t], y+_L [k])\Longleftrightarrow C(a,y+_L [k-t],y+_L [k-z]).$$ 
Similarly,  $C(b'+_L [z],b'+_L [t],f(y)+_L [k])\Longleftrightarrow C(b',f(y)+_L [k-t],f(y)+_L [k-z])$. 


By the fact that $k-z,k-t\in [-2^n,2^n]\cap\mathbb{Z}$ and our choice of $b'$, we do have $$C(a,y+_L [k-t],y+_L [k-z])\Longleftrightarrow C(b',f(y)+_L [k-t],f(y)+_L [k-z]).$$ Thus $C(a+_L [z],a+_L [t], y+_L [k])\Longleftrightarrow C(b'+_L [z],b'+_L [t], f(y)+_L [k])$ as desired.
\end{proof}
Now we can finish the proof of the first part of this theorem. In fact we will show that $G\equiv_{\mathcal{L}_{G,C}} G'$. It is enough to show that $G\equiv_{\mathcal{L}_{0}} G'$ for any finite $\LL_0 \subseteq \LL_{G,C}$, so let $\LL_0 = \LL_G \cup \{C_{z,t,k} : z,t,k \in [-2^m, 2^m] \}$ for some naturatl number $m$. We use \cref{fac:EF games}, so let $n$ be a fixed natural number and play the $n$-round Ehrenfeucht-Fra\"{i}ss\'{e} game on $G$ and $G'$ in $\LL_0$. We claim that Duplicator has a winning strategy. Namely, Duplicator can make sure that at any round $k\leq n$ of the game, the two sequences chosen from $S$ and $S'$ form an $(m+n-k)$-elementary map $f_k:A \to B$ where $A \subseteq S$ and $B \subseteq S'$ and $|A|=|B|=k$. Note that for any $a\in G$ and $a'\in G'$, as $S$ and $S'$ are on circles of the same length the map $a \mapsto a'$ is an $m+n-1$-elementary map. Thus, on the first round, Duplicator can choose any point. This strategy is possible to maintain thanks to \cref{cla:n-elemnetary map}. 
After $n$ rounds, the map $f_n$ is an $m$-elementary map. In particular, $f_n$ is a graph isomorphism preserving the relations in $\LL_0$, hence Duplicator wins. Hence, we have that $G \equiv_{\LL_{G,C}} G'$, and let $\TCcirc$ be the common complete theory.

We turn to quantifier elimination of the complete theory $\TCcirc$. Note that \cref{cla:n-elemnetary map} is expressible in first-order (without quantifiers), namely, for any $n,m \in \NN$, let $$\phi_{n,m}(x_0,\ldots,x_{m-1},y_0,\ldots, y_{m-1})$$ be the conjunction of $(x_i \mathrel{E} x_j) \leftrightarrow (y_i\mathrel{E} y_j)$ and $C_{z,t,k}(x_i,x_j,x_l) \leftrightarrow C_{z,t,k}(y_i,y_j,y_l)$ for any $z,t,k \in [-2^n, 2^n]$ and any $i,j,l<m$. Then \cref{cla:n-elemnetary map} implies that for $n>0$ and any $m$, \[\TCcirc \vDash \forall \bar{x}\bar{y} \forall x \exists y \left(\phi_{n,m}(\bar{x},\bar{y}) \to \phi_{n-1,m}(\bar{x}x,\bar{y}y)\right).\] This means that if $M$ is $\omega$-saturated, namely any consistent type over finitely many parameters has a realization, $\bar{a}, \bar{b} \in M^m$ and $\qftp(\bar{a}) = \qftp({b})$ then for any $c \in M$ there is $d \in M$ such that $\qftp(\bar{a}c) = \qftp(\bar{b}d)$. Any $\omega$-saturated model is also finitely universal hence by \cref{cor:qf-criteria} (applied with $M,N$ being the same $\omega$-saturated model), we are done.
\end{proof}


Note that when $L>2$ is a rational number, then up to logical equivalence in the theory $\TCcirc$, the set of formulas $\{C_{z,t,k}:z,t,k\in \ZZ\}$ is finite. By quantifier elimination, $T_{\mathcal{L}_{G,C}}(\Ss{L})$ has only finitely-many $n$-types for any natural number $n$. Thus, by the Ryll-Nardzewski theorem (see \cite[Theorem 7.3.1]{Hodges}), $\Tcirc$ is $\omega$-categorical, i.e., any two countable model of $\Tcirc$ are isomorphic. In particular, we recover the following result in \cite[Theorem 1.3(2)]{AS}.

\begin{coro} \label{cor:omega-cat}
Any two g.e.c.\ graphs on integer-distance free countable dense sets of $\mathbb{S}_L$ are isomorphic for $L\in\mathbb{Q}\cap(2,\infty)$.
\end{coro}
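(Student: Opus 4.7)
The plan is to follow the roadmap sketched in the paragraph just before the corollary: reduce the claim to $\omega$-categoricity of $\Tcirc$ via Ryll-Nardzewski, using the quantifier elimination established in \cref{thm:Elementary.Equiv.for.Circle.Graphs}.

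First I would verify the claim that, for $L \in \mathbb{Q} \cap (2,\infty)$, the family $\{C_{z,t,k} : z,t,k \in \ZZ\}$ is finite up to logical equivalence in $\TCcirc$. Writing $L = p/q$ in lowest terms, the subgroup of $\Ss{L}$ generated by $[1]$, namely $(\ZZ + L\ZZ)/L\ZZ$, equals $(\tfrac{1}{q}\ZZ)/(\tfrac{p}{q}\ZZ)$, a cyclic group of order $p$. Consequently, as $z$ ranges over $\ZZ$, the element $[z]$ takes only finitely many values in $\Ss{L}$, so the relation $C_{z,t,k}(a,b,c) = C(a+_L[z],b+_L[t],c+_L[k])$ depends only on the residues of $(z,t,k)$ modulo $p$, yielding only finitely many distinct ternary relations.

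Next I would combine this finiteness with the quantifier elimination from \cref{thm:Elementary.Equiv.for.Circle.Graphs}. Since $\LL_{G,C}$ restricted modulo logical equivalence in $\TCcirc$ becomes a finite relational language, there are only finitely many atomic, hence quantifier-free, formulas in any fixed finite tuple of variables up to equivalence. By quantifier elimination, every complete $n$-type is determined by its quantifier-free part, so $\TCcirc$ has only finitely many $n$-types for each $n \in \NN$. The Ryll-Nardzewski theorem \cite[Theorem 7.3.1]{Hodges} then yields that $\TCcirc$ is $\omega$-categorical.

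Finally I would transfer this back to $\Tcirc$ and to graphs. Since $\mathcal{L}_G \subseteq \mathcal{L}_{G,C}$, every $n$-type of $\Tcirc$ is the restriction of at least one $n$-type of $\TCcirc$, so $\Tcirc$ also has finitely many $n$-types for each $n$ and is therefore $\omega$-categorical. By \cref{thm:Elementary.Equiv.for.Circle.Graphs}, any g.e.c.\ graph on a countable integer-distance free dense subset of $\Ss{L}$ is a countable model of $\Tcirc$, so any two such graphs must be isomorphic. I do not anticipate a genuine obstacle: the only mildly delicate point is the arithmetic identification of $(\ZZ + L\ZZ)/L\ZZ$ as a finite cyclic group, which is where the rationality of $L$ is used in an essential way (compare with \cref{thm:Elementary.Equiv.for.Circle.Graphs}, where irrational $L$ forces $\{[z]:z\in\ZZ\}$ to be dense in $\Ss{L}$ and hence produces infinitely many distinct $C_{z,t,k}$ relations).
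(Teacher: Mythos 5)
Your proof follows essentially the same route as the paper: the paragraph preceding the corollary in the paper gives the same sketch (finiteness of $\{C_{z,t,k}\}$ up to $\TCcirc$-equivalence when $L\in\mathbb{Q}$, then quantifier elimination gives finitely many $n$-types, then Ryll-Nardzewski gives $\omega$-categoricity), which you fill in with the correct arithmetic of $(\ZZ+L\ZZ)/L\ZZ$. One small economy you could make: you do not actually need to pass to $\omega$-categoricity of $\Tcirc$ --- it suffices that $\TCcirc$ is $\omega$-categorical, since both graphs carry canonical $\mathcal{L}_{G,C}$-expansions that are countable models of $\TCcirc$, and an $\mathcal{L}_{G,C}$-isomorphism between them is in particular a graph isomorphism. (Your justification that $\Tcirc$ inherits $\omega$-categoricity by type restriction is correct but uses the nontrivial fact that every model of the $\LL_G$-reduct theory elementarily embeds into the $\LL_G$-reduct of some model of $\TCcirc$.)
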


To complete the picture of \cref{thm:Elementary.Equiv.for.Circle.Graphs}, we will also show in $T_{\mathcal{L}_{G}}(\Ss{L})$ the ternary relations $\{C_{z,t,k} : z,t,k\in\ZZ\}$ are definable in the pure graph language with parameters.

For convenience, we will write $[x]+_L [y]$ as $x+y$ for all $x,y\in\RR$ for the rest of this section.

Let $L>2$ and $S\subseteq \Ss{L}$ be a dense subset. For $a,b\in S$, let $I(a,b)$ be the shorter arc (including end points) in $\Ss{L}$ between $a$ and $b$. Define $A[a,b]:=I(a,b)\cap S$. One of the main results in \cite[Theorem 1.1]{AS} is that $A[a,b]$ can be recovered from a g.e.c.\ graph $G=(S,E)$. Indeed their proof shows that $A[a,b]$ is uniformly definable in the graph language for adjacent $a,b\in S$. In fact when unit-balls are definable by the graph structure, for example when $L>4$ by the following remark, we can define intervals $A[a,b]$ easily as noticed by \cite[Lemma 4.4]{AS}. 

\begin{remark}\label{B1Definable}
Let $G=(S,E)$ be a g.e.c.\ graph on a dense set $S\subseteq \Ss{L}$. If $L>4$ then the open unit-balls $B_1(v)\cap S$ restricted to $S$ are definable by the following formula $$x\in N_2(v)\land\forall z ((x\mathrel{E}z)\to z\in N_2(v)).$$
\end{remark}

\begin{lem}
Let $L>3$ and $S\subseteq \Ss{L}$ be a dense subset. Let $B(-,-)$ be the binary relation on $S$ interpreted as $B(x,y)$ whenever $d_L(x,y)<1$. Then in the structure $(S, B(-,-))$ there is a formula $\phi(x;y,z)$ such that $\phi(S;a,b)=A[a,b]$ for all $a,b$ satisfying $B(a,b)$.
\end{lem}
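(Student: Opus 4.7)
I propose taking
\[ \phi(x;y,z) \;:=\; B(x,y) \wedge B(x,z) \wedge \forall w \bigl((B(w,y) \wedge B(w,z)) \to B(w,x)\bigr). \]
The geometric intuition: for $a,b \in S$ with $B(a,b)$, write $\delta := d_L(a,b) \in [0,1)$. Using the isometric rotation action of $(\Ss{L},+_L)$ I would normalize $a=0$ and $b=\delta$; then $A[a,b] = [0,\delta]$ and $B_1(a) \cap B_1(b)$ is the open arc $(\delta-1, 1)$ of length $2-\delta$ passing through $0$. In this common neighborhood the points of $A[a,b]$ need to be distinguished from the two ``bulges'' $(\delta-1, 0) \cup (\delta, 1)$, and my formula captures the distinction as a \emph{centrality} condition: $x \in A[a,b]$ should be equivalent to the unit ball about $x$ already containing all of $B_1(a)\cap B_1(b)$.

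For the inclusion $A[a,b] \subseteq \phi(S;a,b)$, the key claim is $B_1(a) \cap B_1(b) \subseteq B_1(x)$ for $x \in [0,\delta]$. A short case split on whether a candidate $w \in (\delta-1,1)$ is represented in $[0, L)$ by a non-negative real in $[0,1)$ or by a real in $(L+\delta-1, L)$ (the wrap-around), combined with the explicit formula $d_L(w,x) = \min(|w-x|, L-|w-x|)$, yields a value strictly less than $1$ in either case.

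For the reverse inclusion, suppose $x \in S$ satisfies $\phi(x;a,b)$ yet $x \notin A[a,b]$; then $x \in B_1(a) \cap B_1(b) \setminus A[a,b] = (\delta-1,0) \cup (\delta,1)$, and without loss of generality $x \in (\delta,1)$. The plan is to exhibit a non-empty open sub-arc of $B_1(a)\cap B_1(b)$ disjoint from $\overline{B_1(x)}$; density of $S$ then supplies a witness $w \in S$ contradicting $\phi(x;a,b)$. A natural candidate is $w_0 := \delta-1+\epsilon$ for small $\epsilon > 0$: one computes $d_L(w_0,x) = \min(x-\delta+1-\epsilon,\; L-(x-\delta+1-\epsilon))$, so the requirement $d_L(w_0,x) > 1$ reduces to the two inequalities $\epsilon < x-\delta$ and $L > x-\delta+2-\epsilon$.

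Main obstacle: the only nontrivial point is this last inequality, and it is exactly where the hypothesis $L > 3$ enters. Since $x-\delta+2 < 3$ for all $x \in (\delta,1)$ and $\delta \in (0,1)$, the hypothesis $L > 3$ gives $L > x-\delta+2 > x-\delta+2-\epsilon$ for any $\epsilon > 0$, and a neighborhood of $w_0$ then consists entirely of valid witnesses. The inequality is sharp in the limit $\delta \to 0$, $x \to 1$, explaining why $L > 3$ is a natural threshold for this argument.
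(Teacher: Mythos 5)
Your formula is essentially the paper's (the paper writes $\phi(x;y,z):=\forall v\,((B(v,y)\land B(v,z)) \to B(x,v))$; your extra conjuncts $B(x,y)\wedge B(x,z)$ follow from the universal clause by instantiating $v=y$ and $v=z$, so they are redundant but harmless), and your verification, including the identification of $L>3$ as exactly the needed threshold via the limit $\delta\to 0$, $x\to 1$, is the same ``easy check'' the paper leaves to the reader.
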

\begin{proof} Let $$\phi(x;y,z):=\forall v(B(v,y)\land B(v,z)) \to B(x,v)).$$ It is easy to check this defines $A[y,z]$ when $L>3$ and $d_L(y,z)<1$.
\end{proof}

We will further show that $(S, B(-,-))$ defines (with parameters) the ternary relations $\{C_{z,t,k} : z,t,k\in \ZZ\}$ and the integer distance points. The set of parameters needed is a sequence of elements in $S$ which fixes the direction of the circular order in $\Ss{L}$.

\begin{defn}
Let $L>3$, a sequence $a_0,\ldots,a_{n_L}=a_0$ where $n_L:=\lfloor L\rfloor +1$ in $\Ss{L}$ is called \emph{an orienting loop} if $0<d_L(a_i,a_{i+1})<1$, $C(a_i,a_{i+1},a_{i+2})$ and $\{I(a_i,a_{i+1})\setminus\{a_i,a_{i+1}\}:i<n_L\}$ forms a partition of $\Ss{L}\setminus\{a_0,\cdots,a_{n_L-1}\}$.
\end{defn}

\begin{prop}\label{prop:circularorder}
For all $L>3$, let $S\subseteq \Ss{L}$ be a dense subset and $a_0,\ldots,a_{n_L}=a_0$ be an orienting loop in $S$. Then the structure $S_L:=(S,B(-,-),(a_i)_{i\leq n_L})$ defines the ternary relations $\{C_{z,t,k} : z,t,k\in \ZZ\}$ and the partial functions $\{f_{z}:=(a\mapsto a + z) : z\in\ZZ\}$.
\end{prop}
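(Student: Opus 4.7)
The plan is to build up the definitions in three stages, each using the previous as a building block.

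First, I use the orienting loop together with the preceding lemma to define the circular order $C(-,-,-)$ on $S$. Since $B(a_i,a_{i+1})$ holds for each loop arc, the interval $A[a_i,a_{i+1}]$ is $S_L$-definable, and so every $b\in S$ has a definable arc-index $i(b)$ with $b\in A[a_{i(b)},a_{i(b)+1}]$. Given $a,b,c\in S$, I compute $C(a,b,c)$ by case analysis: when $i(a), i(b), i(c)$ are pairwise distinct modulo $n_L$, the value of $C(a,b,c)$ is determined by the fixed cyclic order of loop indices; when two or three coincide, the relevant points share an arc of length less than $1$ and their relative positions are read off the formula $A[-,-]$ applied inside that arc.

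Next, I define $f_1$ via the first-order condition: $b=f_1(a)$ iff $\neg B(a,b)$ and every $c\in S$ with $C(a,c,b)$ satisfies $B(a,c)$. The first clause gives $d_L(a,b)\geq 1$; the second, combined with density of $S$ in the positive arc from $a$ to $b$, forces $d_L(a,b)\leq 1$ (any $c$ strictly between $a$ and $b$ with $d_L(a,c)>1$ would violate $B(a,c)$). Thus $b=a+1$. The partial function $f_{-1}$ is defined symmetrically, and $f_z$ for $z\in\ZZ$ arises by iterated partial composition.

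For the ternary relations $C_{z,t,k}$, translation invariance on the circle gives $C_{z,t,k}(a,b,c)\Leftrightarrow C(a,\,b+(t-z),\,c+(k-z))$, so it suffices to define $C(a,b+m,c+n)$ for $a,b,c\in S$ and $m,n\in\ZZ$. The key equivalence is $b+m\in I(a_j,a_{j+1})\Leftrightarrow b\in I(a_j-m,a_{j+1}-m)$. Although the shifted endpoints $a_j-m$ generally do not lie in $S$, the combinatorial position of the shifted arc relative to the orienting loop --- which loop points it contains, and in what order --- is fixed data depending only on $m$, $j$, and the loop's arc lengths. Using density of $S$ together with the loop constants, I can definably characterize "$b\in I(a_j-m,a_{j+1}-m)$" and thus locate $b+m$ inside the orienting-loop partition, and similarly for $c+n$. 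Once both shifted points are located, $C(a,b+m,c+n)$ reduces to the case analysis of the first stage.

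I expect the main obstacle to be the third stage: the shifted landmarks $a_j-m$ are not elements of $S$, so they cannot serve as parameters, yet their positions must be pinned down in order to locate $b+m$ precisely. The resolution relies on the fact that the combinatorial refinement of the arc partition under a $\ZZ$-shift is completely determined by the loop and $m$, while density of $S$ converts strict positional comparisons across a shifted landmark into first-order conditions over $S$ and the orienting-loop constants.
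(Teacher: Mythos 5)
Your three-stage plan parallels the paper's proof in outline, and Stage~1 is a genuinely different (and arguably simpler) route: the paper defines the circular order via \emph{uni-directional paths} anchored at the loop points, whereas your arc-index case analysis reads $C$ off directly from $A[-,-]$ and the cyclic order of indices. That part is fine.

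Stage~2 has a real bug. Your characterization of $f_1$ is correct — the two clauses do force the positive arc from $a$ to $b$ to have length exactly $1$, and $L>3$ rules out $b=a-1$. But then you set $f_z = f_1^z$, and the domain of the iterated partial composition is $\{a\in S : a+1, a+2,\ldots, a+z \in S\}$, which is strictly smaller than the intended domain $\{a\in S : a+z\in S\}$ whenever some intermediate point $a+i$ ($0<i<z$) is missing from $S$. The proposition is stated for an arbitrary dense $S$, so this matters: it is exactly the non-integer-distance-free case (where $f_z$ can have nonempty domain) that motivates \cref{cor:IntergerDef}. You would need a direct characterization of ``the positive arc from $a$ to $b$ has length exactly $m$'' that does not pass through the intermediate points; the paper does this via the intervals $F[a+n,a+n+1)$, built inductively from $F[a,a+1)$.

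Stage~3 is where the essential content lies, and this is precisely the place your proposal stops short. You correctly reduce to characterizing ``$b\in I(a_j-m, a_{j+1}-m)$'' for $b\in S$, note that the shifted endpoints $a_j-m$ are generally not in $S$, and assert that ``density of $S$ converts strict positional comparisons across a shifted landmark into first-order conditions.'' But that assertion is the entire difficulty, and it is not made precise. Concretely, when $b$ and $a_j-m$ lie in the \emph{same} loop arc $A[a_p,a_{p+1}]$, the comparison ``$b$ before $a_j-m$'' is equivalent to ``the positive arc length from $b$ to $a_j$ exceeds $m$''; this is a quantitative condition about a real number that must be turned into a formula in $B$, $C$, and the loop constants. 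There is no formula with just the orienting loop as landmarks that does this for free — one has to build machinery, and the paper's $F$-interval construction (or, equivalently, an argument with uni-directional paths of the right length witnessing ``positive arc~$<m$'') is exactly that machinery. Knowing the \emph{combinatorial} position of the shifted arc relative to the loop (which loop points it contains) does not by itself pin down where $a_j-m$ sits inside its arc relative to a given $b$. So the argument, as written, has a genuine gap at the step you yourself flagged as the main obstacle; the resolution you gesture at is the thing that still needs to be constructed.
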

\begin{proof}
By the fact that $A[a,b]$ are uniformly definable in $S_L$ for all $a,b$ satisfying $B(a,b)$, we can define a \emph{uni-directional path of length n} to be a sequence $p_0,\ldots,p_n$ in $S$ such that $p_i\neq p_{i+1}$, $B(p_i,p_{i+1})$ and $A[p_i,p_{i+1}]\cap A[p_{i+1},p_{i+2}]=\{p_{i+1}\}$ for all $i$. With the orienting loop $a_0,\ldots,a_{n_L}$ we can define the circular order $C(-,-,-)$ as follows. Given $i \neq j< n_L$, let $D_{ij}:=\bigcup_{i\leq m<j}A[a_m,a_{m+1}]$ if $i<j$ and $D_{ij}:=\bigcup_{i\leq m<n_L, 0\leq m<j}A[a_m,a_{m+1}]$ if $i>j$ (note that $D_{ij}$ never equals $S$).
Let $T(x,y,z)$ be the formula expressing there are $i \neq j < n_L$ and a uni-directional path $p_0=a_i,p_1,\ldots, p_m=a_j$ with $m<n_L +3$ 
which contains $x,y,z$ as a subsequence (not necessarily consecutive) such that $\bigcup_{t<m} A[p_t,p_{t+1}]=D_{ij}$. Now we may define the circular order $C(x,y,z)$ by the formula $$C(x,y,z):=T(x,y,z)\vee T(y,z,x)\vee T(z,x,y).$$ Next we define in $S_L$ inductively the intervals $F[a+n,a+n+1):=\{x\in S: C(a+n,x,a+n+1)\text{ or }x=a+n\}$ for any $a\in S$ and $n\in\NN$ as follows. Define $$x\in F[a,a+1):=\left(x=a\vee (B(x,a)\land\exists w \neg B(w,a)\wedge (C(a,x,w))\right),$$ 
and for $n>0$ define $x\in F[a+n,a+n+1)$ as: $$x\not\in F[a+n-1,a+n)\land\exists z\in [a+n-1,a+n)\left (x\in F[z,z+1)\right).$$ Hence we may define the partial functions $y=f_z(x)$ for any integer $z>0$ by $$y\in F[x+z,x+z+1)\land \neg\exists y',y''\in F[x+z,x+z+1)(C(y',y,y'')).$$ 
Similarly, we can define the intervals $F(a-n,a-n+1]$ and the partial functions $f_z(x)$ for $z<0$. 
Together we can define the intervals $F[a-z,a-z+1)$ for all $z \in \ZZ$. 

Given $z,k\in\ZZ$ and $a,b\in S$, let $J_{z,k}(a,b):=(a+z\neq b+k)\land \neg C(a+z,b+k+1,a+z+1)$, which is a definable condition\footnote{Note that $a+z,b+k+1$ and $a+z+1$ might not be in $S$, hence $C(a+z,b+k+1,a+z+1)$ is not a priori definable.} defined by \begin{align*}
    &(F[a+z,a+z+1)\not\subseteq F[b+k,b+k+1)\cup F[b+k+1,b+k+2)) \\
    \lor & (F[a+z,a+z+1)=F[b+k+1,b+k+2)).
\end{align*}  When $J_{z,k}(a,b)$ holds, we may define $D(a+z,b+k):=\{x\in S: C(a+z,x,b+k)\}$ by the following formula $$\exists v\in F[a+z,a+z+1)\forall v'\in F[b+k,b+k+1) (C(v,x,v')).$$ Note that $J_{z,k}(a,b)$ is equivalent to $J_{z-t,k-t}(a,b)$ for all $t\in\ZZ$. Thus, assuming $J_{z,k}(a,b)$, we have $C_{z,t,k}(a,c,b)$ holds iff $c\in D(a+z-t,b+k-t)$. Since $L>3$ we have $C_{z,t,k}(a,c,b)$ implies $J_{z,k}(a,b)\lor J_{t,z}(c,a)\lor J_{k,t}(b,c)$. Now we may define $C_{z,t,k}(a,c,b)$ by
\begin{align*}
    &(J_{z,k}(a,b)\land c\in D(a+z-t,b+k-t))\\
    \lor&(J_{t,z}(c,a)\land b\in D(c+t-k,a+z-k))\\
    \lor&(J_{k,t}(b,c)\land a\in D(b+k-z,c+t-z)).\qedhere
\end{align*}

\end{proof}

\begin{coro}\label{cor:IntergerDef}
Let $L>4$ and $S\subseteq \Ss{L}$ be a dense subset. Then any g.e.c.\ graph $G$ on vertex set $S$ defines (with parameters) the ternary relations $\{C_{z,t,k} : z,t,k\in \ZZ\}$ and the partial functions $\{f_{z}:=(a\mapsto a + z) : z\in\ZZ\}$.
\end{coro}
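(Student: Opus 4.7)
The plan is to combine the two preceding results: \cref{B1Definable}, which shows that when $L>4$ the unit-ball relation is recoverable from the pure graph language, and \cref{prop:circularorder}, which shows that the structure $(S,B(-,-),(a_i)_{i\leq n_L})$ already defines the $C_{z,t,k}$ and $f_z$. The corollary will follow as soon as we produce a definable copy of $B(-,-)$ inside $G$ and exhibit an orienting loop lying in $S$ to serve as the parameter tuple.

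First, I would apply \cref{B1Definable} to note that for $L>4$ the formula
\[
\beta(x,v)\ :\equiv\ x\in N_2(v)\,\wedge\,\forall z\,((x\mathrel{E}z)\to z\in N_2(v))
\]
defines, in the pure graph language, the set $B_1(v)\cap S$. Consequently the binary relation on $S$ given by ``$d_L(x,y)<1$'' is defined (modulo the diagonal, which is harmless) by $\beta(x,y)\vee x=y$. Thus the reduct of $G$ to $\LL_G$ already interprets $B(-,-)$ in the sense used in \cref{prop:circularorder}.

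Next I would produce parameters. Because $L>4$, we have $n_L=\lfloor L\rfloor+1\geq 5$ and $L/n_L<1$. Partitioning $\Ss{L}$ into $n_L$ consecutive arcs of length $L/n_L$ and using the density of $S$, we may pick $a_i\in S$ inside the $i$-th arc for $i<n_L$, with a small enough perturbation so that consecutive distances stay strictly below $1$, the circular order $C(a_i,a_{i+1},a_{i+2})$ is preserved, and the arcs $I(a_i,a_{i+1})\setminus\{a_i,a_{i+1}\}$ still partition $\Ss{L}\setminus\{a_0,\dots,a_{n_L-1}\}$. Setting $a_{n_L}:=a_0$, this yields an orienting loop in $S$, which is the parameter tuple we need.

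With $B(-,-)$ definable from $G$ and the orienting loop $(a_i)_{i\leq n_L}$ chosen in $S$, \cref{prop:circularorder} applies verbatim and produces, from $G$ together with these parameters, the definitions of all $C_{z,t,k}$ and of all partial translations $f_z$, which is exactly the statement of the corollary. I do not foresee a genuine obstacle here; the only small point to be careful about is the matching of the diagonal in the definable $B$-relation and the existence of an orienting loop inside $S$, both of which are immediate given $L>4$ and the density of $S$.
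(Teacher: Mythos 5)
Your proof is correct and follows the same route as the paper: invoke \cref{B1Definable} to recover $B(-,-)$ from the graph language (valid since $L>4$), then apply \cref{prop:circularorder} with an orienting loop in $S$ as parameters. The paper's own proof is terser and simply asserts the existence of an orienting loop in $S$; your explicit construction via density and small perturbations is a welcome (and correct) elaboration of that step.
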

\begin{proof}
If $L>4$, then $B(-,-)$ is definable from $G$ by \cref{B1Definable}. Let $a_0,\cdots,a_{n_L}=a_0$ be an orienting loop  in $S$. Then by \cref{prop:circularorder} we get that $f_z$ is definable for all $z\in\ZZ$.
\end{proof}
\begin{remark}\label{rem:why integer-distance free}
  \cref{cor:IntergerDef} explains why having pairs of elements of integer-distance is an obstacle to g.e.c.\ graphs in $\Ss{L}$ being isomorphic or even elementary equivalent. For example, suppose that $L>4$ is not an integer, and suppose that $S$ is a dense set which has exactly one pair $(a,b)$ with $d_L(a,b)<1$ and $d_L(a,b)=N \pmod{L}$ for some $N\in\NN$. Then $\{a,b\}$ is definable from any orienting loop in $S$. But by definition, some g.e.c.\ graphs on $S$ will have an edge between $a$ and $b$ but some will not: the theory of the structure $(S,E,a_0,\ldots a_{n-1})$ is not the same for any choice of g.e.c.\ graph on $S$, even when we fix the graph structure on $a_0,\ldots, a_{n-1}$.

  On the other hand, when $S$ is integer-distance free and $E_0,E_1$ are two sets of edges such that $(S,E_0)$ and $(S,E_1)$ are g.e.c.\ then for any choice of elements $a_0,\ldots,a_{n-1}$ from $S$, if $E_0$ and $E_1$ agree on $a_0,\ldots, a_{n-1}$, then $(S,E_0,a_0,\ldots,a_{n-1}) \equiv (S,E_1,a_0,\ldots,a_{n-1})$ by quantifier elimination, see \cref{thm:Elementary.Equiv.for.Circle.Graphs}.

  \begin{question}
    In \cref{rem:why integer-distance free} we named the constants for an orienting loop, is this necessary? Namely, is it true that in the situation described in the first part \cref{rem:why integer-distance free} there are sets of edges $E_0,E_1$ on $S$ such that $(S,E_0) \not \equiv (S, E_1)$ while both are g.e.c.?
  \end{question}
  
\end{remark}

\section{Elementary equivalence}\label{sec:elentaryEquivalence}

Our aim of this section is to generalize the elementary equivalence result from circles (\cref{thm:Elementary.Equiv.for.Circle.Graphs}) to an arbitrary separable metric space $(X,d)$ without isolated points. 
By \cref{rem:why integer-distance free} we know that the density of the vertex set alone does not guarantee all g.e.c.\ graphs in $(X,d)$ are elementary equivalent: in the case of circles, we need to assume additionally that the vertex set is integer-distance free. On the other hand, being dense and integer-distance free is a generic property of a countable subset of a circle: if we choose a countable set independently uniformly at random in $\Ss{L}$, almost surely it will be dense and integer-distance free. We might expect all g.e.c.\ graphs on ``generic'' countable sets are elementary equivalent. For this we need to determine when a set is generic. We will show in this section that a good notion of genericity is to require that every finite tuple in the countable set avoids any ``thin" definable set where thin means containing no non-empty open balls. If $(X,d)$ comes with a strictly positive probability measure (i.e., one that assigns a positive measure to every nonempty open ball) such that all thin definable sets are of measure 0, then an i.i.d. sampling of an infinite sequence of points in the metric space will almost surely be generic.

\begin{defn}[Defining structure and independent sets]\label{def:defining structure and ind set}
Let $(X,d)$ be a separable metric space. 
\begin{itemize}
    \item 
We call an $\mathcal{L}$-structure $ M=(M,\ldots)$ a \emph{defining structure} of $(X,d)$ if $X\subseteq M^n$ is an $\mathcal{L}$-definable set for some $n\in\NN$ and the binary relation $d(x,y) < 1$ is $\mathcal{L}$-definable over $\emptyset$.

\item We call a definable subset $A\subseteq X$ \emph{thick} in $(X,d)$ if $A$ contains a non-empty open ball. Otherwise, we say $A$  is \emph{thin}. 

\item We say that \emph{thickness is definable} in $ M$, if for any $\mathcal{L}$-formula $\varphi(x,y)$ without parameters and with $|x|=n$ and $|y|=m$, there is a formula $\psi(y)$ without parameters 
such that for any $\bar{b}\in X^m$, $M\vDash \psi(\bar{b})$ iff $\varphi(X,\bar{b})$ is thick. 

\item A subset $Y\subseteq X$ is called \emph{independent} if for all $\mathcal{L}$-formula $\varphi(x,y)$ with $|x|=n$ and $|y|=mn$ for some $m\in\NN$, for all $\bar{b}\in Y^m$, if $\varphi(X,\bar{b})$ is thin then $(Y\cap\varphi(X,\bar{b})) \subseteq \bar{b}$.


\end{itemize}
\end{defn}



\begin{example}
\begin{enumerate}    
    \item Being an independent subset of a metric space depends on the choice of the defining structure. Namely, given $(X,d)$, there can be an $\mathcal{L}$-structure $M$ and an $\mathcal{L}'$-structure $ M'$, which are both defining structures of $(X,d)$, and a dense subset $S$ which is independent in $ M$ but not independent in $ M'$. For example in the circle $\mathbb{S}_L$ for $L$ large enough, let $ M:=(\mathbb{S}_L,+_L,B(-,-), \bar{a})$ and $ M':=(\mathbb{S}_L,B(-,-), \bar{a})$, where $B(a,b)$ iff $d(a,b)<1$ and $\bar{a} = (a_0,\ldots, a_{n_L})$ is an orienting loop. In both structures, thick sets are those sets containing an interval, hence thickness is definable since the circular order is definable by \cref{prop:circularorder}. However, a dense set $S\subseteq \mathbb{S}_L$ with three distinct points $x_0,x_1,x_2$ such that $d(x_0,x_1)=d(x_1,x_2)\not\in\{z\pmod{L} : z\in\ZZ\}$ is not independent in $ M$ (because the set defined by $x-_L x_1 = x_1-_L x_2$ is thin). But $S$ can be independent in $ M'$ (the reason that $z$ is not an integer modulo $L$ is because the partial function $x \mapsto x+z$ is definable by \cref{prop:circularorder}).

    \item With a similar proof to that of \cref{thm:Elementary.Equiv.for.Circle.Graphs}, one can see that $\Th(M')$ has quantifier elimination after adding the circular order relation and the functions $x\mapsto x+z \pmod{L}$. From this, it is not hard to see that if $L$ is not an integer, 
    then a set $S\subseteq \mathbb{S}_L$ is dense independent iff it is dense and integer-distance free.
\end{enumerate}
\end{example}

Let $(X,d)$ be a separable metric space without isolated points and $M$ a defining structure in the language $\mathcal{L}$. Given a graph $G=(V,E)$ with $V\subseteq X$, we put on $V$ two structures, $V_{\text{ind}}$ and $G_{\text{ind}}$ in the languages $\mathcal{L}_{\text{ind}}$ and $\mathcal{L}_{\text{ind}}^E:=\mathcal{L}_{\text{ind}}\cup \{E\}$ respectively, where \[\mathcal{L}_{\text{ind}}= \{ R_{\phi} : \phi(x_1,\ldots,x_n) \in \mathcal{L}, x_i \text{ variables in the sort of }X \}\] and $R_{\phi}$ is an $n$-any relation interpreted as $\{(a_1,\ldots,a_n) \in V^n: M \vDash \phi(a_1,\ldots,a_n)\}$ in both structures.

\begin{thm}\label{thm:elementary equivalence for dense independent sets}
Let $(X,d)$ be a separable metric space without isolated points and $M$ a defining structure in the language $\mathcal{L}$. Let $V_1$ and $V_2$ be two independent dense sets of $X$. Suppose \emph{thickness is definable} in $ M$ 
and $G_1=(V_1,E_1)$, $G_2=(V_2,E_2)$ are g.e.c.\ graphs in $(X,d)$. Then $(G_1)_{\text{ind}}$ and $(G_2)_{\text{ind}}$ are elementary equivalent in $\mathcal{L}_{\text{ind}}^{E}$ (and hence also in the graph language $\mathcal{L}_G$ and in $\mathcal{L}_{\text{ind}}$).

Moreover, both $(V_1)_{\text{ind}}$ and $(G_1)_{\text{ind}}$ have quantifier elimination (in their respective languages). 

\end{thm}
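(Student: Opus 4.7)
The plan is to mimic the proof of \cref{thm:Elementary.Equiv.for.Circle.Graphs} at a higher level of abstraction: the explicit manipulation of circular order and integer shifts is replaced by the two structural inputs now available, namely that thickness is definable in $M$ and that $V_1,V_2$ are independent. Fix a finite fragment $\LL_0\subseteq\mathcal{L}_{\text{ind}}^{E}$; I will exhibit a winning strategy for Duplicator in every finite-round EF game on $(G_1)_{\text{ind}}$ and $(G_2)_{\text{ind}}$ in $\LL_0$, which yields $(G_1)_{\text{ind}}\equiv_{\LL_0}(G_2)_{\text{ind}}$ via \cref{fac:EF games}. Accordingly, define an \emph{$n$-elementary map} $f\colon A\to B$ between finite subsets $A\subseteq V_1$ and $B\subseteq V_2$ to be a bijection which restricts to an isomorphism of the induced subgraphs and which preserves the $R_\psi$-diagram for every $\mathcal{L}$-formula $\psi$ in a prescribed finite list $\Phi_n$. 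The list $\Phi_n$ grows with $n$ and is chosen so that the unit-distance relation $d(x,y)<1$ lies in $\Phi_0$ and so that the closure condition used in the extension step below is met.

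The heart of the proof, and its main obstacle, is an extension lemma analogous to \cref{cla:n-elemnetary map}: any $n$-elementary map with $n\geq 1$ can be extended by any $a\in V_1\setminus A$ to an $(n-1)$-elementary map on $A\cup\{a\}$. Let $\phi(x,\bar a)$ denote the conjunction, over all $\psi\in\Phi_{n-1}$ and all parameter tuples from $A$, of the literal satisfied by $a$; up to logical equivalence in $M$ this is a single $\mathcal{L}$-formula because $A$ is finite. Since $a\notin A\supseteq\bar a$ and $V_1$ is independent, the definable set $\phi(X,\bar a)$ cannot be thin, so it is thick. By definability of thickness there is an $\mathcal{L}$-formula $\theta(\bar y)$ with $M\vDash\theta(\bar a)\leftrightarrow(\phi(X,\bar a)\text{ is thick})$; demanding $\theta\in\Phi_n$ forces $\theta(f(\bar a))$ to hold, whence $\phi(X,f(\bar a))$ is thick and contains some open ball $B_\varepsilon(s_0)$.

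The new vertex $b$ is then chosen inside $V_2\cap B_\varepsilon(s_0)$. Let $A_+=\{a'\in A:a\,E_1\,a'\}$ and $A_-=(A\cap B_1(a))\setminus A_+$; because $\phi$ records $d(x,a')<1$ for $a'\in A_+\cup A_-$ and its negation for $a'\in A\setminus B_1(a)$, every point of $B_\varepsilon(s_0)$ lies at distance less than $1$ from each element of $f(A_+)\cup f(A_-)$ and at distance at least $1$ from the rest of $f(A)$. By density pick a vertex $s\in V_2\cap B_\varepsilon(s_0)$ as centre, and apply the g.e.c.\ property of $G_2$ with required-neighbour set $f(A_+)$, required-non-neighbour set $f(A_-)$ (both automatically inside $B_1(s)$), and radius smaller than $\varepsilon-d(s,s_0)$; this produces $b\in V_2\setminus(f(A_+)\cup f(A_-))$ with the prescribed adjacencies and with $b\in B_\varepsilon(s_0)\subseteq\phi(X,f(\bar a))$. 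The unit-threshold condition forces both $b\neq f(a')$ and non-adjacency for every $a'\in A\setminus B_1(a)$, so $f\cup\{(a,b)\}$ is $(n-1)$-elementary by construction. The $V_2$-side extension is symmetric.

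For the moreover clause, quantifier elimination follows as in the final paragraph of the proof of \cref{thm:Elementary.Equiv.for.Circle.Graphs}: the extension lemma can be recast as a first-order schema of the form $\forall\bar x\bar y\,\forall x\,\exists y\,(\phi_{n,m}(\bar x,\bar y)\to\phi_{n-1,m}(\bar xx,\bar yy))$ valid in the common theory, so in any $\omega$-saturated model of it the back-and-forth property holds at every level. Such a model is also finitely universal, hence \cref{cor:qf-criteria} gives QE of $\Th((G_1)_{\text{ind}})$ in $\mathcal{L}_{\text{ind}}^{E}$. The argument for $\Th((V_1)_{\text{ind}})$ in $\mathcal{L}_{\text{ind}}$ is strictly simpler: drop the edge relation throughout, and in the extension step use only density of $V_2$ inside $B_\varepsilon(s_0)\cap\phi(X,f(\bar a))$, with no appeal to g.e.c.
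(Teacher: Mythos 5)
Your argument is essentially correct and reaches the conclusion by a genuinely different route than the paper's. The paper passes to a nonprincipal ultrapower $(M^*,(G_i)^*_{\text{ind}})$ that is countably saturated, proves a back-and-forth claim for finite partial maps preserving the \emph{full} $\mathcal{L}$-type (and the graph), and discharges the consistency of the resulting type via Łoś's theorem; your proposal keeps the original structures and replaces full types by an explicit hierarchy of finite formula sets $\Phi_n$, running a finite-round EF game exactly as in the circle case. The technical core is identical in both: independence of $V_1$ forces the conjunction of $\Phi_{n-1}$-literals satisfied by the new point $a$ over $\bar{a}$ to be thick, definability of thickness transports this across the $\Phi_n$-preserving map $f$ to $\phi(X,f(\bar{a}))$, and density plus g.e.c.\ produce the extending vertex inside the resulting open ball. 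The ultrapower approach avoids all bookkeeping and gives the quantifier-elimination statement in the same breath (the back-and-forth is directly about full qf-types in an $\omega$-saturated model); your approach is more elementary but requires you to actually exhibit the recursion for the $\Phi_n$'s and keep it finite.

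Two small but real gaps you should patch. First, your finite lists must be indexed by the tuple length as well as the level: the conjunction $\phi(x,\bar{a})$ has $|A|$-many parameter slots, so the thickness formula $\theta_\phi(\bar{y})$ you need in $\Phi_n$ has arity $|A|$; writing a single list $\Phi_n$ is only coherent after fixing the number of rounds of the game, so you really have $\Phi_{n,m}$ (you tacitly acknowledge this later by switching to $\phi_{n,m}$). Second, and more importantly for the moreover clause: the back-and-forth criterion \cref{cor:qf-criteria} requires matching \emph{full} quantifier-free types, so your schema only yields quantifier elimination if $\bigcup_n\Phi_{n,m}$ exhausts (up to equivalence) all $\mathcal{L}$-formulas in $m$ $X$-variables. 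Your recursion, as described, only closes under adding thickness formulas, which will in general not exhaust $\mathcal{L}_{\text{ind}}$. This is easy to fix — reduce to countable $\mathcal{L}$ as the paper does (``it is enough to check elementary equivalence and quantifier elimination on all countable sublanguages''), then interleave an enumeration of all $\mathcal{L}$-formulas into the recursion defining $\Phi_{n,m}$ — but as written the QE step does not go through.
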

\begin{proof}
We may assume $\mathcal{L}$ is countable, as it is enough to check elementary equivalence and quantifier elimination on all countable sublanguages. 
Take a non-principal ultrafilter $\mathcal{U}$ on $\NN$ such that the ultrapowers $( M^{*},(G_1)^{*}_{\text{ind}})=( M^{*},(V_1^{*},\ldots))$ and $( M^{*},(G_2)^{*}_{\text{ind}})=( M^{*},(V_2^{*},\ldots))$ of $( M,(G_1)_{\text{ind}})$ and $( M,(G_2)_{\text{ind}})$ along $\mathcal{U}$ are countably saturated, namely any consistent countable collection of formulas has a realization in $\mathcal{U}$. Clearly it is enough to show that $$(G_1)_{\text{ind}}^{*}\equiv_{\mathcal{L}_{\text{ind}}^E}(G_2)^{*}_{\text{ind}}.$$ 


We call a map $f:A\to B$ with $A\subseteq V_1^{*}$ and $B\subseteq V_2^{*}$ an \emph{$\mathcal{L}^E_{\text{ind}}$-embedding} if it satisfies:
\begin{itemize}
\item
$f$ is a graph isomorphism;
\item
Let $\bar{a}$ be any finite tuple of elements in $A$ and $\bar{b}=f(\bar{a})$. Then $\tp_{\mathcal{L}}(\bar{a})=\tp_{\mathcal{L}}(\bar{b})$ in $ M^{*}$.
\end{itemize}
(This is the same as an embedding in the relational language $\mathcal{L}^E_{\text{ind}}$.)

\begin{claim} \label{cla:back-n-forth}
Suppose $f:A\to B$ is an $\mathcal{L}^E_{\text{ind}}$-embedding, where  $A\subseteq V_1^{*}$ and $B\subseteq V_2^{*}$ are finite sets. Then for any $v_1\in V_1^{*}$, there is $v_2\in V_2^{*}$ such that $f\cup\{(v_1,v_2)\}$ is an $\mathcal{L}^E_{\text{ind}}$-embedding extending $f$. 
\end{claim}

Note that By symmetry it will follow that for any $v_2\in V_2^{*}$, there is $v_1\in V_1^{*}$ such that $f\cup\{(v_1,v_2)\}$ is an $\mathcal{L}^E_{\text{ind}}$-embedding.
\begin{proof}
We may assume that $v_1 \notin A$. Let $\bar{a}$ be an enumeration of $A$ and $\bar{b}$ be the image of $\bar{a}$ under $f$ which is an enumeration of $B$. By assumption $\text{tp}_{\mathcal{L}}(\bar{a})=\text{tp}_{\mathcal{L}}(\bar{b})$ in $ M^{*}$. Given $v_1\in V_1^{*}$. Let $p(x,\bar{y}):=\text{tp}_{\mathcal{L}}(v_1,\bar{a})$. Since $\bar{a}$ and $\bar{b}$ have the same $\mathcal{L}$-type, $p(x,\bar{b})$ is a consistent $\mathcal{L}$-type. Let $A_c:=\{a\in A: v_1\mathrel{E}^{*}_1a\}$. We only need to show that $$\Delta(x):=p(x,\bar{b})\cup\{x\mathrel{E}f(a):a\in A_c\}\cup\{\neg x\mathrel{E}b:b\in B\setminus f(A_c)\}$$ has a realization $v_2$ in $V_2^{*}$. Since $( M^{*},G_2^{*})$ is countably saturated, we only need to show that any finite subset of $\Delta(x)$ has a realization in $V_2^{*}$. Fix a formula  $\varphi(x,\bar{b})\in p(x,\bar{b})$. Choose for any $b\in B$ a representative $(b^{i})_{i\in \NN}$ in $M^{\NN}$. It suffices to show that $$\varphi(x,\bar{b}^{i})\land\bigwedge_{a\in A_c}x\mathrel{E}f(a)^{i}\land\bigwedge_{b\in B\setminus f(A_c)}\neg x\mathrel{E}b^{i}$$ has a solution in $( M,G_2)$ for $\mathcal{U}$-many $i\in \NN$.

Let $D:=\{a\in A:d(v_1,a)<1\}$. Note that $A_c\subseteq D$. Let

$$\psi(x,\bar{a}):= \varphi(x,\bar{a})\land \bigwedge_{a\in D} d(x,a)<1\, \land\, \bigwedge_{a'\in A\setminus D} d(x,a')\geq1.$$ 
Then $ M^{*}\vDash\psi(v_1,\bar{a})$. 


The fact that $V_1$ is dense independent and $v_1^i \in \psi(x,\bar{a}^i)\setminus \bar{a}^i$ for $\mathcal{U}$-many $i$, implies that $\psi( M,\bar{a}^i)$ is thick for all such $i \in \NN$.
Let $\xi(\bar{y})$ be such that for every $\bar{d}$, $M\vDash\xi(\bar{d})$ iff $\psi(M,\bar{d})$ is thick.
We therefore see that $ M^* \vDash \xi(\bar{a})$. 
Since $\text{tp}_{\mathcal{L}}(\bar{a})=\text{tp}_{\mathcal{L}}(\bar{b})$ we must also have that $ M^{*}\vDash \xi(\bar{b})$, therefore $\psi(M,\bar{b}^{i})$ is thick for $\mathcal{U}$-many $i\in \NN$. 
Hence for $\mathcal{U}$-many $i \in \NN$, $\psi(M,\bar{b}^{i})$ contains a non-empty open ball $B_{\epsilon_i}(v^i)$ for some $v^i\in X$. Let $u^i\in B_{\epsilon_i}(v^i)\cap V_2$. Then for $\mathcal{U}$-many $i \in \NN$ there is $\delta^i>0$ such that $B_{\delta^i}(u^i)\subseteq B_{\epsilon_i}(v^i)\subseteq \psi(M,\bar{b}^{i})$, $f(D)^{i}=\{f(a)^{i}:a\in D\}\subseteq  B_1(u^i)$ and $f(A_c)^{i}:=\{f(a)^{i}:a\in A_c\}\subseteq f(D)^{i}$. By g.e.c.\ there is $v_2^{i}\in V_2\cap B_{\delta^i}(u^i)$ such that $v_2^{i}\mathrel{E} d$ for all $d\in f(A_c)^{i}$ and $\neg v_2^{i}\mathrel{E} d'$ for all $d'\in f(D)^{i}\setminus f(A_c)^{i}$. As $v_2^{i}\in B_{\delta^i}(u^i)\subseteq \psi(M,\bar{b}^{i})$, we have $d(v_2,e)\geq 1$ for all $e\in B^{i}\setminus f(D)^{i}$. We conclude that \[(M,G_2)\vDash \varphi(v_2^{i},\bar{b}^{i})\land \bigwedge_{a\in A_c} v_2^{i}\mathrel{E}f(a)^{i}\,\land\,\bigwedge_{b\in B\setminus f(A_c)} \neg v_2^{i}\mathrel{E} b^{i}. \qedhere\]
\end{proof}

By \cref{cla:back-n-forth}, it is clear the Duplicator has a winning strategy in the Ehrenfeucht–Fra\"{i}ss\'{e} game between $(G_1)_{\text{ind}}^*$ and $(G_2)_{\text{ind}}^*$ (i.e., ensuring that in each round of the game the map remains an $\mathcal{L}_{\text{ind}}^E$-embedding) and thus $(G_1)_{\text{ind}}^{*}\equiv_{\mathcal{L}_{\text{ind}}^E}(G_2)_{\text{ind}}^{*}$ as required. The moreover part for $\mathcal{L}_{\text{ind}}^E$ follows from \cref{cor:qf-criteria}. Note that the same argument works with $\mathcal{L}_{\text{ind}}$ by proving an analogous claim for $\mathcal{L}_{\text{ind}}$-embedding so that the same is true for $\mathcal{L}_{\text{ind}}$.
\end{proof}
\begin{remark}
In model theory, there are well-studied expansions or reducts of existing structures which preserve nice properties. For example, lovely pairs, $H$-structures and so on (see \cite{BV}). Taking a dense independent set is similar to $H$-structures and the proof of the above Theorem is similar to \cite[Corollary 2.13]{BV}.
\end{remark}


\subsection{i.i.d.\ sets are dense independent}\label{sec-iid}
In this subsection, we show that the requirement for underlying sets being dense and independent for g.e.c.\ graphs are reasonable in the sense that when there is a probability measure compatible with the metric space $(X,d)$, then almost surely, any i.i.d.\ sampling of an infinite sequence of elements of $X$ will be dense and independent.

Throughout this section, we fix a compact separable metric space $(X,d)$ without isolated points and let the $\mathcal{L}$-structure $ M=(M,\ldots)$ be a defining structure for $(X,d)$.
\begin{defn}
We call a ($\sigma$-additive) probability measure $\mu$ on $X$ \emph{compatible with $ M$}, if the product measure $\mu^n$ measures all $\mathcal{L}$-definable subsets of $X^n$ for all $n\in\NN$, and for every definable set $D \subseteq X$, $\mu(D) =0$ iff $D$ does not contain any open ball (i.e., $D$ is thin).
\end{defn}


\begin{claim}\label{cla:thin sets get measure zero}
Let $\mu$ be a probability measure on $X$ compatible with $ M$.
Suppose $\phi(x,\bar{y})$ is a formula so that for every $\bar{b} \in X^m$ the set $D = \phi(X,\bar{b})$ is thin. 
Then $\mu^{m+1}(\{(a,\bar{b}) \in X^{m+1} : M \vDash \phi(a,\bar{b}) \}) = 0$.
\end{claim}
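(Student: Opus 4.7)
The plan is to prove this by a direct application of Fubini's theorem, using the compatibility of $\mu$ with $M$ to handle measurability and to convert ``thin'' into ``measure zero''.

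First I would verify that all the sets in sight are measurable. The set $E := \{(a,\bar{b}) \in X^{m+1} : M \vDash \phi(a,\bar{b})\}$ is an $\mathcal{L}$-definable subset of $X^{m+1}$, hence is $\mu^{m+1}$-measurable by the compatibility assumption on $\mu$. Similarly, for each fixed $\bar{b} \in X^m$, the slice $E_{\bar{b}} = \{a \in X : M \vDash \phi(a,\bar{b})\} = \phi(X,\bar{b})$ is definable (with parameters $\bar{b}$), so it is $\mu$-measurable.

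Next I would use the hypothesis. By assumption, $\phi(X,\bar{b})$ is thin for every $\bar{b} \in X^m$. Since $\mu$ is compatible with $M$, every thin definable subset of $X$ has $\mu$-measure zero. Hence $\mu(E_{\bar{b}}) = 0$ for every $\bar{b} \in X^m$.

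Finally I would invoke Fubini's theorem for the product measure $\mu^{m+1} = \mu \otimes \mu^m$, which applies since all measures involved are finite. This gives
\[
\mu^{m+1}(E) \;=\; \int_{X^m} \mu(E_{\bar{b}}) \, d\mu^m(\bar{b}) \;=\; \int_{X^m} 0 \, d\mu^m(\bar{b}) \;=\; 0,
\]
as required. There is no real obstacle here; the statement is essentially just Fubini combined with the definition of compatibility. The only mild subtlety is to check that the measurability hypotheses of Fubini are met, which follows immediately because each slice and the whole set $E$ are definable, and compatibility of $\mu$ ensures that definable sets (in any $X^k$) are $\mu^k$-measurable.
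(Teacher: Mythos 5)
Your proposal is correct and follows essentially the same route as the paper: apply Fubini to the product measure, note each slice $\phi(X,\bar{b})$ is thin hence $\mu$-null by compatibility, and integrate. You are simply a bit more explicit about the measurability preconditions, which the paper takes for granted.
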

\begin{proof}
Let $A:=\{(a,\bar{b}) \in X^{m+1} : M \vDash \phi(a,\bar{b}) \}$ and $\mathbf{1}_A(x,\bar{y})$ be the characteristic function of $A$. Then by Fubini (which is applicable for products of probability measures), $$\mu^{m+1}(A)=\int_{X^{m+1}}\mathbf{1}_A(x,\bar{y})\,d\mu^{m+1}(x,\bar{y})=\int_{X^{m}}\left(\int\mathbf{1}_A(x,\bar{y})\,d\mu(x)\right)\,d\mu^{m}(\bar{y})=0.$$
\end{proof}

\begin{thm}\label{thm:almostsureindependent}
Suppose $\mu$ is a probability measure on $X$ compatible with $ M$ and such that $\mathcal{L}$ is countable and thickness is definable in $ M$. Then the set \[\{ (x_n)_n \in X^{\mathbb{N}} : \{x_n:n\in\NN\} \text{ is dense independent}\}\] is (measurable and) $\mu^{\mathbb{N}}$-conull.
\end{thm}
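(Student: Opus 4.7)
The plan is to treat the two parts of the conclusion --- density and independence --- separately, show that each fails on a $\mu^{\NN}$-null set, and take the union of those null sets. Measurability of all relevant events is routine, since each depends only on countably many coordinates and, by compatibility, every definable subset of $X^k$ is $\mu^k$-measurable.

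\textbf{Density.} Using separability of $X$ together with the $\mathcal{L}$-definability of $d(x,y)<1$ (and, in the applications, a broader definable ball structure inside $M$), I would extract a countable family $\{U_k\}_{k\in\NN}$ of non-empty definable open subsets of $X$ forming a base for the topology. Each such $U_k$ is definable and contains an open ball, so by compatibility $\mu(U_k)>0$. Since the $x_n$ are i.i.d.\ $\mu$, the event $\{\forall n,\ x_n\notin U_k\}$ has probability $\prod_n (1-\mu(U_k))=0$, and a countable union over $k$ is still null. Outside this null set $\{x_n\}$ meets every $U_k$ and is therefore dense.

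\textbf{Independence.} Fix an $\mathcal{L}$-formula $\varphi(x,\bar{y})$ with $|x|=n$, $|\bar{y}|=mn$, and pairwise distinct indices $k, i_1,\ldots, i_m\in\NN$. The associated bad event is
\[
B_{\varphi,k,\bar{i}} := \{(x_n) : M\vDash\varphi(x_k,x_{i_1},\ldots,x_{i_m}) \text{ and } \varphi(X,x_{i_1},\ldots,x_{i_m}) \text{ is thin}\}.
\]
By definability of thickness, choose $\psi(\bar{y})$ with $M\vDash\psi(\bar{b})$ iff $\varphi(X,\bar{b})$ is thick, and set $\varphi'(x,\bar{y}) := \varphi(x,\bar{y})\wedge\neg\psi(\bar{y})$. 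Then $\varphi'(X,\bar{b})$ is thin for every $\bar{b}$ --- it equals $\varphi(X,\bar{b})$ when the latter is thin, and is empty otherwise. Applying \cref{cla:thin sets get measure zero} to $\varphi'$ yields $\mu^{m+1}(\{(a,\bar{b}):M\vDash\varphi'(a,\bar{b})\})=0$. Because $k, i_1,\ldots, i_m$ are distinct, the joint law of $(x_k, x_{i_1},\ldots, x_{i_m})$ under $\mu^{\NN}$ is exactly $\mu^{m+1}$, so $\mu^{\NN}(B_{\varphi,k,\bar{i}})=0$. The full failure-of-independence event is the countable union of these $B_{\varphi,k,\bar{i}}$ over the countably many $\varphi$ (using $|\mathcal{L}|\leq\aleph_0$), the countably many $m$, and the countably many valid index tuples, hence it is null.

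The most delicate step is density: from the minimal hypothesis that only $d(x,y)<1$ is definable, one does not automatically obtain a countable base of definable thick open sets, so this needs a small additional observation using the defining structure actually carried by $M$ in applications (e.g.\ an o-minimal expansion of $\RR$, the circle setup of \cref{sec:circle}, or Urysohn space). Granted a base of this form, the argument is a direct combination of \cref{cla:thin sets get measure zero} with the independence of the i.i.d.\ sample, and intersecting the two conull sets produced above gives the theorem.
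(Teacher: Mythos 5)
Your proof is essentially the paper's: the same decomposition into a density part and an independence part, and the independence argument (replace $\varphi$ by $\varphi\wedge\neg\psi$ using definability of thickness so that the result is uniformly thin, apply \cref{cla:thin sets get measure zero}, and take a countable union over formulas and index tuples using $|\mathcal{L}|\leq\aleph_0$) matches the paper's word for word. For density the paper is terser than you: it fixes a countable base of open balls $(D_i)_i$ and immediately asserts $\mu(D_i)>0$ ``by our assumptions on $\mu$,'' i.e.\ it reads the compatibility hypothesis as strict positivity of $\mu$ on all open balls (this is also how the introduction phrases it). Your more cautious route --- demanding a countable base of \emph{definable} thick open sets so that compatibility literally applies --- is one way to justify that step, and it does go through in the paper's applications (o-minimal, circle, etc., where definable small balls are plentiful); but the delicacy you flag is one the paper itself glosses over rather than a defect in your argument relative to theirs.
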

\begin{proof}
Let $\Ind:=\{ (x_n)_n \in X^{\mathbb{N}} : \{x_n:n\in\NN\} \text{ is independent}\}$ and $\mbox{Dense}:=\{ (x_n)_n \in X^{\mathbb{N}} : \{x_n:n\in\NN\} \text{ is dense}\}$ It suffices to show that $\mu^\NN(\Ind)=\mu^{\NN}(\mbox{Dense})=1$.

Let $(D_i)_{i\in\NN}$ be a countable basis of balls in $X$. By our assumptions on $\mu$ we have $\mu(D_i)>0$. Then $\{x_n:n\in \NN\}$ being dense is equivalent to ``for all $D_i$ there is some $x_{n_i}\in D_i$''. Hence it is Borel and
\begin{align*}
 \mu^{\NN}(\mbox{Dense})&\geq 1-\sum_{i\in\NN}\mu^{\NN}(\{(x_n)_n\in X^\NN: x_n\not\in D_i, \text{ for all }n\in\NN\})\\
 &=1-\sum_{i\in\NN}\left(\lim_{n\to\infty}(1-\mu(D_i))^n\right)=1.
\end{align*}

Let $\Delta:=\{\phi(x,\bar{y}): \text{for all }\bar{b}\in X^{m}, \phi(X,\bar{b})\text{ is thin}\}$. Since thickness is definable in $ M$, for any $\varphi(x,\bar{y})$, we may define $\phi(x,\bar{y}):=\varphi(x,\bar{y})\land\neg\psi(\bar{y})$ where $\psi(\bar{y})$ holds iff $\varphi(M^n,\bar{y})$ is thick; it follows that $\phi\in \Delta$. As $X$ has no isolated points, the formula $x=y$ is 
 in $\Delta$. Hence, to say that $\{x_n:n\in \NN\}$ is not independent is equivalent to saying that for some $\phi(x,\bar{y})\in \Delta$, $\phi(x_{i_0},\ldots, x_{i_m})$ holds. As $\mu$ is compatible with $M$ and $\mathcal{L}$ is countable, this set is Borel. Thus, \begin{align*}
&\mu^{\NN}(\Ind)=1-\mu^{\NN}(
    X^{\NN}\setminus\Ind)\\
    &\geq 1-\mu^{\NN}(\bigcup_{ \phi(x,\bar{y})\in\Delta, (i_0,\ldots,i_m)\in \NN^{m+1}}\{(x_n)_n \in X^{\mathbb{N}}:M\vDash\phi(x_{i_0},x_{i_1},\ldots,x_{i_m})\})\\
    &\geq 1-\sum_{\phi(x,\bar{y})\in\Delta, (i_0,\ldots,i_m)\in \NN^{m+1}}\mu^{m+1}(\{(a,\bar{b}) \in X^{m+1} : M \vDash \phi(a,\bar{b})\})\\
    &=1,
\end{align*} where the last equality is by \cref{cla:thin sets get measure zero}.
\end{proof}

With the same assumptions as above, let $(x_n)_{n \in \NN}$ 
be a sequence of points from $X$ be chosen independently uniformly according to $\mu$ at random. 

By the above, almost surely $V:=\{x_n : n\in \NN \}$ is an independent dense set. 
We make $V$ into a graph $\mathcal{G}_{V,p}$ with vertex set $V$ and put an edge between $x_i$ and $x_j$ for $d(x_i,x_j)<1$ with probability $0<p<1$ independently. Then by \cite[Lemma 2.1]{BJ}, almost surely $\mathcal{G}_{V,p}$ is a g.e.c. graph. Hence by \cref{thm:elementary equivalence for dense independent sets}, there is a complete first-order theory $T$ such that almost surely $\mathcal{G}_{V,p} \vDash T$. 

\begin{remark} \label{rem:Alex2}
    As we mentioned in \cref{rem:Alex Kruckman}, a variant of this result has already been obtained in \cite[Theorem 1.4.8, Example 2.1.1]{KruckmanThesis} and \cite[Example 3.1]{ackerman2017properly}. On one hand, there, the elementary equivalence was proved for any countable fragment in the infinitary logic $L_{\omega_1,\omega}$ and they do not require existence of defining structures with which the measure is compatible. On the other hand, the result there depends on the choice of measure and a priori gives a different theory for any choice of measure (which is not the case in our result, as long as the measure is compatible with the defining structure). Additionally, \cref{thm:elementary equivalence for dense independent sets} can be applied to metric spaces without a natural measure associated to it. For example, in \cref{sec:Urysohn}, we will discuss geometric random graphs on the Urysohn space, where it is not clear how to construct a probability measure which assigns positive measures to non-empty balls.
\end{remark}

\subsection{Riemmanian submanifold of \texorpdfstring{$\mathbb{R}^n$}{Rn}}
In this subsection, we will discuss a class of metric spaces for which there is a natural defining structure where thickness is definable. The class contains Riemmanian submanifolds definable in some o-minimal expansion the real field, and in general any metric space definable in some o-minimal structure such that definable sets which contain non-empty open balls are exactly those of maximal o-minimal dimension. 

\begin{defn}[o-minimal structures]
Let $ M=(R,<,\cdots)$ be an expansion of a totally ordered structure. $M$ is called o-minimal if every definable subset of $ M$ is a finite union of points and open intervals whose endpoints lie in $ M\cup\{\pm\infty\}$. $M$ is called an \emph{o-minimal expansion of the real field} if in addition $M=(\RR,+,\cdot,0,1,<,\ldots)$ is an expansion of the field of real numbers.
\end{defn}

  \begin{fact}(see \cite{vandenDries})
  The following structures are o-minimal:
\begin{itemize}
    \item 
    $\bar{\RR}:=(\RR,+,\cdot,0,1,<)$.
    \item
    $\RR_{an}:$ the field of real numbers expanded by all restricted analytic functions on $[-1,1]^n, n\in \NN$;
    \item
    $\RR_{an,exp}:$ the expansion of $\RR_{an}$ by the function $x\mapsto \exp{(x)}$.
\end{itemize}
  \end{fact}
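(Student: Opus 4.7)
The plan is to handle each of the three structures in turn, with progressively more involved techniques; each case ultimately reduces to showing that every definable subset of $\RR$ in the appropriate language is a finite union of points and intervals, which is the definition of o-minimality.

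For $\bar{\RR}$, I would invoke the Tarski--Seidenberg theorem: the theory of real closed fields admits quantifier elimination in the language $\{+,\cdot,0,1,<\}$. A quantifier-free formula in one free variable then defines a finite Boolean combination of sets of the form $\{x : p(x) > 0\}$ with $p \in \RR[x]$, and such a set is a finite union of open intervals by elementary analysis of the sign changes of $p$ between its (finitely many) real roots. Taking Boolean combinations preserves this structure, delivering o-minimality.

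For $\RR_{an}$, the plan is to follow Denef--van den Dries. I would enrich the language by a symbol for the (totalized) multiplicative inverse and establish quantifier elimination in this richer language. The engine is the Weierstrass preparation theorem for restricted analytic functions: after a suitable finite definable subdivision of the parameter space, every restricted analytic expression in the variable to be eliminated factors as an analytic unit times a polynomial in that variable, at which point polynomial elimination as in the Tarski case completes the argument. The subtle step is uniformity in parameters, which is precisely the analytic content provided by Weierstrass preparation.

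For $\RR_{an,\exp}$, the crucial and hardest input is Wilkie's theorem that $\RR_{\exp}$ is model complete, hence o-minimal (combined with van den Dries's criterion that model completeness plus finiteness for definable subsets of $\RR$ yields o-minimality). Wilkie's proof uses Khovanski\u{\i}'s theory of fewnomials to bound the number of connected components of sets defined by exponential-polynomial equations. The extension to $\RR_{an,\exp}$ then combines Wilkie's argument with the Denef--van den Dries machinery for restricted analytic functions, as carried out by van den Dries--Macintyre--Marker and van den Dries--Miller. The main obstacle is precisely the transcendental nature of $\exp$: unlike the semi-algebraic or restricted analytic settings, there is no quantifier elimination in the natural language, and the model-complete axiomatization has to be discovered indirectly via Khovanski\u{\i}-type topological bounds, which is the deepest ingredient in the three-part statement.
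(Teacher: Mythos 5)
The paper does not prove this Fact at all; it is labeled a \emph{Fact} and dispatched with a bare citation to van den Dries's book, so there is no proof here to compare yours against. What you have written is an accurate summary of how those results are actually established in the literature: Tarski--Seidenberg quantifier elimination for $\bar{\RR}$, the Denef--van den Dries quantifier elimination (in the language with restricted division, using Weierstrass preparation) for $\RR_{an}$, and for $\RR_{an,\exp}$ the combination of Wilkie's model completeness with Khovanski\u{\i}-type finiteness and the van den Dries--Macintyre--Marker / van den Dries--Miller extensions. Two minor remarks: in the $\bar{\RR}$ case one should also allow sets of the form $\{x : p(x) = 0\}$ (not only $p > 0$), though this costs nothing since these contribute finitely many points; and the transition from model completeness of $\RR_{\exp}$ to o-minimality rests specifically on the finiteness of connected components of existentially definable subsets of $\RR$, which is the precise role of Khovanski\u{\i}'s theorem, as you correctly indicate. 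Your sketch is at the right level of detail and correctly identifies the genuinely hard ingredients; the paper simply chooses to cite rather than reproduce them.
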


\begin{defn}[o-minimal dimension]
    Let $M=(R,<,\ldots)$ be an o-minimal structure and $X\subseteq R^n$ be a definable set. Define the \emph{o-minimal dimension of $X$}, $\dim(X)$, to be the maximal $m\leq n$ satisfying there exists a coordinate projection $\pi:R^n\to R^m$ such that $\pi(X)$ contains a non-empty open subset of $R^m$.
\end{defn}

\begin{fact}(\cite[Chapter 4, Proposition 1.5]{VDD}) \label{fac:o-minimal->dimension is definable}
Let $M$ be an o-minimal structure. Then the o-minimal dimension is definable in the following sense. For any formula $\phi(x;y)$ there are formulas $\{\psi_i(y),0\leq i\leq |x|\}$ such that for any $b\in M^{|y|}$, $\dim(\phi( M,b))=i$ iff $ M\vDash \psi_i(b)$. 
\end{fact}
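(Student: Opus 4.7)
The plan is to read off the definability of the dimension function $b \mapsto \dim(\phi(M,b))$ directly from the definition given in the paper, which characterizes dimension via coordinate projections. Given $\phi(x;y)$ with $|x|=n$, recall that $\dim(\phi(M,b)) = m$ iff $m$ is the largest integer so that some coordinate projection $\pi\colon M^n \to M^m$ sends $\phi(M,b)$ to a set containing a nonempty open subset. Both the existential quantifier over coordinate projections (which is really a finite disjunction) and the property ``contains a nonempty open subset'' can be expressed uniformly in the parameters $y$, with no outside parameters.

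To carry this out, for each $I\subseteq\{1,\dots,n\}$ with $|I|=m$, writing $z_I$ and $z_{I^c}$ for the coordinates inside and outside $I$, the projected fiber $\pi_I(\phi(M,b))$ is uniformly defined by the formula $(\exists z_{I^c})\,\phi(z_1,\dots,z_n;y)$ in variables $z_I$ with parameter $y$. Since $M$ is a totally ordered structure and open boxes form a basis of the order topology on $M^m$, a definable set $S(\bar z)\subseteq M^m$ contains a nonempty open subset iff it contains an open box, which is captured by the parameter-free formula
\[
\chi(S) \;:=\; \exists \bar u\,\bar v \Bigl( \bigwedge_{j\le m} u_j<v_j \,\wedge\, \forall \bar z \Bigl(\bigwedge_{j\le m}(u_j<z_j<v_j) \to S(\bar z)\Bigr)\Bigr).
\]
Combining these, define the parameter-free formula $\theta_m(y) := \bigvee_{|I|=m}\chi\bigl(\pi_I\phi(\cdot;y)\bigr)$, which expresses $\dim(\phi(M,y)) \ge m$. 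The desired formulas are then $\psi_i(y) := \theta_i(y)\wedge \neg\theta_{i+1}(y)$ for $0<i<n$, together with $\psi_n(y) := \theta_n(y)$ and $\psi_0(y) := (\exists\bar z\,\phi(\bar z;y))\wedge \neg\theta_1(y)$ to handle the boundary cases. Note that when $\phi(M,b)$ is empty, $\theta_m$ fails for all $m\ge 1$ and the extra conjunct $\exists\bar z\,\phi(\bar z;y)$ in $\psi_0$ ensures no $\psi_i$ fires, which matches the convention that dimension is only assigned to nonempty sets.

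The main conceptual point (rather than any hard obstacle) is verifying that the notion of ``open'' in the definition of dimension is the order topology, so that containing a nonempty open set is equivalent to containing an open box — which is what makes $\chi$ first-order in $M$. Interestingly, o-minimality itself is not actually invoked in establishing the definability claim once the definition of dimension has been phrased via projections; what o-minimality contributes is that this projection-based definition coincides with the cell-decomposition dimension and so is a well-behaved invariant of definable sets. An alternative approach is to invoke the uniform cell decomposition theorem to obtain a definable partition of the parameter space $M^{|y|}$ over which the fiber dimensions are constant, and then take unions of those pieces; this yields the same $\psi_i$ more abstractly, but the direct argument above is shorter.
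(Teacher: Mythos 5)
The paper does not prove this statement --- it is cited as a Fact from van den Dries's book (\cite[Chapter 4, Proposition 1.5]{VDD}) --- so there is no in-paper argument to compare against. That said, your proposal is essentially correct, and it is the natural ``direct'' proof given the projection-based definition of $\dim$ that the paper states. Encoding each coordinate projection $\pi_I$ as $(\exists z_{I^c})\,\phi$, expressing ``contains a nonempty open box'' by a parameter-free formula, and taking the finite disjunction over $I$ with $|I|=m$ yields $\theta_m$ expressing $\dim \ge m$; the $\psi_i$ follow by differencing, with your separate treatment of $\psi_0$ (requiring nonemptiness) handling the convention $\dim(\emptyset)=-\infty$. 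Your remark that o-minimality plays no role in the \emph{definability} of this projection-based quantity is accurate; what o-minimality guarantees is that this quantity behaves like a dimension and agrees with cell-decomposition dimension, and van den Dries's own route to this proposition goes through cell decomposition (which your ``alternative approach'' paragraph correctly sketches), whereas yours bypasses it.

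One small point worth tightening: your $\chi$ asserts $\exists \bar u\,\bar v$ with $u_j < v_j$ and the box contained in $S$, but does not explicitly assert that the box is nonempty. In a dense order this is automatic from $u_j<v_j$, and o-minimal structures in the sense of van den Dries are densely ordered without endpoints, so your formula is fine in context; if one wanted the argument to work verbatim for an arbitrary totally ordered structure, one should add a conjunct $\exists \bar z\,\bigwedge_j (u_j<z_j<v_j)$. This is the only place where the ambient hypotheses are quietly used, and it is worth flagging given your observation that o-minimality itself is not invoked.
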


From \cref{fac:o-minimal->dimension is definable} we immediately get:

\begin{prop}\label{prop:o-minimal}
Let $(X,d)$ be a separable metric space without isolated points.
    Suppose that $M$ is an o-minimal structure such that $X \subseteq M^n$ is a definable set and the relation $d(x,y)<1$ is definable in $M$. Suppose that $\dim(X)=k$ and that a definable subset $Y\subseteq X$ is thick iff $\dim(Y)=k$. Then $M$ is a defining structure for $(X,d)$ in which thickness is definable. 
\end{prop}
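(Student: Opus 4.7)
The plan is to verify the two clauses of the definition of a defining structure and of thickness being definable in turn, with essentially all the mathematical content being supplied by \cref{fac:o-minimal->dimension is definable} on the definability of o-minimal dimension.

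First, I would observe that $M$ is already a defining structure for $(X,d)$ essentially by the hypotheses: the fact that $X \subseteq M^n$ is $\mathcal{L}$-definable and the relation $d(x,y)<1$ is $\mathcal{L}$-definable without parameters are exactly what \cref{def:defining structure and ind set} demands. No further argument is needed here.

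For the main content --- that thickness is definable in $M$ --- I would proceed as follows. Fix a formula $\varphi(x,y)$ with $|x|=n$ (so the variables $x$ range over $M^n$) and $|y|=m$. Letting $\chi(x)$ be a formula defining $X \subseteq M^n$, I would replace $\varphi$ by $\varphi'(x,y) := \varphi(x,y)\wedge \chi(x)$. Then for every $\bar{b} \in X^m$ we have $\varphi'(M^n,\bar{b}) = \varphi(X,\bar{b})$ as subsets of $M^n$. By the standing hypothesis that a definable $Y\subseteq X$ is thick iff $\dim(Y)=k$, the set $\varphi(X,\bar{b})$ is thick precisely when $\dim(\varphi'(M^n,\bar{b}))=k$. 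By \cref{fac:o-minimal->dimension is definable} applied to $\varphi'$, there is a formula $\psi_k(y)$ without parameters such that $M \models \psi_k(\bar{b})$ iff $\dim(\varphi'(M^n,\bar{b}))=k$. This $\psi_k$ is then the desired witness for $\varphi$, and since $\varphi$ was arbitrary, thickness is definable in $M$.

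I do not anticipate any serious obstacle: the proposition is really a translation of \cref{fac:o-minimal->dimension is definable} through the definitions. The only minor point that needs care is the shift from ``contains an open ball in $X$'' (the intrinsic notion of thickness) to ``has maximal ambient o-minimal dimension in $M^n$''; but this translation is exactly what the hypothesis of the proposition provides, so the argument reduces to assembling the pieces.
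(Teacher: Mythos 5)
Your proof is correct and follows essentially the same route as the paper, which simply asserts that the proposition follows immediately from \cref{fac:o-minimal->dimension is definable}; you have merely spelled out the (routine) intersection with the formula defining $X$ and the translation from ``contains an open ball in $X$'' to ``has o-minimal dimension $k$,'' both of which are exactly what the hypotheses supply.
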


Our chief examples are Riemannian submanifodls\footnote{See \cite{Lee} for more on Riemannian manifolds.} of $\mathbb{R}^n$, e.g., spheres.

\begin{prop}\label{prop:O-minDefiningStructure}
Let $(X,g)$ be a Riemannian submanifold of $\RR^n$ of dimension $m$.
Assume that $X\subseteq \RR^n$ is a definable set in some o-minimal expansion $\mathcal{R}$ in a countable language of the real field (e.g., a countable reduct of $\RR_{an,exp}$),
which also defines the relation $d(x,y)<1$ for $x,y \in X$ where $d=d_g:X \times X \to X$ is the geodesic distance. 
Then thickness is definable in $\mathcal{R}$. And if $\mu$ is a probability measure equivalent to the $g$-induced volume measure $V_g$ on $X$\footnote{That is, $\mu(A) > 0$ iff $V_g(A)>0$ for every Borel subset $A \subseteq X$.}, then $\mu$ is compatible with $\mathcal{R}$.



\end{prop}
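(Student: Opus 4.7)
My plan is to reduce both assertions to o-minimal dimension theory by invoking \cref{prop:o-minimal}. Since $X$ is an $m$-dimensional $C^1$-submanifold of $\RR^n$ definable in $\mathcal{R}$, its o-minimal dimension equals $m$. To apply \cref{prop:o-minimal} for the first assertion (definability of thickness), I would show that for any definable $Y \subseteq X$, the set $Y$ is thick in $(X,d_g)$ if and only if $\dim(Y) = m$.

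The geodesic distance $d_g$ on a Riemannian submanifold of $\RR^n$ induces the manifold topology on $X$, which coincides with the subspace topology inherited from $\RR^n$; hence \emph{thick} is equivalent to \emph{having non-empty interior in $X$}. For the main equivalence, I would invoke definable $C^1$-cell decomposition for $X$: every definable subset admits a finite partition into definable $C^1$-cells, and each top-dimensional cell $C \subseteq X$ is realized, after a suitable coordinate projection $\pi : \RR^n \to \RR^m$, as a definable $C^1$-diffeomorphism onto an open subset of $\RR^m$. If $\dim(Y) = m$, then $Y$ meets some such $m$-cell $C$ in a subset of dimension $m$; its image under the chart is a definable subset of $\RR^m$ of dimension $m$, hence has non-empty interior in $\RR^m$ by cell decomposition, which pulls back to a non-empty interior of $Y$ in $X$. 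The converse is immediate, since any non-empty open subset of the manifold $X$ has dimension $m$.

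For the compatibility of $\mu$ with $\mathcal{R}$, I first note that every definable subset of $X^n$ is Borel, since every cell is definably $C^1$-homeomorphic to an open subset of some $\RR^k$ via a coordinate projection; consequently the product measure $\mu^n$ is defined on all $\mathcal{L}$-definable subsets of $X^n$. It remains to show that $\mu(D) = 0$ iff $D$ is thin, for a definable $D \subseteq X$. If $D$ is thick, then $D$ contains a non-empty $d_g$-open ball, which has strictly positive Riemannian volume, so $V_g(D) > 0$ and hence $\mu(D) > 0$ by equivalence of the measures. If $D$ is thin, then $\dim(D) < m$ by the equivalence established above, so $D$ decomposes into finitely many $C^1$-cells each of dimension strictly less than $m$ sitting inside the $m$-dimensional Riemannian manifold $X$; each such cell, being a $C^1$-image of an open subset of some $\RR^k$ with $k<m$, has $V_g$-measure zero, whence $V_g(D) = 0$ and so $\mu(D) = 0$.

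The main technical point is the availability of definable $C^1$-charts on $X$ coming from cell decomposition. This rests on the $C^k$-cell decomposition theorem in o-minimal expansions of the real field, together with the hypothesis that $X$ is a definable $C^1$-submanifold, ensuring that top-dimensional cells of $X$ are graphs of definable $C^1$-functions over coordinate projections and hence truly serve as local charts in which dimensions can be compared with those of definable subsets of $\RR^m$. Granting this, the remaining arguments are routine applications of o-minimal dimension theory and basic Riemannian geometry.
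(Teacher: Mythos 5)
Your proposal is correct and follows essentially the same approach as the paper: both arguments reduce to \cref{prop:o-minimal} by showing that a definable $D\subseteq X$ is thick iff $\dim(D)=m$, and then handle compatibility of $\mu$ via the same dimension dichotomy. The only cosmetic difference is that the paper cites \cite[Chapter 4, Corollary 1.9]{VDD} directly for the implication $\dim(D)=m\Rightarrow D$ has nonempty interior (and passes to $\cl(D)$ before covering by a finite atlas), whereas you rederive the same facts from definable $C^1$-cell decomposition and the observation that $C^1$-images of open subsets of $\RR^k$ with $k<m$ are $V_g$-null.
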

\begin{proof}
To show that that thickness is definable, by \cref{prop:o-minimal}, it is enough to show that for any definable $D \subseteq X$, $D$ is thick iff $\dim(D)=m$. We will use the fact that in $\mathcal{R}$, the o-minimal dimension coincides with the manifold dimension for any definable submanifold of $\RR^n$. Suppose that $D$ is thick. Then it contains an open ball in $X$. This open ball $U$ is itself a submanfiold of $\RR^n$ of dimension $m$, and hence $m=\dim(U)\leq\dim(D)\leq\dim(X)=m$ as required. On the other hand, if $\dim(D)=m$, By \cite[Chapter 4, Cororllary 1.9]{VDD}, $\dim(D) = \dim(X)$ implies that $D$ contains a nonempty open subset in $X$, and thus thick.

It remains to show that $\mu$ is compatible with $\mathcal{R}$. By o-minimality (i.e., cell-decomposition, see \cite[Chapter 3]{vandenDries}), $\mathcal{R}$-definable sets are finite union of locally closed sets, thus Borel. In particular, definable subsets of $X^k$ are $\mu^k$-measurable. We show the second part of the definition, i.e., that $\mu(D)>0$ iff $D$ is thick. For the implication from right to left note that any open set gets positive measure (since this is true for the volume measure). On the other hand, assume that $D$ is not thick. As $\dim(D) = \dim(\cl (D))$, we may assume that $D$ is closed. As $X$ is definable it has a finite atlas, and in each part the volume measure of $D$ is 0 (since it is a closed set of smaller dimension), and hence the measure of $D$ is 0.
\end{proof}

\section{How to detect volumes}\label{sec:Volumn}
As shown by \cite{AS}, in the case of circle $\Ss{L}$, the length $L$ is an information that can be recovered from any g.e.c.\ graph on a dense subset of $\Ss{L}$. It is not hard to see that this information is coded in the first-order theory $T_{\mathcal{L}_G}(\Ss{L})$ of these graphs. In this section we present a generalization of this result.

The key idea in \cite{AS} is to associate a real number $\alpha(G)$ to any graph and interpret the meaning of $\alpha(G)$ in terms of the underlying metric space $\Ss{L}$ when $G$ is a geometric random graph in $\Ss{L}$. We will follow their approach.

\begin{defn}
Let $G=(V,E)$ be a graph, define $$\alpha(G):=\inf_{U\subseteq_{\text{fin}} V }\sup_{v\in V}\frac{|N_1(v)\cap U|}{|U|} \in [0,1],$$ where $U\subseteq_{\text{fin}} V$ means $U$ is a finite subset of $V$.
\end{defn}

We first show that $\alpha(G)$ is coded in the theory of $G$.

\begin{lem}
There is a countable set of sentences $\{\phi_{m,n}:m\leq n\}$ in the graph language $\mathcal{L}_G$ such that the following holds for any graph $G$: $$\alpha(G)=\inf\{m/n: G\vDash \phi_{m,n}\}.$$ Therefore, if two graphs have different values for $\alpha(G)$, then they cannot have the same first-order theory. 
\end{lem}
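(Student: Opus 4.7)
The plan is to let $\phi_{m,n}$ be the first-order $\mathcal{L}_G$-sentence expressing ``there exist $n$ distinct vertices such that no vertex has more than $m$ of them as neighbors.'' Explicitly,
\[
\phi_{m,n} \;:=\; \exists u_1 \cdots \exists u_n\Bigl(\bigwedge_{i<j\leq n} u_i\neq u_j \;\wedge\; \forall v \bigwedge_{\substack{T\subseteq\{1,\dots,n\}\\|T|=m+1}} \bigvee_{i\in T}\neg(v \mathrel{E} u_i)\Bigr).
\]
Since $N_1(v)$ is the open neighborhood of $v$ in $G$, the sentence $\phi_{m,n}$ holds in $G=(V,E)$ if and only if there is a set $U\subseteq V$ with $|U|=n$ satisfying $\max_{v\in V}|N_1(v)\cap U|\leq m$, equivalently $\sup_{v\in V} |N_1(v)\cap U|/|U|\leq m/n$.

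Set $\beta(G) := \inf\{m/n : G\vDash\phi_{m,n}\}$. The inequality $\alpha(G)\leq\beta(G)$ is immediate: any witness $U$ to $\phi_{m,n}$ directly shows $\inf_{U'}\sup_v |N_1(v)\cap U'|/|U'| \leq m/n$. For the reverse, given $\epsilon>0$, use the definition of $\alpha(G)$ to pick a finite $U\subseteq V$ with $\sup_v |N_1(v)\cap U|/|U| < \alpha(G)+\epsilon$. The essential routine observation is that the function $v\mapsto|N_1(v)\cap U|$ is integer-valued and bounded by $|U|$, so its supremum over $V$ is actually attained as a maximum $m\in\{0,\ldots,|U|\}$; setting $n=|U|$, we obtain $G\vDash\phi_{m,n}$ with $m/n<\alpha(G)+\epsilon$, whence $\beta(G)<\alpha(G)+\epsilon$. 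Letting $\epsilon\to 0$ gives $\beta(G)\leq\alpha(G)$ and closes the proof of the identity $\alpha(G)=\beta(G)$.

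For the final clause, suppose $\alpha(G_1)<\alpha(G_2)$ and choose any $r\in(\alpha(G_1),\alpha(G_2))$. Since $\alpha(G_1)=\beta(G_1)<r$, there exist $m,n$ with $G_1\vDash\phi_{m,n}$ and $m/n<r$. If $G_2$ also satisfied $\phi_{m,n}$, we would have $\alpha(G_2)=\beta(G_2)\leq m/n<r<\alpha(G_2)$, a contradiction; hence $\phi_{m,n}$ separates the two theories, so $\Th(G_1)\neq\Th(G_2)$.

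The argument is essentially bookkeeping; the only conceptual point is the passage from a real-valued supremum (over $v$) to an attained integer maximum, which is what lets us turn an arbitrary real approximant to $\alpha(G)$ into a concrete rational $m/n$ named by a single sentence $\phi_{m,n}$.
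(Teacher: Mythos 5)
Your proof is correct and takes essentially the same route as the paper: you define $\phi_{m,n}$ identically (a set $U$ of size $n$ with $\sup_v |N_1(v)\cap U|\leq m$), observe $\alpha(G)\leq\beta(G)$ immediately, and close the gap using the key observation that the integer-valued, bounded function $v\mapsto|N_1(v)\cap U|$ attains its supremum as an integer maximum. The only cosmetic difference is that the paper phrases the reverse inequality as a proof by contradiction (assuming $\alpha(G)<r$ and extracting a witnessing pair $m_0/n_0<r$ via a ceiling argument), whereas you argue directly via $\epsilon$-approximation; both hinge on the same point, and your explicit treatment of the ``therefore'' clause is a slight bonus.
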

\begin{proof}
Let $\phi_{m,n}$ be the sentence expressing that there is a set $U$ of size $n$ such that for any $v$ in the vertex set $V$, $|N_1(v)\cap U|\leq m$. By definition, if $G\vDash \phi_{m,n}$ then $\alpha(G)\leq m/n$, hence $\alpha(G)\leq \inf\{m/n: G\vDash \phi_{m,n}\}=:r$. We need to show $\alpha(G)=r$. Suppose towards a contradiction that $\alpha(G)<r$, then there is a finite $U_0\subseteq V$ of size $n_0$ for some $n_0>0$, such that $|N_1(v)\cap U_0|< rn_0$ for all $v\in V$. Let $m_0:=\lceil rn_0\rceil-1$, then $m_0<rn_0$ and $|N_1(v)\cap U_0|\leq m_0$ for all $v\in V$. Thus, $G\vDash \phi_{m_0,n_0}$ and $m_0/n_0<r=\inf\{m/n: G\vDash \phi_{m,n}\}$, a contradiction.
\end{proof}


\begin{defn}
Let $(X,d)$ be a metric space. We call $(X,d)$ \emph{uniform} if there is a \emph{uniformly distributed measure} $\mu$ on $(X,d)$, i.e., $\mu$ satisfies $0<\mu(B_r(x))=\mu(B_r(y))<\infty$ for all $r\in\mathbb{R}^{>0}$ and $x,y\in X$.
Suppose $(X,d)$ uniform with $\mu$ a uniformly distributed measure, define \[Vl_{\mu}(X):=\mu(X)/\mu(B_1(x))\in \mathbb{R}^{\geq 0}\cup\{\infty\}\] for some $x\in X$, and call it the \emph{ball volume} of $X$.
\end{defn}
\begin{remark}
 By the Christensen's lemma \cite{Ch70}, if the metric space $(X,d)$ is separable, then uniformly distributed Borel regular measures are unique up to scalar multiple. In this case $Vl_{\mu}(X)$ does not depend on $\mu$.   
\end{remark}

Examples of uniform metric spaces are $\mathbb{R}^{d}$ with the euclidean metric and the $n$-sphere $S^{n}$ with geodesic metric (witnessed by the spherical measure). Other examples include locally compact groups with a left-invariant metric (e.g., the torus) in which case the Haar measure is uniformly distributed. A non-example is the disc in $\mathbb{R}^{2}$ (see \cite[Proposition 2.4]{KP}).

We will further use the VC-theorem to approximate the uniformly distributed measure by finite sets. 

\begin{defn}[VC-dimension]
Let $X$ be a set and $\mathcal{F}\subseteq\mathcal{P}(X)$. We say that $A\subseteq X$ is shattered by $\mathcal{F}$ if for every $S\subseteq A$ there is $F\in\mathcal{F}$ such that $F\cap A=S$. The VC-dimension of $\mathcal{F}$ is the smallest integer $n$ such
that no subset of $X$ of size $n+1$ is shattered by $\mathcal{F}$. If no such $n$ exists, then we say $\mathcal{F}$ has infinite VC-dimension.
\end{defn}
For the next fact, see \cite[Chapter 9, Theorem 1]{Empirical} and \cite[Theorem 6.6 \& the discussion above Example 6.7]{PierreBook}.
\begin{fact}[Corollary of the VC-theorem]\label{fact-VC}
Let $(X,\mu)$ be a probability space. Suppose $\mathcal{S}$ is a countable family of measurable subsets of $X$ such that $\mathcal{S}$ is of finite VC-dimension. Then for any $\epsilon>0$, for $n$ large enough, there are $(x_1,\cdots,x_n)\in X^{n}$ such that $$\sup_{S\in\mathcal{S}}\left|\frac{1}{n}\sum_{i\leq n}\mathbf{1}_{S}(x_i)-\mu(S)\right|<\epsilon,$$ where $\mathbf{1}_S:X\to\{0,1\}$ is the characteristic function of $S$.
\end{fact}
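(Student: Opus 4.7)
The plan is to prove this by the standard Vapnik--Chervonenkis uniform convergence argument: show that an i.i.d. sample of size $n$ from $\mu$ already satisfies the required approximation with probability tending to $1$ as $n \to \infty$, and then extract an explicit tuple $(x_1,\dots,x_n)$ by pointing to any element of this event of positive probability. The assumption that $\mathcal{S}$ is countable is used only to avoid measurability worries, since all the suprema that arise below are taken over a countable family of measurable events.

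First I would let $X_1,\dots,X_n$ be i.i.d. samples of $\mu$ and write $\mu_n(S) = \tfrac{1}{n}\sum_{i\le n}\mathbf{1}_S(X_i)$, so that $\mathbb{E}\,\mu_n(S)=\mu(S)$. The key preliminary step is the classical symmetrization inequality: introducing an independent ``ghost'' sample $X_1',\dots,X_n'$ with empirical measure $\mu_n'$,
\[
\Pr\!\Bigl(\sup_{S\in\mathcal{S}}|\mu_n(S)-\mu(S)|>\epsilon\Bigr) \;\le\; 2\,\Pr\!\Bigl(\sup_{S\in\mathcal{S}}|\mu_n(S)-\mu_n'(S)|>\epsilon/2\Bigr),
\]
valid once $n$ is large enough that $\inf_S n\mu(S)(1-\mu(S))$ is controlled. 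After symmetrization, introduce Rademacher signs $\varepsilon_i\in\{\pm1\}$ by exchanging each pair $(X_i,X_i')$ independently with probability $\tfrac12$; the left-hand side is dominated by the probability that the Rademacher-averaged empirical process exceeds $\epsilon/2$.

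Second, I would condition on the combined $2n$ points $Z=(X_1,\dots,X_n,X_1',\dots,X_n')$. Then the supremum over $S\in\mathcal{S}$ depends only on the trace $S\cap Z$, and the Sauer--Shelah lemma (applied to the VC-dimension $d$ of $\mathcal{S}$) bounds the number of distinct traces by $\binom{2n}{\le d} \le (2n)^d+1$. For each fixed trace, Hoeffding's inequality on a sum of $\pm1$-valued Rademacher variables gives
\[
\Pr\!\Bigl(\Bigl|\tfrac{1}{n}\textstyle\sum_{i\le n}\varepsilon_i (\mathbf{1}_S(X_i)-\mathbf{1}_S(X_i'))\Bigr|>\epsilon/2 \;\Bigm|\; Z\Bigr) \;\le\; 2\exp(-n\epsilon^2/8).
\]
A union bound over the at most $(2n)^d+1$ traces and averaging in $Z$ yields the uniform Glivenko--Cantelli estimate
\[
\Pr\!\Bigl(\sup_{S\in\mathcal{S}}|\mu_n(S)-\mu(S)|>\epsilon\Bigr) \;\le\; 4\bigl((2n)^d+1\bigr)\exp(-n\epsilon^2/8),
\]
which tends to $0$ as $n\to\infty$ because the polynomial factor is beaten by the exponential.

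Finally, for any $\epsilon>0$ pick $n$ large enough that this probability is strictly less than $1$. Then the event that there exists an $n$-tuple $(x_1,\dots,x_n)\in X^n$ with $\sup_{S\in\mathcal{S}}|\mu_n(S)-\mu(S)|<\epsilon$ has positive $\mu^n$-measure, hence is nonempty, which produces the required tuple. The main technical obstacle is the symmetrization/Sauer--Shelah bookkeeping; once those classical tools are in place, the rest is a union bound plus a Hoeffding tail estimate. The countability of $\mathcal{S}$ plays no quantitative role but ensures every supremum above is a measurable function, avoiding the need for outer measures.
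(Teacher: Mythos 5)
The paper states this as a \emph{Fact} with citations (to \cite{Empirical} and \cite{PierreBook}) and gives no proof of its own, so there is no ``paper's proof'' to compare against; your argument is the canonical symmetrization--Sauer--Shelah--Hoeffding proof of the VC uniform convergence theorem, which is precisely what those sources contain. Your proof is correct, and the extraction of a deterministic tuple from an event of positive $\mu^n$-measure is the right way to convert the probabilistic statement into the existential one asked for. Two small nits worth tightening: the sufficient condition for the symmetrization inequality is simply $n\geq 2/\epsilon^2$, uniform over $S$, since one only uses the universal bound $\mu(S)(1-\mu(S))\leq 1/4$ in the Chebyshev step --- ``controlling $\inf_S n\mu(S)(1-\mu(S))$'' is not what is needed (and in fact one wants the variance small, i.e.\ an upper bound, not control of the infimum). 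Also, since the statement asks for strict inequality $<\epsilon$, one should apply the Glivenko--Cantelli estimate with some $\epsilon'<\epsilon$; this is cosmetic but worth noting.
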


\begin{lem}\label{lem-VC}
Let $(X,d)$ be a separable metric space and $\mu$ a uniformly distributed measure on $X$. Let $V$ be a countable dense subset of $X$. Suppose $\mu$ satisfies  $$\lim_{\epsilon\in \mathbb{Q}\to 0}|\mu(B_1(x))-\mu(B_{1+\epsilon}(x))|=0 \text{ for all (some) } x \in X$$ and the family $\mathcal{S}:=\{B_{1+\epsilon}(v):v\in V,\epsilon\in(-1,1)\cap\mathbb{Q}\}$ is of finite VC-dimension. Then there is a sequence $(U_n)_{n\in\mathbb{N}}$ of finite subsets of $V$ such that for any sequence $(v_i)_{i\in\mathbb{N}}\in V$, we have $$\lim_{i\to\infty}\frac{|B_1(v_i)\cap U_i|}{|U_i|}=\mu(B_1(x))$$ for any $x\in X$.
\end{lem}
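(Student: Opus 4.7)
The plan is to invoke the VC-theorem (\cref{fact-VC}) on the family $\mathcal{S}$ to produce, for each $n$, a finite sample in $X$ that uniformly approximates the $\mu$-measures of balls in $\mathcal{S}$, and then to perturb this sample into the dense set $V$, using the boundary-continuity hypothesis to control the error introduced by the perturbation. We may assume $\mu$ is a probability measure (otherwise rescale; the intended case has $\mu(X)<\infty$). Since $\mu$ is uniformly distributed, $r \mapsto \mu(B_r(x))$ depends only on $r$, so the continuity hypothesis at $r=1$ holds at every $x$ as soon as it holds at some $x$; set $\alpha := \mu(B_1(x))$, the common value.

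For each $n \in \NN$, first pick a rational $\delta_n \in (0,1)$ so small that $\mu(B_{1+\delta_n}(x)) - \mu(B_{1-\delta_n}(x)) < 1/(2n)$. Since $\mathcal{S}$ is countable and of finite VC-dimension, \cref{fact-VC} applied with tolerance $1/(4n)$ supplies $k_n \in \NN$ and points $x_1^n, \dots, x_{k_n}^n \in X$ with
\[
\sup_{S \in \mathcal{S}} \left| \frac{1}{k_n} \sum_{i=1}^{k_n} \mathbf{1}_S(x_i^n) - \mu(S) \right| < \frac{1}{4n}.
\]
Using density of $V$ together with the absence of isolated points in $X$ (so every nonempty open ball meets $V$ infinitely often), choose pairwise distinct $u_1^n, \dots, u_{k_n}^n \in V$ with $d(u_i^n, x_i^n) < \delta_n/2$, and set $U_n := \{u_1^n, \dots, u_{k_n}^n\}$.

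The verification is a sandwich estimate. For any $v \in V$ and any $i$, the triangle inequality combined with $d(u_i^n, x_i^n) < \delta_n/2$ gives the inclusions $\{x_i^n \in B_{1-\delta_n/2}(v)\} \subseteq \{u_i^n \in B_1(v)\} \subseteq \{x_i^n \in B_{1+\delta_n/2}(v)\}$, hence
\[
\mathbf{1}_{B_{1-\delta_n/2}(v)}(x_i^n) \;\leq\; \mathbf{1}_{B_1(v)}(u_i^n) \;\leq\; \mathbf{1}_{B_{1+\delta_n/2}(v)}(x_i^n).
\]
Averaging over $i$ and applying the VC approximation to $B_{1 \pm \delta_n/2}(v) \in \mathcal{S}$, together with the choice of $\delta_n$ (which forces $\mu(B_{1+\delta_n/2}(v))$ and $\mu(B_{1-\delta_n/2}(v))$ to lie within $1/(2n)$ of $\alpha$), yields
\[
\left| \frac{|B_1(v) \cap U_n|}{|U_n|} - \alpha \right| < \frac{1}{n}
\]
uniformly in $v \in V$. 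Since this bound is uniform, for any sequence $(v_i) \in V^{\NN}$ the ratio $|B_1(v_i) \cap U_i|/|U_i|$ converges to $\alpha$, which is exactly the stated conclusion.

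The main obstacle is transferring the VC-sample from $X$ into the dense set $V$. The perturbation step bridges this gap, but it would be vacuous without the boundary-continuity hypothesis: a small shift of $x_i^n$ can only flip its membership in $B_1(v)$ when $x_i^n$ lies in the shell $B_{1+\delta_n/2}(v) \setminus B_{1-\delta_n/2}(v)$, and it is precisely the assumption (coupled with uniform distribution of $\mu$, which makes the shell measure the same for all $v$) that lets us bound the $\mu$-measure of every such shell simultaneously.
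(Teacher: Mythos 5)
Your proof follows the same route as the paper's: apply \cref{fact-VC} to $\mathcal{S}$ to get a VC-sample in $X$, perturb it into the dense set $V$, and use the continuity-at-radius-one hypothesis together with a sandwich estimate via $B_{1\pm\delta}(v)$ to control the perturbation error. The only differences are cosmetic (explicit $1/n$, $1/(2n)$, $1/(4n)$ versus the paper's running $\epsilon$, a perturbation radius of $\delta_n/2$ versus $\delta$, and a somewhat more careful note about normalizing $\mu$), so this is the paper's proof in slightly different clothing.
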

\begin{proof}
Fix an $\epsilon>0$ and $x\in X$. Let $\delta\in\mathbb{Q}^{>0}$ be such that $|\mu(B_1(x))-\mu(B_{1\pm\delta}(x))|<\epsilon$ for any $x\in X$. Let $(x_1,\ldots,x_n)\in X^{n}$ given by \cref{fact-VC}, i.e., $\sup_{S\in\mathcal{S}}\left|\frac{1}{n}\sum_{i\leq n}\mathbf{1}_{S}(x_i)-\mu(S)\right|<\epsilon.$ Let $U_{\epsilon}$ be a set of distinct $x_1',x_2',\ldots,x_n'\in V$ such that $d(x_i,x_i')<\delta$ for $i\leq n$ (this is possible by the density of $V$; note that it is possible that $x_i=x_j$ for some $i\neq j$, but still we require that $x_i' \neq x_j'$). Note that by the triangle inequality $d(v,x_i)-\delta\leq d(v,x_i')\leq d(v,x_i)+\delta$ for all $v\in V$. Therefore, for any $v\in V$, $$ \frac{1}{n}\sum_{i\leq n}\mathbf{1}_{B_{1-\delta}(v)}(x_i)\leq \frac{|B_1(v)\cap U_\epsilon|}{|U_\epsilon|}\leq \frac{1}{n}\sum_{i\leq n}\mathbf{1}_{B_{1+\delta}(v)}(x_i).$$ By assumption $\frac{1}{n}\sum_{i\leq n}\mathbf{1}_{B_{1+\delta}(v)}(x_i)<\mu(B_{1+\delta}(v))+\epsilon<\mu(B_{1}(v))+2\epsilon=\mu(B_{1}(x))+2\epsilon$. Similarly, $\frac{1}{n}\sum_{i\leq n}\mathbf{1}_{B_{1-\delta}(v)}(x_i)>\mu(B_{1-\delta}(v))-\epsilon>\mu(B_{1}(x))-2\epsilon$. Therefore $$\sup_{v\in V}\left|\frac{|B_1(v)\cap U_\epsilon|}{|U_\epsilon|}-\mu(B_1(x))\right|<2\epsilon.$$ Finally, let $\epsilon_n := 1/n$ and $U_n:=U_{\epsilon_n}$.
\end{proof}

\begin{thm}\label{thm-volume}
Suppose $(X,d)$ is a separable uniform metric space with a uniformly distributed measure $\mu$ such that $\mu(X)<\infty$. Let $G$ be a g.e.c.\ graph on a dense subset of $X$. Suppose $\mu$ satisfies $$\lim_{\epsilon\in \mathbb{Q}\to 0}|\mu(B_1(x))-\mu(B_{1+\epsilon}(x))|=0$$ for any (some) $x\in V$, and the family $\mathcal{S}:=\{B_{1+\epsilon}(v):v\in V,\epsilon\in(-1,1)\cap\mathbb{Q}\}$ is of finite VC-dimension. Then $$\alpha(G)=1/Vl_{\mu}(X).$$
\end{thm}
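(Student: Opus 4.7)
The plan is to prove the two inequalities $\alpha(G)\le 1/Vl_{\mu}(X)$ and $\alpha(G)\ge 1/Vl_{\mu}(X)$ separately. Throughout, let $V\subseteq X$ denote the vertex set of $G$ and normalize $\mu$ to the probability measure $\tilde\mu:=\mu/\mu(X)$, which is again uniformly distributed, still satisfies the continuity and VC-dimension hypotheses, and obeys $\tilde\mu(B_1(x))=\mu(B_1(x))/\mu(X)=1/Vl_{\mu}(X)$.

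For the upper bound, I would apply \cref{lem-VC} to $\tilde\mu$ to produce finite sets $U_n\subseteq V$ for which
\[
\sup_{v\in V}\left|\frac{|B_1(v)\cap U_n|}{|U_n|}-\frac{1}{Vl_{\mu}(X)}\right|\to 0;
\]
the sequential formulation in \cref{lem-VC} amounts exactly to this uniform convergence in $v$, as otherwise one could extract a witnessing bad sequence $(v_i)$. Because $G$ has unit threshold, $N_1(v)\subseteq B_1(v)\cap V$, hence $|N_1(v)\cap U_n|\le|B_1(v)\cap U_n|$, and taking $\inf_n\sup_v$ yields $\alpha(G)\le 1/Vl_{\mu}(X)$.

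For the lower bound, fix any finite $U=\{u_1,\dots,u_n\}\subseteq V$. By Fubini and the uniform distribution of $\mu$,
\[
\int_X |U\cap B_1(x)|\,d\mu(x)=\sum_{i=1}^n \mu(\{x:d(x,u_i)<1\})=n\mu(B_1(u_1)),
\]
so the $\tilde\mu$-mean of $|U\cap B_1(x)|$ equals $n/Vl_{\mu}(X)$, and some $x\in X$ satisfies $|U\cap B_1(x)|\ge n/Vl_{\mu}(X)$. Set $A:=U\cap B_1(x)$; since $A$ is finite and $B_1(x)$ is open, there is $\delta>0$ with $A\subseteq B_{1-\delta}(x)$, and by density of $V$ we may pick $s\in V$ with $d(s,x)<\delta/2$, so that $A\subseteq B_1(s)$. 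Applying the g.e.c.\ condition at $s$ to the disjoint pair $(A,\emptyset)$ produces $v\in V$ adjacent to every element of $A$, whence $|N_1(v)\cap U|\ge|A|\ge n/Vl_{\mu}(X)$. Therefore $\sup_v|N_1(v)\cap U|/|U|\ge 1/Vl_{\mu}(X)$ for every finite $U$, giving $\alpha(G)\ge 1/Vl_{\mu}(X)$.

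I do not anticipate a serious obstacle. The lower bound reduces to averaging against a $\mu$-uniform kernel, whose value is exactly dictated by $Vl_{\mu}(X)$, followed by a single invocation of g.e.c.; the only care needed is the small separation $\delta>0$ from the boundary of $B_1(x)$, which is immediate from the finiteness of $A$. For the upper bound, the one subtlety is extracting uniform convergence in $v$ from \cref{lem-VC}. Conceptually, this is the natural generalization of the circle computation of \cite{AS}.
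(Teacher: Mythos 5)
Your proposal is correct and follows essentially the same two-step strategy as the paper: the upper bound via \cref{lem-VC} together with $N_1(v)\subseteq B_1(v)$, and the lower bound via an averaging/Fubini argument followed by a single application of g.e.c. Your treatment of the lower bound is if anything slightly cleaner than the paper's, since you only claim $|N_1(v)\cap U|\ge|A|$ (which is exactly what g.e.c.\ gives, after the $\delta$-separation from the boundary of $B_1(x)$), whereas the paper asserts the stronger-looking equality $N_1(v_0)\cap U=B_1(x_0)\cap U$ which is not needed for the inequality.
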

\begin{proof} Let $b:=\mu(B_1(X))$.
We may normalize $\mu$ and assume that $\mu(X)=1$, so that by definition we need to show that $\alpha(G) = b$. We first show that $\alpha(G)\leq b$. Let $\{U_n:n\in\mathbb{N}\}$ be a family of finite sets in $V$ given by \cref{lem-VC} (after normalising the measure of $X$ to be 1), i.e., for any sequence $(v_i)_{i\in\mathbb{N}}\in V$, we have $$\lim_{i\to\infty}\frac{|B_1(v_i)\cap U_i|}{|U_i|}=b.$$
Therefore, $$\alpha(G) = \inf_{U\subseteq_{\text{fin}} V }\sup_{v\in V}\frac{|N_1(v)\cap U|}{|U|} \leq \lowlim_{n\to\infty}\sup_{v\in V}\frac{|N_1(v)\cap U_n|}{|U_n|}\leq \lowlim_{n\to\infty}\sup_{v\in V}\frac{|B_1(v)\cap U_n|}{|U_n|}=b.$$ The last equality is because $\sup_{v\in V}\frac{|B_1(v)\cap U_n|}{|U_n|}=\max_{v\in V}\frac{|B_1(v)\cap U_n|}{|U_n|}=\frac{|B_1(v'_n)\cap U_n|}{|U_n|}$ for some $v_n'\in V$ and $$\lowlim_{n\to\infty}\sup_{v\in V}\frac{|B_1(v)\cap U_n|}{|U_n|}=\lowlim_{n\to\infty}\frac{|B_1(v'_n)\cap U_n|}{|U_n|}=\lim_{n\to\infty}\frac{|B_1(v'_n)\cap U_n|}{|U_n|}=b.$$ Together, $\alpha(G)\leq b$.

Now we show that $\alpha(G)\geq b$. Let $U$ be an arbitrary finite subset of $V$. Then
 \begin{align*}
\sup_{x\in X}\frac{|B_1(x)\cap U|}{|U|}\geq \int \frac{|B_1(x)\cap U|}{|U|}\,d\mu(x)\\
=\frac{1}{|U|}\sum_{u\in U}\int \mathbf{1}_{u\in B_1(x)}\,d\mu(x)\\
=\frac{1}{|U|}\sum_{u\in U}\int \mathbf{1}_{x\in B_1(u)}\,d\mu(x)\\
=\frac{1}{|U|}\sum_{u\in U}\mu(B_1(u))=b.
\end{align*}
As $\sup_{x\in X}\frac{|B_1(x)\cap U|}{|U|}=\max_{x\in X}\frac{|B_1(x)\cap U|}{|U|}=\frac{|B_1(x_0)\cap U|}{|U|}$ for some $x_0\in X$. By density and g.e.c.\ there is $v_0\in V$ such that $N_1(v_0)\cap U=B_1(x_0)\cap U$. Therefore, $\sup_{v\in V}\frac{|N_1(v)\cap U|}{|U|}\geq b$ for all finite $U\subseteq V$. Thus, $\alpha(G)\geq b$ as required.

\end{proof}
\begin{remark}
The hypothesis that the collection $\mathcal{S}=\{B_{1+\epsilon}(v):v\in V,\epsilon\in(-1,1)\cap\mathbb{Q}\}$ has finite VC-dimension holds in many cases, including all metric spaces which are subspaces of those definable in an o-minimal expansions of the real field, for example subspaces of $\RR^n$ with the induced metric, or the sphere and the torus. 
\end{remark}

We obtain the following corollary which is a generalization of Theorem 1.1 in \cite{AS}.
\begin{coro}\label{cor:GTh.recovers.L.in,S_L}
The graph theory $T_{\mathcal{L}_G}(\Ss{L})$ of a circle $\Ss{L}$, $L>2$ recovers $L$. In particular, if $L_1,L_2 > 2$ are distinct then $T_{\mathcal{L}_G}(\Ss{L_1}) \neq T_{\mathcal{L}_G}(\Ss{L_2})$. Similarly the graph theory $T_{\mathcal{L}_G}(S_r)$ of a sphere recovers the diameter $r$ for $r>1/\pi$.
\end{coro}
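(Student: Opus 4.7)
The plan is a direct application of \cref{thm-volume} in both cases, combined with the lemma preceding it, which shows that $\alpha(G)$ is an invariant of the first-order theory of the graph $G$. For each of the two spaces, I would (i) verify that the hypotheses of \cref{thm-volume} apply, (ii) compute the ball volume $Vl_\mu(X)$ explicitly, and (iii) observe that the resulting value of $\alpha(G) = 1/Vl_\mu(X)$ is an injective function of the geometric parameter, so that the graph theory must distinguish distinct values of that parameter.

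For the circle $\Ss{L}$ with $L>2$, I would take $\mu$ to be the usual arc-length measure, which is uniformly distributed with $\mu(\Ss{L}) = L$; since $L > 2$, the open ball of radius $1$ around any point is an arc of length $2$, so $\mu(B_1(x)) = 2$ and $Vl_\mu(\Ss{L}) = L/2$. The continuity condition $\mu(B_{1+\epsilon}(x)) \to \mu(B_1(x))$ as $\epsilon \to 0$ is immediate (in fact $\mu(B_{1+\epsilon}(x)) = 2+2\epsilon$ for $|\epsilon|$ small), and the family of arcs in $\Ss{L}$ has finite VC-dimension (as noted in the remark before the corollary, since arcs are definable in an o-minimal expansion of the real field). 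Hence \cref{thm-volume} gives $\alpha(G) = 2/L$ for any g.e.c.\ graph $G$ on a dense subset of $\Ss{L}$. Since $L \mapsto 2/L$ is injective on $(2,\infty)$ and $\alpha(G)$ is determined by $\Th(G)$, we conclude that $T_{\mathcal{L}_G}(\Ss{L_1}) = T_{\mathcal{L}_G}(\Ss{L_2})$ forces $L_1 = L_2$.

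For the sphere $S_r$ (the $2$-sphere of radius $r$ in $\RR^3$ with the geodesic metric), I would take $\mu$ to be the surface area measure, which is rotation-invariant and hence uniformly distributed, with $\mu(S_r) = 4\pi r^2$. Since $r > 1/\pi$, the geodesic diameter $\pi r$ exceeds $1$ and the ball $B_1(x)$ is a proper spherical cap of angular radius $1/r$, with area $2\pi r^2(1-\cos(1/r))$. Thus $Vl_\mu(S_r) = 2/(1-\cos(1/r))$, and \cref{thm-volume} gives
\[\alpha(G) = \frac{1-\cos(1/r)}{2}.\]
The continuity and VC hypotheses follow from the same o-minimality considerations as in the circle case (spherical caps are definable in an o-minimal expansion of the real field, e.g.\ $\RR_{an,\exp}$, so the relevant family has finite VC-dimension, and the measure of a cap depends continuously on its angular radius). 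As $r$ ranges over $(1/\pi, \infty)$, the quantity $1/r$ ranges over $(0, \pi)$, and on this interval $\cos$ is strictly decreasing, so $(1-\cos(1/r))/2$ is a strictly monotonic (decreasing) function of $r$. Hence $\alpha(G)$ recovers $r$, finishing the proof.

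The main technical point — and really the only one that requires thought — is the monotonicity/injectivity in each case; the hypotheses of \cref{thm-volume} are routine to check given the remark on VC-dimension for o-minimally definable families. No other obstacle is anticipated.
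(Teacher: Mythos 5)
Your proposal is correct and takes essentially the same route as the paper: both invoke \cref{thm-volume}, compute $Vl_\mu(\Ss{L}) = L/2$ and $Vl_\mu(S_r) = 2/(1-\cos(1/r))$, and appeal to the lemma preceding \cref{thm-volume} to see that $\alpha(G)$ is an invariant of $\Th(G)$. You spell out the hypothesis-checking (continuity, VC-dimension) and the injectivity of the resulting function of the geometric parameter somewhat more explicitly than the paper's terse proof, but the argument is the same (also note that the paper's phrase ``diameter $r$'' is slightly misleading: the computation $4\pi r^2$ for the surface area and $2\pi r^2(1-\cos(1/r))$ for the cap use $r$ as the radius, as you do, with the condition $r > 1/\pi$ ensuring the geodesic diameter $\pi r$ exceeds $1$).
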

\begin{proof}
Note that both circles and spheres are definable in the reals, and balls of different radii are uniformly definable. Therefore, the family of balls is of finite VC-dimension. By \cref{thm-volume}, $\alpha(G)=1/Vl_{\mu}(\Ss{L})=2/L$ in the case of $\Ss{L}$ and in the case of $S_r$, by the area formula of a spherical cap, we get \[\alpha(G)=1/Vl_{\mu}(S_r)=2\pi r^2(1-\cos{(1/r)})/4\pi r^2=(1-\cos{(1/r)})/2.\] 
\end{proof}

\section{Dimension and Burden}\label{sec:Burden}
Given a metric space $(X,d)$, suppose it is definable in some o-minimal expansion of the real field. Then there is a natural notion of dimension associated to $X$ which is the o-minimal dimension, which equals to the topological dimension and the Hausdorff dimension (see \cite{FHW17} for more details). In this section, we explore the question that whether the first-order theory of g.e.c.\ graphs on dense independent subsets of $(X,d)$ recovers the dimension of $(X,d)$. In model theory, a ``tame'' theory can be associated with a well-behaved notion of rank, which is analogous to dimension in geometries, for example Morley-rank for $\omega$-stable theories, $SU$-rank for simple theories and so on. Here, we use \emph{burden} in NTP$_{2}$ theories. We will show that the burden of the theory of g.e.c.\ graphs in $(X,d)$ will be bounded above by the dimension of $X$. In particular, these theories are NPT$_2$ with finite burden.

Let us first introduce the notions of burden and dp-rank. They can be both defined by some patterns of formulas, or equivalently by certain configurations of indiscernible sequences. 
\begin{defn}[indiscernible sequences]
An infinite sequence $I=(a_i)_{i<\omega}$ of tuples is said to be \emph{indiscernible (quantifier-free indiscernible respectively)} over a set of parameters $A$ (or $A$-indiscernible), if $$\tp(a_{i_0},\ldots,a_{i_n}/A)=\tp(a_{j_0},\ldots,a_{j_n}/A)$$ ($\qftp(a_{i_0},\ldots,a_{i_n}/A)=\qftp(a_{j_0},\ldots,a_{j_n}/A)$ respectively) for any two sequences $i_0<i_1<\ldots<i_n$ and $j_0<j_1<\ldots<j_n$ and any $n\in\NN$.

A collection of indiscernible sequences $(I_\alpha)_{\alpha<\kappa}$ is said to be \emph{mutually indiscernible} over $A$, if $I_{\alpha}$ is indiscernible over $A\cup \bigcup_{\beta\neq\alpha} I_{\beta}$.

\end{defn}

\begin{defn}[dp-rank] \label{def:dp-rank} An \emph{ict-pattern} in a partial type $\Sigma(x)$ of depth $\kappa$ consists of a sequence of formulas $(\varphi_\alpha(x,y_\alpha))_{\alpha<\kappa}$ and an array of tuples $(a_{i,\alpha})_{i<\omega,\alpha<\kappa}$ such that for any function $\eta:\kappa \to \omega$ there is some $b_\eta \vDash \Sigma$ (namely, $\psi(b_\eta)$ holds for all $\psi\in\Sigma(x)$) such that $\varphi(b_\eta,a_{i,\alpha})$ holds iff $\eta(\alpha) = i$.  

The \emph{dp-rank} of $\Sigma(x)$, denoted as $\dpr(\Sigma)$, is the supremum of the depths of all ict-patterns in $\Sigma(x)$.
\end{defn}

\begin{fact}\label{rem:dp-rank equivalence} \cite[Proposition 4.22]{Simon14}
        Given a partial type $\Sigma(x)$ over $A$ and a cardinal $\kappa$, $\dpr(\Sigma)<\kappa$ iff for every sequence of $A$-mutually indiscernible infinite sequences $(I_{\alpha})_{\alpha<\kappa}$, and for every $b\vDash\Sigma$ there is $\beta<\kappa$ such that $I_\beta$ is indiscernible over $bA$. 
\end{fact}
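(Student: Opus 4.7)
The plan is to prove the contrapositive in both directions, namely that $\dpr(\Sigma) \geq \kappa$ is equivalent to the existence of $A$-mutually indiscernible sequences $(I_\alpha)_{\alpha<\kappa}$ together with some $b \vDash \Sigma$ such that no $I_\alpha$ is indiscernible over $bA$.

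For the direction ``ict-pattern $\Rightarrow$ indiscernible sequences with a witness'', I would start from an ict-pattern of depth $\kappa$ with formulas $(\varphi_\alpha(x,y_\alpha))_{\alpha<\kappa}$ and array $(a_{i,\alpha})$ as in \cref{def:dp-rank}. Working in a sufficiently saturated monster model, I apply the standard iterated indiscernible-extraction argument row by row: successively replace each $(a_{i,\alpha})_{i<\omega}$ by an $A$-indiscernible sequence with the same EM-type, then iterate transfinitely to achieve $A$-mutual indiscernibility, using compactness at limit stages while preserving the ict-pattern property throughout. Once this is done, taking for instance the constant function $\eta \equiv 0$ produces a $b = b_\eta \vDash \Sigma$ for which $\varphi_\alpha(b, a_{0,\alpha})$ holds while $\varphi_\alpha(b, a_{1,\alpha})$ fails for every $\alpha$; this immediately prevents any $I_\alpha$ from being $bA$-indiscernible.

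For the converse, suppose the $(I_\alpha)_{\alpha<\kappa}$ are $A$-mutually indiscernible and $b \vDash \Sigma$ witnesses the failure of $bA$-indiscernibility in every row. For each $\alpha$, pick a formula $\psi_\alpha(\bar{x}; \bar{y}_\alpha)$ together with two increasing tuples from $I_\alpha$ on which $\psi_\alpha(\,\cdot\,; b)$ disagrees; after grouping consecutive elements of $I_\alpha$ into blocks, this reduces to a single formula $\varphi_\alpha(x; \bar{y}_\alpha)$ distinguishing some pair of elements of a stretched indiscernible sequence. A Ramsey refinement inside each $I_\alpha$ then yields a subsequence on which $\varphi_\alpha(b;\,\cdot\,)$ has a ``unit-spike'' pattern (it holds at exactly one position) --- which is precisely the data of the $\alpha$-th coordinate of an ict-pattern. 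Performing this refinement uniformly in $\alpha$ while preserving mutual indiscernibility, and then using compactness to realise, for each $\eta : \kappa \to \omega$, some $b_\eta \vDash \Sigma$ consistent with the intended spike pattern, produces an ict-pattern of depth $\kappa$ in $\Sigma$.

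The main technical obstacle is preserving $A$-mutual indiscernibility throughout the extractions and the simultaneous Ramsey refinements on both sides. This is the classical ``mutually indiscernible extraction'' lemma, proved by a transfinite induction combining compactness with Erd\H{o}s--Rado-type partition arguments; the $\omega$-case is straightforward but the general $\kappa$-case requires working inside a monster model of sufficient saturation. Once mutual indiscernibility is maintained, the final compactness step that produces $b_\eta$ simultaneously for every $\eta : \kappa \to \omega$ is routine.
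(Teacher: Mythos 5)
The paper states this result as a \emph{Fact}, citing Simon's book (Proposition 4.22), and supplies no proof of its own, so there is no argument in the paper to compare against; I will assess your proposal on its own merits.

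Your sketch of the direction ``$\dpr(\Sigma)\geq\kappa$ gives bad mutually indiscernible sequences'' is correct and standard: extract $A$-mutually indiscernible rows with the same EM-type while preserving the ict-pattern condition (a routine but non-trivial extraction lemma, which you correctly flag), and then $b=b_{\eta}$ for $\eta\equiv 0$ fails $bA$-indiscernibility in every row.

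The converse has a genuine gap. Even granting the re-blocking and the extraction, for each $\alpha$, of a row $(\bar{c}_{i,\alpha})_{i<\omega}$ and a formula $\varphi_\alpha$ over $A$ such that $\varphi_\alpha(b,\bar{c}_{i,\alpha})$ holds at exactly one position (say $i=0$), this is \emph{not} yet ``the data of the $\alpha$-th coordinate of an ict-pattern,'' and the compactness you invoke does not close the gap. The ict-pattern requires that for \emph{every} $\eta\colon\kappa\to\omega$ some $b_\eta\vDash\Sigma$ have its spike at position $\eta(\alpha)$ on the same fixed array. To get this by compactness you would need: consistency of $\Sigma(x)\cup\{\varphi_\alpha(x,\bar{c}_{0,\alpha})\}_\alpha\cup\{\neg\varphi_\alpha(x,\bar{c}_{i,\alpha}):i>0\}_\alpha$ implies consistency of the analogous type with the spike at $\eta(\alpha)>0$, which in particular includes $\neg\varphi_\alpha(x,\bar{c}_{i,\alpha})$ for $i<\eta(\alpha)$. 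Mutual $A$-indiscernibility only licenses order-preserving re-indexings of each row; sliding the spike to an interior position of the finite window $\{0,\ldots,m-1\}$ is not order-preserving, so the new type is not automatically consistent. (Concretely: from $\tp(b,\bar{c}_{0,\alpha},\ldots,\bar{c}_{m-1,\alpha}/A)$ one can, by indiscernibility, realise the same type on any increasing $m$-tuple from the row, which moves the entire window, not the spike inside it.) This is exactly the part of the argument that requires real work --- for instance stretching each row to a $\mathbb{Z}$-indexed sequence and then explaining why the witness can be chosen so that the negative pattern persists on the freshly added elements, or choosing the rows/formulas so that the pattern is shift-invariant from the outset --- and it is missing from your sketch.
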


\begin{fact}\label{fac:o-minimal dim = dpr} \cite[Theorem 3.8]{Simon14}
Suppose that $M$ is o-minimal and $X$ is definable, then $\dpr(X)$ is the o-minimal dimension of $X$.
\end{fact}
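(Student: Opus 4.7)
The plan is to prove both inequalities, $\dpr(X) \leq \dim(X)$ and $\dpr(X) \geq \dim(X)$, where $\dim$ denotes the o-minimal dimension.

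For the upper bound, the first step is to establish that any o-minimal theory is \emph{dp-minimal}, meaning the dp-rank of the home sort $M$ is exactly $1$. Using \cref{rem:dp-rank equivalence}, I would show that given two mutually $A$-indiscernible infinite sequences $I_1 = (c_i^1)_{i<\omega}$ and $I_2 = (c_i^2)_{i<\omega}$ in $M$ and any $b \in M$, at least one of them remains indiscernible over $Ab$. The key observation is that for any $A$-definable formula $\phi(x;\bar{y})$ with $x$ a singleton, o-minimality together with cell decomposition yields a uniform finite bound on the number of intervals making up $\phi(M;\bar{c})$ as $\bar{c}$ varies. A pigeonhole argument on the position of $b$ relative to the two sequences then forces the $(Ab)$-cut of one of them to stabilize, so $(Ab)$-indiscernibility is preserved. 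From dp-minimality I would then deduce $\dpr(M^k) \leq k$ via Simon's sub-additivity theorem for dp-rank.

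Next, for a definable set $X \subseteq M^n$ of dimension $d$, I would apply cell decomposition to partition $X$ into finitely many cells. Since dp-rank of a finite definable union equals the maximum of the dp-ranks of its pieces, it suffices to bound each cell. A cell $C$ of dimension $k$ is in definable bijection with an open subset of $M^k$ via a coordinate projection, and since definable bijections preserve dp-rank, $\dpr(C) \leq \dpr(M^k) \leq k \leq d$. Taking the maximum over cells gives $\dpr(X) \leq d$.

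For the lower bound, I need to exhibit an ict-pattern of depth $d$ in $X$. By cell decomposition, $X$ contains a cell of dimension exactly $d$, which after a coordinate permutation is in definable bijection with an open set $U \subseteq M^d$ that in turn contains an open box $I_1 \times \cdots \times I_d$. For each $\alpha \leq d$, choose a sequence of pairwise disjoint open subintervals $(u_{i,\alpha}, v_{i,\alpha}) \subseteq I_\alpha$ for $i < \omega$, arranged so that $v_{i,\alpha} < u_{i+1,\alpha}$. Let $a_{i,\alpha} = (u_{i,\alpha}, v_{i,\alpha})$ and take the formula $\varphi_\alpha(x; u, v) := u < \pi_\alpha(x) < v$, where $\pi_\alpha$ is the projection to the $\alpha$-th coordinate of the cell. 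For any function $\eta : d \to \omega$, the open box $\prod_{\alpha \leq d}(u_{\eta(\alpha),\alpha}, v_{\eta(\alpha),\alpha})$ meets $X$, so I can select $b_\eta$ in this intersection; by the disjointness of the intervals, $\varphi_\alpha(b_\eta, a_{i,\alpha})$ holds iff $i = \eta(\alpha)$. This produces an ict-pattern of depth $d$ witnessing $\dpr(X) \geq d$.

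The hardest part, I expect, is the sub-additivity of dp-rank needed to pass from dp-minimality of $M$ to $\dpr(M^k) \leq k$. Sub-additivity is a delicate theorem requiring careful manipulation of arrays of mutually indiscernible sequences, and is the real technical heart of the argument. The dp-minimality step itself is not entirely routine either: it hinges on extracting uniform component-bounds for one-variable formulas from cell decomposition, and then executing the pigeonhole on cut positions in a way that is consistent with mutual indiscernibility.
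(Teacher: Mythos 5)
The paper records this statement as a black-box fact, citing [Simon14, Theorem 3.8] without reproducing the argument, so there is no in-paper proof to compare against. Your outline is the standard derivation of this result: dp-minimality of o-minimal theories, sub-additivity of dp-rank (which you correctly flag as the real technical burden --- it is proved in the cited Simon paper and independently by Kaplan--Onshuus--Usvyatsov), cell decomposition combined with invariance of dp-rank under definable bijections and under finite definable unions for the upper bound, and an explicit ict-pattern built from pairwise-disjoint interval boxes in a maximal-dimensional cell for the lower bound. Modulo the details you explicitly acknowledge as delicate, this is correct and follows the same chain of reasoning as the cited source. One small clarification: in the mutually-indiscernible characterization (\cref{rem:dp-rank equivalence}) the rows $I_\alpha$ are sequences of finite \emph{tuples}, not singletons, so the pigeonhole for dp-minimality should be carried out for formulas $\phi(x;\bar y)$ with $\bar y$ of arbitrary finite length; o-minimal cell decomposition still gives a uniform bound on the number of intervals comprising $\phi(M;\bar c)$ as $\bar c$ varies, so the argument goes through unchanged.
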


\begin{defn}[Burden]\label{def:burden} 
An \emph{inp-pattern} in a partial type $\Sigma(x)$ of depth $\kappa$ consists of a sequence of formulas $(\varphi_\alpha(x,y_\alpha))_{\alpha<\kappa}$ and an array of tuples $(a_{i,\alpha})_{i<\omega,\alpha<\kappa}$ such that 
\begin{itemize}
    \item 
    $\{\varphi_\alpha(x,a_{i,\alpha})\}_{i<\omega}$ is $k_\alpha$-inconsistent for each $\alpha<\kappa$ and for some natural number $k_\alpha$.
    \item
    $\{\varphi_\alpha(x,a_{\eta(\alpha),\alpha})\}_{\alpha<\kappa}\cup\Sigma(x)$ is consistent for any function $\eta:\kappa \to \omega$.
\end{itemize}
The \emph{burden} of $\Sigma(x)$, denoted as $\bdn(\Sigma)$, is the supremum of the depths of all inp-patterns in $\Sigma(x)$.

    A theory $T$ is NTP$_2$ if $\bdn(T):=\bdn(x=x)<\infty$ (here $x$ is a singleton), \textit{i.e.}, there is some cardinal $\kappa$ such that $\bdn(T)<\kappa$ (this is the same as saying that there is no inp-pattern with the same formula and the same $k$). If $\bdn(T)<2$ we  say that $T$ is \emph{inp-minimal}.
\end{defn}

\begin{fact} \label{rem:burden equivalence}\cite[Lemma 2.4]{ArtemNTP2}
  Given a partial type $\Sigma(x)$ over $A$ and a cardinal $\kappa$, $\bdn(\Sigma)<\kappa$ iff for every $A$-mutually indiscernible sequences $(I_{\alpha})_{\alpha<\kappa}$, with $I_{\alpha}=(a_{\alpha,i})_{i<\omega}$ and every $b\vDash\Sigma$ there is $\beta<\kappa$ such that there exists $I'$ indiscernible over $bA$ with $I'\equiv_{a_{\beta,0}A}I_{\beta}$.
\end{fact}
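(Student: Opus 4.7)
My plan is to prove both directions of the biconditional by contrapositive, viewing the characterization as a correspondence between depth $\kappa$ inp-patterns in $\Sigma$ and systematic obstructions to $bA$-indiscernibility along mutually $A$-indiscernible sequences.

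For the direction ``indiscernibility condition implies $\bdn(\Sigma)<\kappa$'': take an inp-pattern $(\varphi_\alpha(x,y_\alpha),(a_{\alpha,i})_{i<\omega})_{\alpha<\kappa}$ of depth $\kappa$ in $\Sigma$. Apply the standard array-extraction lemma (iterated Ramsey together with compactness) to arrange that the rows $I_\alpha=(a_{\alpha,i})_{i<\omega}$ are $A$-mutually indiscernible while preserving both $k_\alpha$-inconsistency per row and path-consistency with $\Sigma$, since both are type-definable conditions over $A$. Path-consistency along $\eta\equiv 0$ yields $b\vDash\Sigma$ with $b\vDash\varphi_\alpha(x,a_{\alpha,0})$ for every $\alpha$. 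By the assumed condition, there exist $\beta<\kappa$ and $I'=(a'_{\beta,i})_{i<\omega}$ indiscernible over $bA$ with $I'\equiv_{a_{\beta,0}A}I_\beta$, so $a'_{\beta,0}=a_{\beta,0}$ and thus $b\vDash\varphi_\beta(x,a'_{\beta,0})$; by $bA$-indiscernibility of $I'$, $b\vDash\varphi_\beta(x,a'_{\beta,i})$ for every $i$, contradicting the $k_\beta$-inconsistency of $\{\varphi_\beta(x,a'_{\beta,i})\}_i$, which transfers from $I_\beta$ to $I'$ since it is a property of $\tp(I'/A)=\tp(I_\beta/A)$.

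For the harder direction ``$\bdn(\Sigma)<\kappa$ implies the indiscernibility condition'': suppose $(I_\alpha)_{\alpha<\kappa}$ are $A$-mutually indiscernible, $b\vDash\Sigma$, and for each $\beta<\kappa$ there is no $I'\equiv_{a_{\beta,0}A}I_\beta$ indiscernible over $bA$. I will build an inp-pattern of depth $\kappa$ in $\Sigma$, contradicting $\bdn(\Sigma)<\kappa$. Fix $\beta$. The key per-row claim is that the non-existence of such $I'$ forces some formula $\varphi_\beta(x,y_\beta)\in\LL(A)$ and natural number $k_\beta$ with $b\vDash\varphi_\beta(x,a_{\beta,0})$ and $\{\varphi_\beta(x,a_{\beta,i})\}_{i<\omega}$ being $k_\beta$-inconsistent. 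Informally, the obstruction to straightening $I_\beta$ into a $bA$-indiscernible sequence starting with $a_{\beta,0}$ must manifest in an $\LL(A)$-formula witnessing dividing of $\tp(b/a_{\beta,0}A)$ along $I_\beta$, since otherwise the standard indiscernible-extraction procedure (Ramsey plus compactness, applied to a sub-sequence of $I_\beta$ whose first element is $a_{\beta,0}$) would produce the forbidden $I'$. With the per-row obstructions in hand, mutual $A$-indiscernibility of $(I_\alpha)$ gives path-consistency along any $\eta$: the joint $A$-type of $(a_{\alpha,\eta(\alpha)})_\alpha$ is invariant under $\eta$, so by compactness a realization $b^\eta\equiv_A b$ sending $(a_{\alpha,0})_\alpha$ to $(a_{\alpha,\eta(\alpha)})_\alpha$ exists and gives consistency of the $\eta$-column with $\Sigma$. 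Hence $(\varphi_\alpha,(a_{\alpha,i})_i,k_\alpha)_{\alpha<\kappa}$ is an inp-pattern of depth $\kappa$ in $\Sigma$, a contradiction.

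The main obstacle is the per-row extraction in the second direction: translating the non-existence of a $bA$-indiscernible $I'\equiv_{a_{\beta,0}A}I_\beta$ into a single $\LL(A)$-formula witnessing $k_\beta$-inconsistency on the row. The underlying ingredient I would invoke is the well-known equivalence between the extendibility of an $A$-indiscernible sequence to one indiscernible over arbitrary parameters and non-dividing of the relevant type over $a_{\beta,0}A$. Once that per-row step is in place, mutual $A$-indiscernibility of the $(I_\alpha)$'s ensures that the individual obstructions combine seamlessly, via a compactness argument over the initial column, into a genuine depth $\kappa$ inp-pattern realized by $b$.
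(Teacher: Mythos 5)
The paper does not prove this statement; it cites it verbatim from Chernikov's paper as \cite[Lemma 2.4]{ArtemNTP2}, so there is no in-paper proof to compare against. That said, your blind proof is correct and follows the standard route found in the NTP$_2$ literature. In the direction from the indiscernibility condition to $\bdn(\Sigma)<\kappa$ you correctly use the array-extraction lemma to pass to an $A$-mutually indiscernible inp-pattern (both $k_\alpha$-inconsistency per row and path-consistency with $\Sigma$ are preserved since they are expressed on finite subarrays, hence pass through the EM-type of the extracted array), take $b\vDash\Sigma$ realizing the $\eta\equiv 0$ path, and then observe that any $I'\equiv_{a_{\beta,0}A}I_\beta$ has $a'_{\beta,0}=a_{\beta,0}$ (the type over $a_{\beta,0}A$ pins down the first coordinate), which combined with $bA$-indiscernibility of $I'$ and the transferred $k_\beta$-inconsistency yields the contradiction. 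In the converse direction, the crux is exactly the ``per-row'' step you single out, and you invoke the right tool: the standard lemma that for an $A$-indiscernible $I$ with first element $a$ the existence of some $I'\equiv_{aA}I$ indiscernible over $bA$ is equivalent to consistency of $\{\varphi(x,a_i)\}_{i<\omega}$ for every $\varphi(x,a)\in\tp(b/aA)$; its failure produces $\varphi_\beta$ with $b\vDash\varphi_\beta(x,a_{\beta,0})$ and $k_\beta$-inconsistency of the row (the jump from ``inconsistent'' to ``$k$-inconsistent'' uses indiscernibility of $I_\beta$ over $A$). Your compactness-plus-automorphism argument for path-consistency, using that mutual $A$-indiscernibility makes all columns $(a_{\alpha,\eta(\alpha)})_\alpha$ have the same type over $A$, is also correct. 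So your proposal is a sound reconstruction of the cited lemma.
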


To prove that the burden is bounded by the o-minimal dimension, we need a technical lemma first.
\begin{lemma} \label{lem:burden technical lemma}
Let $M$ be an o-minimal expansion of the real field in the language $\mathcal{L}$.
Suppose $M$ is a defining structure of a separable metric space $(X,d)$  without isolated points (see \cref{def:defining structure and ind set}) such that thickness is definable. Suppose that $S \subseteq X$ is dense and independent, $G= (S,E)$ is a g.e.c.\ graph. Let 
\[(M^*,X^*,S^*,E^*,\mathbb{R}^*)\]
be a non-principal ultrapower of $(M,X,S,E,\mathbb{R})$. Suppose $a \in S^*$ is a singleton, and that $I = (a_0, a_1 ,\ldots ) \subseteq S^*$ is an $a$-indiscernible sequence of finite tuples in $M^*$ (i.e., in language $\mathcal{L}$) and quantifier-free indiscernible in $G^*$ (i.e., in the graph language $\mathcal{L}_G$), perhaps not over $a$. Then there is some $a'\in S^*$ such that $\tp_{\mathcal{L}^E_{\text{ind}}}(a'a_i) = \tp_{\mathcal{L}^E_{\text{ind}}}(a a_0)$ in $G^*$ for all $i<\omega$, where $\mathcal{L}^E_{\text{ind}}$ is defined as in \cref{thm:elementary equivalence for dense independent sets}. 
\end{lemma}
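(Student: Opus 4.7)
The plan is to apply a saturation argument in the spirit of \cref{cla:back-n-forth}. I will exhibit the required $a'$ as a realisation in $S^*$ of the partial type
\[
p(x) := \{x \in S^*\} \cup \bigcup_{i < \omega} q_i(x)
\]
over $\{a_i : i < \omega\}$, where each $q_i(x)$ is obtained from $\tp_{\mathcal{L}_{\text{ind}}^E}(aa_0)$ by substituting $a_i$ for the parameter $a_0$. Since $\mathcal{L}$ (hence $\mathcal{L}_{\text{ind}}^E$) is countable, the parameter set is countable, and the ultrapower is countably saturated, the problem reduces to showing that $p$ is finitely satisfiable.

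First I would dispose of the possibility that $a$ appears as a coordinate of every $a_i$: by $a$-indiscernibility of $I$ in $\mathcal{L}$ and the fact that equality is in $\mathcal{L}$, either $a$ is a coordinate of every $a_i$ at a fixed position, or of none. In the former case I claim $a' := a$ already works. The $\mathcal{L}$-part of $p$ follows from $a$-indiscernibility; the required edge conditions $E(a', a_{i, k}) \leftrightarrow E(a, a_{0, k})$ become statements about edges internal to the tuple $a_i$ (since $a'=a$ is itself a coordinate of $a_i$), and these are uniform in $i$ by the quantifier-free $\mathcal{L}_G$-indiscernibility of $I$.

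Assume henceforth that $a$ is not a coordinate of any $a_i$. Fix a finite $p_0 \subseteq p$ involving $F = \{i_1, \ldots, i_n\}$ and finitely many $\mathcal{L}$-formulas $\phi_1, \ldots, \phi_k$, each paired with a sign recording $M^* \vDash \phi_j(a, a_0)$. The key geometric gadget is
\[
\psi(x, \bar{y}_F) := \bigwedge_{j, l} \phi_j^{\pm}(x, y_l) \wedge \bigwedge_l \Delta(x, y_l),
\]
where $\Delta(x, y_l)$ is $\bigwedge_c d(x, y_{l, c}) < 1$ if $d(a, a_0) < 1$, and $\bigwedge_c d(x, y_{l, c}) \geq 1$ otherwise ($c$ ranging over the coordinates of the tuple $y_l$). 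By $a$-indiscernibility, $M^* \vDash \psi(a, \bar{a}_F)$. Choosing representatives $a = [a^i]$, $a_{i_l} = [a_{i_l}^i]$, \L o\'s gives that for $\mathcal{U}$-many $i$, $a^i \in S \cap \psi(X, \bar{a}_F^i)$ and $a^i$ is not a coordinate of $\bar{a}_F^i$. Independence of $S$ then forces $\psi(X, \bar{a}_F^i)$ to be thick for $\mathcal{U}$-many $i$; by definability of thickness this transfers to $M^*$, so for $\mathcal{U}$-many $i$ we get an open ball $B_{\epsilon_i}(v^i) \subseteq \psi(X, \bar{a}_F^i)$. Density of $S$ supplies $u^i \in S \cap B_{\epsilon_i/2}(v^i)$, so $B_{\epsilon_i/2}(u^i) \subseteq \psi(X, \bar{a}_F^i)$. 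The required $a'^i \in S \cap B_{\epsilon_i/2}(u^i)$ is now produced exactly as in \cref{cla:back-n-forth}: if $d(a, a_0) \geq 1$, the very definition of $\psi$ already forces $d(a'^i, a_{i_l, c}^i) \geq 1$, so any point supplied by density works; if $d(a, a_0) < 1$, each coordinate of $\bar{a}_F^i$ lies in $B_1(u^i)$ and g.e.c.\ at $u^i$ with radius $\epsilon_i/2$---taking $A$ to be the set of coordinates of $\bar{a}_F^i$ and $B = \emptyset$ when $a E^* a_0$, and conversely when $\neg a E^* a_0$---yields $a'^i$ with the prescribed adjacencies. Then $[a'^i] \in S^*$ realises $p_0$, and countable saturation finishes the proof.

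The main obstacle I anticipate is the bookkeeping created by the tuple nature of the $a_i$: independence, density, and g.e.c.\ are coordinate-wise statements about $S$, whereas the type $p$ encodes adjacencies of $a'$ to every coordinate of every $a_i$. This is neutralised by the two indiscernibility hypotheses in complementary ways: $a$-indiscernibility in $\mathcal{L}$ makes the $\mathcal{L}$-type of $(a, a_i)$ and its distance status uniform in $i$, and lifts joint satisfiability of the finite system from $a$ to the ultraproduct coordinates via \L o\'s; while the only aspect not controlled by $\mathcal{L}$-indiscernibility over $a$---the internal edges inside each $a_i$---is uniform in $i$ by qf-$\mathcal{L}_G$-indiscernibility, and becomes pertinent exactly in the degenerate case when $a$ sits inside $a_i$, which is the case dispatched by $a' = a$.
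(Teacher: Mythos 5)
Your overall strategy matches the paper's: reduce to finite satisfiability in the countably saturated ultrapower, descend to representatives via \L o\'s, use independence together with definability of thickness to extract a ball, and then apply density and g.e.c.\ to produce the required point. Your explicit dichotomy on whether $a$ occurs as a coordinate of the $a_i$'s is a reasonable elaboration of the paper's terse ``we may assume $a\notin I$''.

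There is, however, a genuine error in how you handle the tuple structure of $a_0$, and it makes the argument break down as written. Your $\Delta(x,y_l)$ imposes a \emph{uniform} distance condition on all coordinates of $y_l$, chosen according to a single global test ``$d(a,a_0)<1$'' --- but $a_0$ is a finite tuple, not a single point, and different coordinates $(a_0)_c$ can have different distance status from $a$. If, say, $d(a,(a_0)_{c_0})<1$ and $d(a,(a_0)_{c_1})\geq 1$, then whichever of the two uniform $\Delta$'s you pick, $\Delta(a,a_0)$ is false, so $M^*\not\vDash \psi(a,\bar a_F)$ and the rest of the argument does not get off the ground. The same confusion recurs in the g.e.c.\ step: you take $A$ to be \emph{all} coordinates of $\bar a_F^i$ and $B=\emptyset$ ``when $a E^* a_0$'', and conversely otherwise, whereas $a$ can perfectly well be adjacent to some coordinates of $a_0$ and not to others, and among those at distance $\geq 1$ there can of course be no edge. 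The paper handles this correctly by working coordinate-by-coordinate: $\theta(x,y)$ is strengthened to decide, \emph{for each} $j<|y|$ separately, whether $d(x,y_j)<1$ or $\geq 1$; and the g.e.c.\ sets are $A=\{(a_i)_j : (a_0)_j \mathrel{E^*} a\}$ and $B=\{(a_i)_j : \neg((a_0)_j \mathrel{E^*} a)\}\cap B_1(c)$. Your closing ``obstacle'' paragraph shows you are aware that the bookkeeping is coordinate-wise and that the per-coordinate distance and edge statuses are uniform in $i$ by the two indiscernibility hypotheses; the fix is simply to implement that awareness in $\Delta$ and in the choice of $A,B$.

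One secondary omission: you define $p(x)$ from the full $\mathcal{L}^E_{\text{ind}}$-type of $(a,a_0)$, but then only verify finite satisfiability of $\mathcal{L}$-formulas together with edge formulas, i.e.\ of the quantifier-free part. The identification of the two is justified by quantifier elimination for $T_{\mathcal{L}^E_{\text{ind}}}$ (\cref{thm:elementary equivalence for dense independent sets}), which the paper invokes explicitly at the outset and which your argument silently relies on.
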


\begin{proof}
By  \cref{thm:elementary equivalence for dense independent sets}, it is enough to find $a'$ such that $\tp_{\mathcal{L}}(a'a_i) = \tp_{\mathcal{L}}(a a_0)$ in $M^*$ and $\qftp_{\mathcal{L}_G}(a' a_i) = \qftp_{\mathcal{L}_G}(a a_0)$ in $G^*$. 
We may assume that $a \notin I$. Let $r(x,y) = \qftp_{\mathcal{L}_G}(a a_0)$. Note that this is a finite set of formulas describing the edge relations between $a$ and elements from $a_0$ (because $a_0$ is a finite tuple). By compactness it is enough to prove that given any $\mathcal{L}$-formula $\theta(x,y)\in\tp_{\mathcal{L}}(a,a_0)$, $S(x) \land \bigwedge_{i<k} \theta(x,a_i) \land \bigwedge_{i<k} r(x,a_i)$ is consistent. We may assume that for each $j<|y|$, $\theta(x,y)$ implies $d(x,y_j)<1$ or $d(x,y_j) \geq 1$.

Now, $\theta(x,a_i)$ contains a (non-standard) open ball $B_{\epsilon}(c)$ for some $c\in M^*$ and positive $\epsilon\in \mathbb{R}^{*}$. Indeed, this follows since $S^*$ is an ultrapower of a dense independent set,  \[a\vDash\bigwedge_{i<k} \theta(x,a_i)\] and $\{a,a_0,\ldots,a_{k-1}\}\subseteq S^*$. As $\theta(x,a_i)$ implies $d(x,(a_i)_j)<1$ or $d(x,(a_i)_j)\geq 1$ for every $j<|y|$, we may apply g.e.c.\ to $c$ and $A:=\{(a_i)_j:i<k, j<|y|, (a_0)_j\mathrel{E^*}a\}$ and $B:=\{(a_i)_j:i<k, j<|y|, \neg ((a_0)_j\mathrel{E^*}a)\}\cap B_1(c)$ in $B_1(c)$,\footnote{Note that the g.e.c.\ property of $(S,E)$ transfers to the ultrapower.} and find $a'\in B_{\epsilon}(c)\cap S^*$ which is $E$-adjacent to all points in $A$ and none of the points in $B$ (note that $A \cap B = \emptyset$ by indiscernibility). Now, by the choice of $c$ and $\epsilon$, $a'$ satisfies $S(x) \land \bigwedge_{i<k} \theta(x,a_i) \land \bigwedge_{i<k} r(x,a_i)$ as required.
\end{proof}

\begin{thm} \label{thm:bound on burden}
Let $M$ be an o-minimal structure 
which is a defining structure of a separable metric space $(X,d)$ without isolated points. Suppose the o-minimal dimension of $X$ is $\ell$. Let $S\subseteq X$ be an independent dense set and $G=(S,E)$ be a g.e.c.\ graph in $(X,d)$, then $T_{\mathcal{L}_G}(X,d):=\Th(G)$ has burden at most $\ell$. In particular $T_{\mathcal{L}_G}(X,d)$ is NTP$_2$.
\end{thm}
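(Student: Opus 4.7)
The plan is to apply Fact~\ref{rem:burden equivalence}: it suffices to show that for any $\ell+1$ mutually $\mathcal{L}_G$-indiscernible sequences $(I_\alpha)_{\alpha\leq\ell}$ of finite tuples from $G$ and any singleton $b\in G$, there exist $\beta\leq\ell$ and a sequence $I'$ with $I'\equiv_{a_{\beta,0}} I_\beta$ in $\mathcal{L}_G$ that is indiscernible over $b$ in $\mathcal{L}_G$. I will work throughout in a non-principal ultrapower $(M^*,X^*,S^*,E^*,\mathbb{R}^*)$ sufficiently saturated, as in Lemma~\ref{lem:burden technical lemma}.

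First I reduce to the case where $(I_\alpha)_{\alpha\leq\ell}$ is mutually indiscernible in the richer language $\mathcal{L}\cup\mathcal{L}_G$. A standard Ramsey / EM-functor extraction in $\mathcal{L}\cup\mathcal{L}_G$ produces such mutually indiscernible sequences realizing the same $\mathcal{L}_G$-EM-type as the originals; by saturation of the ultrapower one may further arrange that each new sequence still starts at $a_{\alpha,0}$. Hence without loss of generality each $I_\alpha$ is mutually indiscernible in $\mathcal{L}\cup\mathcal{L}_G$; in particular the $(I_\alpha)_{\alpha\leq\ell}$ form mutually $\mathcal{L}$-indiscernible sequences of tuples from $X^*$.

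Since the o-minimal dimension of $X$ equals $\ell$, Fact~\ref{fac:o-minimal dim = dpr} yields $\dpr_M(x\in X)=\ell$, so by Fact~\ref{rem:dp-rank equivalence} applied to the mutually $\mathcal{L}$-indiscernible sequences and the element $b\in S^*\subseteq X^*$, there exists some $\beta\leq\ell$ such that $I_\beta$ is $\mathcal{L}$-indiscernible over $b$ in $M^*$. Applying Lemma~\ref{lem:burden technical lemma} with $a:=b$ and $I:=I_\beta$ (whose hypotheses are satisfied, since $\mathcal{L}$-indiscernibility over $b$ certainly implies being $b$-indiscernible in $\mathcal{L}$ and qf-indiscernible in $\mathcal{L}_G$) now produces $a'\in S^*$ satisfying $\tp_{\mathcal{L}^E_{\text{ind}}}(a',a_{\beta,i})=\tp_{\mathcal{L}^E_{\text{ind}}}(b,a_{\beta,0})$ for all $i<\omega$.

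The remaining step is to turn this two-point information into the full indiscernibility demanded by the characterization of burden. Since $\mathcal{L}_G$ contains only the binary edge relation, the $\mathcal{L}_G$-type of a finite tuple $(a_{\beta,i_1},\ldots,a_{\beta,i_n},a')$ is determined by the $\mathcal{L}_G$-type of $(a_{\beta,i_1},\ldots,a_{\beta,i_n})$ (which is already indiscernible) together with the truth values of $a'\mathrel{E^*}a_{\beta,i_j}$, and these last values are all equal by the choice of $a'$. Hence $I_\beta$ is $\mathcal{L}_G$-indiscernible over $a'$. Finally, $(a',a_{\beta,0})\equiv_{\mathcal{L}_G}(b,a_{\beta,0})$, so by homogeneity of $G^*$ there is an automorphism $\sigma$ fixing $a_{\beta,0}$ with $\sigma(a')=b$; setting $I':=\sigma(I_\beta)$ yields an $\mathcal{L}_G$-indiscernible sequence over $b$ with $I'\equiv_{a_{\beta,0}} I_\beta$, which is exactly what we needed. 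The main obstacle I expect is this transfer step: Lemma~\ref{lem:burden technical lemma} only provides two-point type equalities, and it is crucial that $\mathcal{L}_G$ is a single binary relation so that these two-point types determine the type of any longer tuple once a single new parameter is added.
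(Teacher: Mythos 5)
Your high-level plan matches the paper's (ultrapower, dp-rank bound via Fact~\ref{rem:dp-rank equivalence} and Fact~\ref{fac:o-minimal dim = dpr}, Lemma~\ref{lem:burden technical lemma}, then an automorphism), but you deviate by trying to run the burden computation directly in $\mathcal{L}_G$, and this introduces two genuine gaps.

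\textbf{Gap 1: the reduction to $\mathcal{L}\cup\mathcal{L}_G$-mutual indiscernibility with fixed starting points.} You claim that, by Ramsey extraction plus saturation, one can replace the given mutually $\mathcal{L}_G$-indiscernible array $(I_\alpha)$ by a mutually $\mathcal{L}\cup\mathcal{L}_G$-indiscernible array with the same $\mathcal{L}_G$-EM-type that \emph{still starts at the original elements} $a_{\alpha,0}$. The Ramsey extraction controls the $\mathcal{L}_G$-EM-type, but it does not control the $\mathcal{L}$-types of the new first elements: they are merely finitely approximated among the $\mathcal{L}$-types occurring inside the original array, and there is no reason for them to match $\tp_{\mathcal{L}}((a_{\alpha,0})_\alpha)$. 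You cannot simply conjugate them onto $(a_{\alpha,0})_\alpha$ by an $\mathcal{L}_G$-automorphism, since such an automorphism need not preserve $\mathcal{L}$-indiscernibility; and an $\mathcal{L}$-automorphism need not preserve the $\mathcal{L}_G$-data. But without the starting points being the original $a_{\beta,0}$, the final $I'$ you produce only satisfies $I'\equiv$ (new first point) $J_\beta$, which is not the $I'\equiv_{a_{\beta,0}}I_\beta$ required by Fact~\ref{rem:burden equivalence}.

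\textbf{Gap 2: two-point types do not give full indiscernibility over $a'$.} You assert that because $\mathcal{L}_G$ has a single binary relation, the $\mathcal{L}_G$-type of $(a_{\beta,i_1},\ldots,a_{\beta,i_n},a')$ is determined by the $\mathcal{L}_G$-type of $(a_{\beta,i_1},\ldots,a_{\beta,i_n})$ together with the edges from $a'$ to the $a_{\beta,i_j}$. This is false: the \emph{full} $\mathcal{L}_G$-type of the extended tuple includes quantified statements such as $\exists y\,(y\mathrel{E}a'\wedge\bigwedge_j \pm y\mathrel{E}a_{\beta,i_j})$, whose truth depends (via g.e.c.) on the metric geometry of the configuration, i.e., on $\mathcal{L}$-data, and not just on the atomic edge relations. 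The theory $\Th_{\mathcal{L}_G}(G)$ does not have quantifier elimination. Lemma~\ref{lem:burden technical lemma} only equalizes the two-point $\mathcal{L}^E_{\text{ind}}$-types of $(a',a_{\beta,i})$; it does not control the type of longer subtuples over $a'$. Even with the $\mathcal{L}$-data on pairs, a tuple's $\mathcal{L}$-type in an o-minimal structure is not determined by pairwise types, so a further step is unavoidable.

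The paper sidesteps both problems by proving the stronger bound on burden in $\mathcal{L}^E_{\text{ind}}$ (which implies the $\mathcal{L}_G$ bound since burden cannot increase in a reduct). There, mutual $\mathcal{L}^E_{\text{ind}}$-indiscernibility already entails mutual $\mathcal{L}$-indiscernibility (through the relations $R_\phi$), so there is no reduction step and hence no gap 1; and, after applying Lemma~\ref{lem:burden technical lemma}, the paper performs an explicit Ramsey-and-compactness step to produce a sequence $I_k'$ that \emph{is} indiscernible over $a'$ with the same EM-type over $a'$, rather than trying to deduce this for free from the two-point data. If you want to keep your $\mathcal{L}_G$-only perspective, the cleanest fix is to mimic the paper: prove the bound for $\Th_{\mathcal{L}^E_{\text{ind}}}(G)$ and then pass to the reduct.
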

\begin{proof}
We will prove $Th_{\mathcal{L}^E_{\text{ind}}}(G)$ has burden at most $\ell$.
Let 
\[(M^*,X^*,S^*,E^*,\mathbb{R}^*)\]
be a non-principal ultrapower of $(M,X,S,E,\mathbb{R})$. 
Let $a \in G^*$, and suppose that $(I_{k})_{k<\ell+1}$ is a family of mutually indiscernible sequences of tuples in $G^*$ . The indiscerniblity is in the sense of $\mathcal{L}^E_{\text{ind}}$ which means that they are mutually indiscernible in the sense of $\mathcal{L}$ and in the sense of the graph language $\mathcal{L}_G$. By \cref{rem:burden equivalence} and saturation, it is enough to show that for some $k<\ell+1$, there is some $I_k' \equiv_{aa^k_0} I_k$ where $I_k = (a^k_0, a^k_1 , \ldots )$ and the equivalence is in $\mathcal{L}^E_{\text{ind}}$ (in $G^*$). (The sequence $I_k'$ may be in an elementary extension of $G^*$, but note that any witness of large burden exists in $G^*$ by countable saturation.)

By \cref{fac:o-minimal dim = dpr}, $\dpr(X) \leq \ell$, and hence (by \cref{rem:dp-rank equivalence}) for some $k<\ell+1$, $I_k$ is $a$-indiscernible in the sense of $\mathcal{L}$. Note that $I_k$ is indiscernible in $G^*$ by assumption. By \cref{lem:burden technical lemma}, there is some $a' \in G^*$ such that $\tp_{\mathcal{L}^E_{\text{ind}}}(a' a^k_i) = \tp_{\mathcal{L}^E_{\text{ind}}}(a a^k_0)$ for all $i<\omega$ in $G^*$. By Ramsey and compactness there is $I_k' \equiv_{\mathcal{L}^E_{\text{ind}}} I_k$ which is indiscernible over $a'$ 
and have the same EM-type as $I_k$ over $a'$, namely all formulas $\varphi(x_1,\ldots,x_n,a')$ such that $\varphi(a^k_{i_1},\ldots,a^k_{i_n},a')$ holds for all $i_1<i_2<\ldots<i_n$. Write $I_k' = (a'_0, a'_1, \ldots)$. In particular, $\tp_{\mathcal{L}^E_{\text{ind}}}(a' a'_i) = \tp_{\mathcal{L}^E_{\text{ind}}}(a' a^k_i) =  \tp_{\mathcal{L}^E_{\text{ind}}}(a a^k_0)$ for all $i<\omega$. Working in an elementary extension, we may apply an automorphism sending $a'a_0'$ to $aa^k_0$, and it sends $I_k'$ to $I_k''=(a^k_0,a_1'',\ldots)$ starting with $a^k_0$ and is $a$-indiscernible. Therefore, $I_k'' \equiv_{aa^k_0} I_k$ in $\mathcal{L}^E_{\text{ind}}$ as desired.


\end{proof}

\begin{coro}
The theory of g.e.c.\ graphs on a dense independent set of the circle $\mathbb{S}_L$ of length $L>0$ is inp-minimal. 
\end{coro}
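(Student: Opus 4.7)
The plan is to invoke \cref{thm:bound on burden} directly, with the circle as the metric space, once we verify that the hypotheses of that theorem are met for $\mathbb{S}_L$ with its natural defining structure.

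First, I would set up the defining structure for $\mathbb{S}_L$ in an o-minimal expansion of the real field (the ordered field $\bar{\RR}$ suffices): the circle $\mathbb{S}_L$ is a Riemannian submanifold of $\RR^2$ of dimension $1$, and the relation $d_L(x,y)<1$ is definable by elementary formulas in $\bar{\RR}$. Hence by \cref{prop:O-minDefiningStructure}, $\bar{\RR}$ is a defining structure for $\mathbb{S}_L$ in which thickness is definable, and the o-minimal dimension of $\mathbb{S}_L$ as a definable subset of $\RR^2$ is $1$.

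Next, take any dense independent subset $S \subseteq \mathbb{S}_L$ and any g.e.c.\ graph $G = (S,E)$. Applying \cref{thm:bound on burden} with $\ell = 1$, we conclude that $T_{\LL_G}(\mathbb{S}_L) = \Th(G)$ has burden at most $1$. By the convention laid out just after \cref{def:burden} --- namely, that $\bdn(T)<2$ is the definition of inp-minimality --- this is exactly the statement that the theory is inp-minimal.

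There is essentially no obstacle here: the corollary is an immediate specialization of \cref{thm:bound on burden} combined with \cref{prop:O-minDefiningStructure}, since all the nontrivial content (elementary equivalence across dense independent sets, and the bound of the burden by the o-minimal dimension) has already been established. The only small care needed is to note that the degenerate cases $L \leq 2$ still fit the framework; when $L < 2$ (or more precisely when $\mathbb{S}_L$ has diameter $<1$) the theory reduces to the theory of the random graph, which is known to be inp-minimal (indeed, simple of SU-rank $1$), and when $L=2$ the same holds, so the statement is uniformly covered.
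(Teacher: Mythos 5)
Your proof is correct and is exactly the intended argument: the paper states the corollary without an explicit proof precisely because it is an immediate specialization of \cref{thm:bound on burden} via \cref{prop:O-minDefiningStructure} with $\ell = 1$. (Minor quibble: $\bar{\RR}$ alone may not suffice for $\emptyset$-definability of $d_L(x,y)<1$ when $\cos(2\pi/L)$ is transcendental, but adding a constant for $L$ or working in $\RR_{an}$ keeps the structure o-minimal, and the $L\leq 2$ caveat is unnecessary since the theorem already applies there.)
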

Note that when $L>4$, circular order is definable by Section 2, hence the theory has strict order property and IP.

\section{Urysohn space}\label{sec:Urysohn}


The (complete\footnote{To distinguish with the rational Urysohn space.}) Urysohn space introduced in \cite{Urysohn} is the unique complete separable metric space which contains all separable metric spaces up to isomorphism and is ultra-homogeneous in the sense that any partial isometry between finite subsets can be extended to an isometry onto the whole space. In this section, we study g.e.c.\ graphs on dense subsets of the Urysohn space. Note that there is no obvious defining structure for the Urysphn space, and we will prove by hand that all g.e.c.\ graphs have the same first-order theory on integer-distance free dense subsets. In fact, we will prove that they are all isomorphic. Therefore, the complete Urysohn space is geometrically Rado. 

Let $\mathfrak{U}$ be the complete Urysohn space and $U\subseteq \mathfrak{U}$ be a countable dense subset of it. We call $U$ \emph{integer-distance free} if $d(x,y)\not\in\mathbb{N}$ for all $x\neq y\in U$.

\begin{thm}\label{thm:Ury}
Let $\mathcal{L}=\{C_n:n\in\mathbb{N}^{>0}\}$ be a set of binary relations. Suppose $U_1,U_2\subseteq\mathcal{U}$ are two countable integer-distance free dense subsets. Let $C_n$ be interpreted as $C_n(x,y)$ iff  $n-1<d(x,y)<n$ in $U_1$ and $U_2$ with $d$ the metric coming from $\mathfrak{U}$. Then $U_1$ and $U_2$ are isomorphic as $\mathcal{L}$-structures.
\end{thm}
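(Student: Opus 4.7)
I plan to prove the theorem by a standard back-and-forth construction. Enumerate $U_1 = \{u_n : n \in \NN\}$ and $U_2 = \{v_n : n \in \NN\}$ and build the required $\mathcal{L}$-isomorphism $\Phi \colon U_1 \to U_2$ as the union of a chain of finite partial $\mathcal{L}$-isomorphisms $f_n \colon A_n \to B_n$, adjoining $u_n$ to $A_n$ at odd stages and $v_n$ to $B_n$ at even stages. Because both $U_1$ and $U_2$ are integer-distance free, a bijection $g \colon A \to B$ between finite subsets is a partial $\mathcal{L}$-isomorphism iff $\lceil d(x, y) \rceil = \lceil d(g(x), g(y)) \rceil$ for all distinct $x, y \in A$. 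The entire argument reduces to the one-sided extension claim: given a finite partial $\mathcal{L}$-isomorphism $f \colon A \to B$ and any $a \in U_1 \setminus A$, there is some $b \in U_2 \setminus B$ with $\lceil d(a, x) \rceil = \lceil d(b, f(x)) \rceil$ for every $x \in A$.

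I would produce $b$ in two stages. \emph{Stage 1:} find an auxiliary point $c \in \mathfrak{U}$ (not yet in $U_2$) realising the correct integer-type profile over $B$, namely $d(c, f(x)) \in (n_x - 1, n_x) \setminus \ZZ$ for each $x \in A$, where $n_x := \lceil d(a, x) \rceil$. By the Kat\v{e}tov / one-point extension property of the Urysohn space, such a $c$ exists as soon as one can exhibit real numbers $(s_x)_{x \in A}$ with $s_x \in (n_x - 1, n_x)$ satisfying the Kat\v{e}tov inequalities
\[
|s_x - s_y| \leq d(f(x), f(y)) \leq s_x + s_y \qquad \text{for all } x, y \in A.
\]
\emph{Stage 2:} transfer $c$ into $U_2$. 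Each $d(c, f(x))$ is bounded away from $\ZZ$, so there is $\varepsilon > 0$ such that every point of $B_\varepsilon(c) \subseteq \mathfrak{U}$ realises the same integer-type as $c$ over $B$. By density of $U_2$ in $\mathfrak{U}$ and finiteness of $B$, pick $b \in (U_2 \cap B_\varepsilon(c)) \setminus B$; this $b$ works.

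The heart of the argument, and the main obstacle, is producing the Kat\v{e}tov map $(s_x)$ inside the open box $\prod_x (n_x - 1, n_x)$. The vector $r_x := d(a, x)$ is itself such a Kat\v{e}tov map, but only for the metric $d_A(x, y) := d(x, y)$ on $A$, not for the target metric $d_B(x, y) := d(f(x), f(y))$; the hypothesis on $f$ only ensures that $d_A(x, y)$ and $d_B(x, y)$ fall into the same open unit-integer interval $(m_{xy}, m_{xy} + 1)$ for every pair. I would deduce feasibility by a convex homotopy argument along $d_\theta := (1 - \theta) d_A + \theta d_B$, $\theta \in [0, 1]$: each $d_\theta$ is a metric of the same integer-type, and the set $\Theta \subseteq [0, 1]$ of parameters for which a Kat\v{e}tov map for $d_\theta$ exists in the open box contains $0$ (witnessed by $r$) and is open (perturb an interior witness). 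The harder half is closedness of $\Theta$: a limit of Kat\v{e}tov maps is Kat\v{e}tov but may land on the boundary of the box, so one must perturb the limit back into the interior, exploiting the strict slack provided by the non-integrality of the values of $d_{\theta_\infty}$ together with the inherited pairwise integer constraints $|n_x - n_y| \leq m_{xy} + 1$ and $n_x + n_y \geq m_{xy} + 1$ (both of which follow from the existence of $r$ in the open box). Connectedness of $[0, 1]$ then yields $1 \in \Theta$, producing the required $(s_x)$ and completing the back-and-forth.
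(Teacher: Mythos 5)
Your reduction of the back-and-forth step to a finite Kat\v{e}tov feasibility problem is exactly the right reformulation, and it matches the underlying structure of the paper's proof: the paper too produces an auxiliary point $y\in\mathfrak{U}$ by prescribing distances $d(y,y')$ and then moves into $U_2$ by density. The gap is in the ``heart of the argument'' you identify. Your homotopy scheme has genuine holes in both the openness and the closedness of $\Theta$, not just the closedness you flag. For openness, ``perturb an interior witness'' does not work as stated: a witness $s\in K(\theta_0)\cap\prod_x(n_x-1,n_x)$ is interior to the box, but it can easily sit on the boundary of the Kat\v{e}tov polytope $K(\theta_0)$ (some $|s_x-s_y|=d_{\theta_0}(x,y)$ tight), and a small change in $\theta$ can shrink $d_\theta(x,y)$ past $|s_x-s_y|$; one then has to move $s$, and it is not obvious one can do so while staying in the open box and preserving all the other constraints. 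For closedness, the limit $s^{(\infty)}$ may have several coordinates pinned at integer values simultaneously with several tight Kat\v{e}tov constraints, and pushing one coordinate off its boundary can violate a constraint that forces another coordinate across a boundary; the cascading is exactly the delicate part, and the non-integrality of $d_{\theta_\infty}(x,y)$ plus the two integer inequalities you list do not by themselves control it.

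What you would actually need is the lemma that the integer data $(n_x)_x$, $(m_{xy})_{xy}$ together with the exact reals $d_B(x,y)\in(m_{xy},m_{xy}+1)$ always admit a Kat\v{e}tov map in the open box once the pairwise integer constraints hold. The paper proves this by writing down the map explicitly: setting $\epsilon_{y'}:=\min\{d(y'',y')-(n-k)+1 : y''\in D_k,\,k<n\}$ for $y'\in D_n$ (which is positive because $f$ already respects the integer types), choosing a sufficiently small global $\epsilon$, and defining $d(y,y')$ by a two-case formula depending on whether $\epsilon_{y'}<1$ --- the case $\epsilon_{y'}<1$ being precisely the boundary situation $|n_x-n_y|=m_{xy}+1$ where the naive choice $s_x\approx n_x$ fails. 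The triangle inequalities are then checked by a four-way case analysis. This is more pedestrian than a connectedness argument but it closes the gap. As written, your proposal isolates the right target but does not prove it; either carry out the perturbation lemmas in full (accounting for cascading tight constraints) or replace the homotopy by a direct construction along the lines of the paper's, e.g., by solving the difference-constraint system $\delta_x-\delta_y\ge m_{xy}+1-d_B(x,y)$ on the pairs with $n_x-n_y=m_{xy}+1$ and showing the longest chain has total weight below $1$ via the triangle inequality for $d_B$.
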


\begin{proof}
We prove by building a back-and-forth system. Suppose $f:X\to Y$ is a finite partial bijective map witch preserves $C_n$ for all $n\in \mathbb{N}^{>0}$. Given $x_0\in U_1\setminus X$, we want to find $y_0\in U_2\setminus Y$ such that $C_n(x_0,x')$ iff $C_n(y_0,f(x'))$ for all $x'\in X$ and $n\in\mathbb{N}^{>0}$. In the following, we will define a candidate $y$ by defining its distances with all elements in $Y$ and show the triangle inequalities hold, therefore such $y$ exists in $\mathfrak{U}$. Then by density of $U_2$, we find a $y_0$ very close to $y$ that will extend the map $f$.

We first define $y$. Let $D_n=\{f(x'): C_n(x_0,x')\}$ for $n\geq 1$. For any $y'\in D_n$, let $$\epsilon_{y'}:=\min\{d(y'',y')-(n-k)+1: y''\in D_k \text{ with } k<n\}.$$ Write $\min\emptyset=\infty$. Note that $d(y'',y')>n-k-1$ for $y'\in D_n$ and $y''\in D_k$ with $k<n$. Indeed suppose $f(x'')=y''$ and $f(x')=y'$. Then $d(x'',x')\geq d(x_0,x')-d(x_0,x'')>n-1-k$ and as $f$ preserves $C_n$, we get $d(y'',y')>n-k-1$. Thus $\epsilon_{y'}>0$. 
Choose $0<\epsilon<\frac{1}{5}\min\{\epsilon_{y'}:y'\in Y\}$, $\epsilon<1$ and that $$\epsilon<\frac{1}{5}\min\{m-d(y'',y'),d(y'',y')-(m-1):y',y''\in Y\text{ and }m-1<d(y'',y')<m \text{ for some }m\}.$$ 
Note that if $y'\in D_n$ and $y''\in D_k$, then $d(y'',y')<n+k$ as $d(x'',x')\leq d(x_0,x')+d(x_0,x'')<n+k$. Thus, $d(y',y'')<n+k-5\epsilon$.

For $y'\in D_n$ define 
$$
 d(y,y'):=
\begin{cases} 
   n-1+\epsilon_{y'}-2\epsilon, & \text{ if }\epsilon_{y'}<1,\\
   n-\epsilon, & \text{ otherwise}.
   \end{cases}
$$
Note that by definition $n-1<d(y,y')<n$.

Now we prove that $Y\cup\{y\}$ satisfy the triangle inequalities. Consider the triangle $yy_1y_2$ with $y_1\in D_k$ and $y_2\in D_n$ for $n\geq k>0$. There are four cases:
\begin{enumerate}
\item
$\epsilon_{y_1}>1,\epsilon_{y_2}>1$. Then $d(y,y_1)=k-\epsilon$ and $d(y,y_2)=n-\epsilon$. By the definition of $\epsilon$ we have $d(y_1,y_2)<n+k-5\epsilon$. Hence, $$d(y,y_1)+d(y,y_2)=n+k-2\epsilon>n+k-5\epsilon> d(y_1,y_2).$$ We also need to show $d(y_1,y_2)+d(y,y_1)\geq d(y,y_2)$, i.e., $d(y_1,y_2)\geq n-k$. It holds obviously if $n=k$. If $k<n$, then by definition of $\epsilon_{y_2}$ we must have $1<\epsilon_{y_2}\leq d(y_1,y_2)-(n-k)+1$. Therefore, $d(y_1,y_2)>n-k$.
\item
$\epsilon_{y_1}<1,\epsilon_{y_2}<1$. Then $d(y,y_1)=k-1+\epsilon_{y_1}-2\epsilon$ and $d(y,y_2)=n-1+\epsilon_{y_2}-2\epsilon$. By definition, there are $y_3\in D_{k'}, y_4\in D_{n'}$ with $k'<k$ and $n'<n$ such that $d(y_1,y_3)=k-k'-1+\epsilon_{y_1}$ and $d(y_2,y_4)=n-n'-1+\epsilon_{y_2}$. Note that $d(y_3,y_4)< k'+n'-5\epsilon$. Then 
\begin{align*}
&d(y,y_1)+d(y,y_2)-d(y_1,y_2)\geq d(y,y_1)+d(y,y_2)-(d(y_1,y_3)+d(y_3,y_2))\\
&\geq d(y,y_1)+d(y,y_2)-d(y_1,y_3)-d(y_3,y_4)-d(y_4,y_2)\\
&=n+k-2+\epsilon_{y_1}+\epsilon_{y_2}-4\epsilon-(n+k-2+\epsilon_{y_1}+\epsilon_{y_2}-n'-k')-d(y_3,y_4)\\
&=n'+k'-4\epsilon-d(y_3,y_4)> n'+k'-4\epsilon-n'-k'+5\epsilon>0.
\end{align*}
Note $k'<k\leq n$. Therefore, by the definition of $\epsilon_{y_2}$ we have $d(y_2,y_3)\geq n-k'-1+\epsilon_{y_2}$.
\begin{align*}
&d(y_1,y_2)\geq d(y_2,y_3)-d(y_1,y_3)\geq n-k'-1+\epsilon_{y_2}-(k-k'-1+\epsilon_{y_1})\\
&=n+\epsilon_{y_2}-k-\epsilon_{y_1}=d(y,y_2)-d(y,y_1).
\end{align*}
For the last triangle inequality, note that if $d(y,y_2)\geq d(y,y_1)$ then it is obviously true. If not, then $k=n$ (because $\epsilon_{y_1} < 1$) and we may switch $y_1$ and $y_2$ and the same proof goes through.
\item
$\epsilon_{y_1}<1$, $\epsilon_{y_2}>1$. Then $d(y,y_1)=k-1+\epsilon_{y_1}-2\epsilon$ and $d(y,y_2)=n-\epsilon$. Note that $d(y,y_1)<d(y,y_2)$ as $\epsilon_{y_1}<1$. There is $y_3\in D_{k'}$ with $k'<k$ such that $d(y_1,y_3)=k-k'-1+\epsilon_{y_1}$. Note that $d(y_2,y_3)-(n-k')+1\geq \epsilon_{y_2}>1$. Hence, $d(y_2,y_3)>n-k'$. By the definition of $\epsilon$, we have $d(y_2,y_3)>n-k'+5\epsilon$. Also $d(y_2,y_3)< n+k'-5\epsilon$. Hence,
\begin{align*}
&d(y_1,y_2)\leq d(y_2,y_3)+d(y_3,y_1)< n+k'-5\epsilon+k-k'-1+\epsilon_{y_1}\\
&=(n-\epsilon)+(k-1+\epsilon_{y_1}-2\epsilon)-2\epsilon<d(y,y_2)+d(y,y_1).
\end{align*}
And 
\begin{align*}
&d(y_1,y_2)+d(y,y_1)\geq d(y_2,y_3)-d(y_3,y_1)+d(y,y_1)\\
&>n-k'+5\epsilon-(k-k'-1+\epsilon_{y_1})+k-1+\epsilon_{y_1}-2\epsilon\\
&=n+3\epsilon>d(y,y_2).
\end{align*}
\item
$\epsilon_{y_1}>1$, $\epsilon_{y_2}<1$. Then $d(y,y_1)=k-\epsilon$ and $d(y,y_2)=n-1+\epsilon_{y_2}-2\epsilon$. Note that we may assume $k<n$, hence $d(y,y_1)<d(y,y_2)$. Otherwise $k=n$ and by switching $y_1$ and $y_2$, we are in the previous case. Therefore, by the definition of $\epsilon_{y_2}$ we have $d(y_1,y_2)\geq n-k-1+\epsilon_{y_2}$. Also there is  $y_4\in D_{n'}$ with $n'<n$ such that $d(y_2,y_4)=n-n'-1+\epsilon_{y_2}$. Note that $d(y_1,y_4)<k+n'-5\epsilon$.
\begin{align*}
&d(y_1,y_2)\leq d(y_1,y_4)+d(y_2,y_4)<k+n'-5\epsilon+n-n'-1+\epsilon_{y_2}\\
&=(k-\epsilon)+(n-1+\epsilon_{y_2}-2\epsilon)-2\epsilon\\
&=d(y,y_1)+d(y,y_2)-2\epsilon<d(y,y_1)+d(y,y_2).
\end{align*}
And 
$d(y_1,y_2)+d(y,y_1)\geq n-k-1+\epsilon_{y_2}+k-\epsilon=n-1+\epsilon_{y_2}-\epsilon>d(y,y_2)$.
\end{enumerate}

By the property of Urysohn space, there is $y\in \mathfrak{U}$ such that $d(y,y')$ has the desired value for all $y'\in Y$. In particular $n-1<d(y,y')<n$ for all $y'\in D_n$. Now we want to find $y_0\in U_2$ that also satisfy the same property. By the definition, either $d(y,y')=n-1+\epsilon_{y'}-2\epsilon>n-1+3\epsilon$ (as $5\epsilon<\epsilon_{y'}$) or $d(y,y')=n-\epsilon$. Therefore, $n-1+\epsilon/2<d(y,y')<n-\epsilon/2$ for $y'\in D_n$. By density, there is a point $y_0\in U_2$ such that $d(y_0,y)<\epsilon/4$. Then $d(y_0,y')<d(y,y')+d(y,y_0)<n-\epsilon/4$ and $d(y_0,y')>d(y,y')-d(y,y_0)>n-1+\epsilon/4$ for all $y'\in D_n$. Now we may extend $f$ by sending $x_0$ to $y_0$ and we are done.
\end{proof}

\begin{coro}\label{cor:UrysohnRado}
The Uryshohn space is geometrically Rado on any countable integer-distance free dense set. More precisely, let $G_1=(U_1,E_1)$ and $G_2=(U_2,E_2)$ be two g.e.c.\ graphs on countable integer-distance free dense sets $U_1,U_2\subseteq \mathfrak{U}$ respectively. Then $G_1$ and $G_2$ are isomorphic.
\end{coro}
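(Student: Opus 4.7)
The plan is to extend the back-and-forth argument in the proof of \cref{thm:Ury} so that, in addition to preserving the metric classes $C_n$, the partial maps also preserve the edge relation. The extra ingredient is the g.e.c.\ property of $G_2$, which will allow us to install the correct adjacency pattern inside any open ball around the metric candidate produced in \cref{thm:Ury}.

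Concretely, I would work with finite partial bijections $f : X \to Y$ where $X \subseteq U_1$, $Y \subseteq U_2$, such that $f$ is simultaneously a graph isomorphism between $G_1 \upharpoonright X$ and $G_2 \upharpoonright Y$ and an $\mathcal{L}$-isomorphism (i.e.\ preserves each $C_n$). Given such $f$ and a new point $x_0 \in U_1 \setminus X$, I would first run the construction from the proof of \cref{thm:Ury} verbatim to produce a point $y \in \mathfrak{U}$ together with a uniform margin $\epsilon > 0$ such that, writing $D_n := \{ f(x') : C_n(x_0, x') \}$, one has $n - 1 + \epsilon < d(y, y') < n - \epsilon$ for every $y' \in D_n$ and every $n \geq 1$. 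Such $\epsilon$ exists because $Y$ is finite and each defining inequality in the Theorem 8.1 construction is strict.

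Next, I would split $D_1$ according to the graph information to be transported:
\[
A' := \{ f(x') \in D_1 : x_0 \mathrel{E_1} x' \}, \qquad B' := D_1 \setminus A'.
\]
Here I use that the unit threshold of $G_1$ forces every $E_1$-neighbour of $x_0$ to lie at distance $< 1$, hence in $f^{-1}(D_1)$, and that integer-distance-freeness of $U_1$ puts every pair strictly inside some band $(n-1,n)$. By density of $U_2$ I pick a vertex $s \in U_2$ with $d(s, y) < \epsilon/4$. Then for every $y' \in D_1$ we get $d(s, y') < 1$, so $A', B' \subseteq B_1(s)$; and for every $y' \in D_n$ with $n \geq 2$ we get $d(s, y') > 1$, so such $y'$ lies outside $B_1(s)$. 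I then apply g.e.c.\ to the centre $s$, the disjoint finite sets $A', B' \subseteq B_1(s)$, and a small radius $\delta < \epsilon/4$, obtaining $y_0 \in U_2 \cap B_{\delta}(s)$ which is $E_2$-adjacent to every element of $A'$ and to no element of $B'$.

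Finally, I would check that $f \cup \{(x_0, y_0)\}$ still belongs to the back-and-forth system. Because $d(y_0, y) < \epsilon/2$, the perturbation does not cross any band boundary, so for every $y' \in D_n$ we still have $n - 1 < d(y_0, y') < n$; this preserves each $C_n$ and also ensures $y_0 \notin Y$. For edges: the g.e.c.\ step handled $A'$ and $B'$ correctly, while for $y' \in D_n$ with $n \geq 2$ we have $d(y_0, y') > 1$, so the unit threshold of $G_2$ forces $y_0 \not\mathrel{E_2} y'$, which matches $x_0 \not\mathrel{E_1} x'$ by the unit threshold of $G_1$. The symmetric step from $U_2$ to $U_1$ is identical. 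The main thing to be careful about is just the bookkeeping: the approximation radii for $s$ and $y_0$ must be small enough relative to the margin $\epsilon$ so that neither the metric bands nor the unit-ball membership with respect to $Y$ are disturbed. All of this is routine once the metric candidate $y$ and the margin $\epsilon$ from \cref{thm:Ury} are in hand, so no genuinely new obstacle arises beyond combining the construction there with one application of g.e.c.
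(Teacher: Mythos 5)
Your proposal is correct and follows essentially the same route as the paper: extend the metric back-and-forth from \cref{thm:Ury} by a single application of g.e.c.\ to install the required adjacency pattern near the metric candidate. If anything, your argument is slightly more careful at one point — you first pick an actual vertex $s\in U_2$ near the Urysohn point $y$ and apply g.e.c.\ centered at $s$, whereas the paper applies g.e.c.\ directly around the Urysohn point $y'$ (which need not be a vertex), a step your version makes precise.
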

\begin{proof}
We build a graph isomorphism by building a back-and-forth system that preserves both the edge relation and $\{C_n:n\in\mathbb{N}^{>0}\}$. Suppose $f:X\to Y$ is such a finite map and $x_0\in U_1$. We want to extend $f$ to $X\cup\{x_0\}$. We will modify the proof of \cref{thm:Ury}. We know there is $\epsilon>0$ and $y'\in \mathcal{U}$ such that for all $y_0\in U_1\cap B_{\epsilon/2}(y')$, for all $n>0$, for all $y\in Y$ with $n-1<d(x_0,f^{-1}(y))<n$, we have $n-1<d(y_0,y)<n$. Let $Q:=\{y\in Y:x_0\mathrel{E_1}f^{-1}(y)\}$. Note that $Q\subseteq B_1(y')$ as $f^{-1}(Q)\subseteq B_1(x_0)$ and $B_1$ is interdefinable with $C_1$. By the g.e.c.\ property of $G_2$, there is $y_0\in U_1\cap B_{\epsilon/2}(y')$ such that $y_0\mathrel{E_2}y$ iff $y\in Q$ and we are done.
\end{proof}

\begin{remark}
By the proofs above we can further conclude the following. If $G$ is a g.e.c.\ graph on a countable dense integer-distance free set in the Urysohn space $\mathfrak{U}$. And let $\mathcal{L}':=\mathcal{L}_G\cup\{C_n:n\in\mathbb{N}^{>0}\}$. Then
\begin{itemize}
\item
$G$ is homogeneous as an $\mathcal{L}'$-structure;
\item
$Th_{\mathcal{L}'}(G)$ has quantifier elimination.
\end{itemize}

\end{remark}

\bibliographystyle{alpha}
\bibliography{bibiliography}

\end{document}